\tikzset{vert/.style={draw, fill=black, circle, inner sep=2pt}}
\newcommand\ab[1]{\lvert#1\rvert}
\newcommand\wt[1]{\widetilde{#1}}
\newcommand\wh[1]{\widehat{#1}}
\newcommand\inn[1]{\langle #1 \rangle}
\newcommand\R{\mathbb{R}}
\newcommand\E{\mathbb{E}}
\newcommand\cE{\mathcal{E}}
\DeclareMathOperator\pr{Pr}
\newcommand\dd{\,\mathrm{d}}
\newtheorem{theorem}{Theorem}[section]
\newtheorem{lemma}[theorem]{Lemma}
\newtheorem{proposition}[theorem]{Proposition}
\newtheorem{conjecture}[theorem]{Conjecture}
\newtheorem{assumption}[theorem]{Assumption}
\theoremstyle{remark}
\newtheorem{definition}[theorem]{Definition}
\newcommand\fixind{\leavevmode \vspace{-\baselineskip}}
\crefname{equation}{equation}{equations}
\crefname{lemma}{Lemma}{Lemmas}
\crefname{proposition}{Proposition}{Propositions}
\crefname{claim}{Claim}{Claims}
\crefname{theorem}{Theorem}{Theorems}
\crefname{conjecture}{Conjecture}{Conjectures}
\crefname{figure}{Figure}{Figures}
\crefname{assumption}{Assumption}{Assumptions}
\numberwithin{equation}{section}
\numberwithin{table}{section}
\newlist{lemenum}{enumerate}{1}
\setlist[lemenum]{label=(\alph*), ref=\thelemma(\alph*)}
\let\geq\geqslant
\let\leq\leqslant
\renewcommand{\bibnamedash}{\leavevmode\raise3pt\hbox to3em{\hrulefill}\space}
\date{Novembre 2024}
\title{Upper bounds on diagonal Ramsey numbers}
\author{Yuval Wigderson}
\address{Institute for Theoretical Studies,\\ ETH Z\"urich}
\email{yuval.wigderson@eth-its.ethz.ch}
\newcounter{savefootnote}
\newcounter{symfootnote}
\newcommand{\symfootnote}[1]{%
   \setcounter{savefootnote}{\value{footnote}}%
   \setcounter{footnote}{\value{symfootnote}}%
   \ifnum\value{footnote}>8\setcounter{footnote}{0}\fi%
   \let\oldthefootnote=\thefootnote%
   \renewcommand{\thefootnote}{\fnsymbol{footnote}}%
   \footnote{#1}%
   \let\thefootnote=\oldthefootnote%
   \setcounter{symfootnote}{\value{footnote}}%
   \setcounter{footnote}{\value{savefootnote}}%
}
\newcommand\dangerfoot{\hyperref[foot:danger]{\textsuperscript{(\textdagger)}}}
\begin{document}

\maketitle

\section{Introduction}\label{sec:intro}
Ramsey theory is a branch of combinatorics that studies order and disorder. The underlying mantra of the field, as articulated by Theodore Motzkin, is that \enquote{complete disorder is impossible}---any sufficiently large system must have a large, highly structured subsystem. The prototypical example of a Ramsey-theoretic statement is \emph{Ramsey's theorem}, from which the field derives its name. 
\begin{theorem}[\cite{MR1576401}]\label{thm:ramsey}
	For every integer $k\geq 2$, there exists some positive integer $N$ such that any two-coloring of the edges of the complete graph\footnote{Recall that the complete graph $K_N$ has $N$ vertices, and all of the $\binom N2$ possible edges are present.} $K_N$ contains a monochromatic $K_k$.
\end{theorem}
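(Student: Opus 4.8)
The plan is to prove, by induction, a quantitative strengthening that is better suited to the recursion: for all integers $s,t\geq 1$ there is a finite number $R(s,t)$ such that every red/blue coloring of $E(K_N)$ with $N\geq R(s,t)$ contains a red $K_s$ or a blue $K_t$; taking $s=t=k$ and $N=R(k,k)$ then yields \cref{thm:ramsey}. The reason for introducing the asymmetric quantity $R(s,t)$ is that it is exactly the right object on which to induct: the bare "diagonal" statement about a monochromatic $K_k$ does not visibly reduce to a smaller instance of itself, whereas the off-diagonal version does.

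First I would dispose of the base cases: $R(s,1)=R(1,t)=1$, since a single vertex vacuously forms a monochromatic clique on one vertex (and $R(s,2),R(2,t)$ are equally trivial). For the inductive step, fix $s,t\geq 2$, assume $R(s-1,t)$ and $R(s,t-1)$ are finite, and set $N=R(s-1,t)+R(s,t-1)$. Given any two-coloring of $K_N$, pick an arbitrary vertex $v$ and partition the remaining $N-1$ vertices into the set $A$ of vertices joined to $v$ by a red edge and the set $B$ of those joined to $v$ by a blue edge. Since $\ab A+\ab B=N-1=R(s-1,t)+R(s,t-1)-1$, the pigeonhole principle forces $\ab A\geq R(s-1,t)$ or $\ab B\geq R(s,t-1)$.

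In the first case, the induction hypothesis applied to the coloring restricted to $A$ yields either a blue $K_t$ (and we are done) or a red $K_{s-1}$; in the latter subcase every vertex of that red $K_{s-1}$ is joined to $v$ in red, so adding $v$ produces a red $K_s$. The second case is symmetric under swapping red/blue and $s/t$. This establishes $R(s,t)\leq R(s-1,t)+R(s,t-1)$, hence by induction that $R(s,t)$ is finite for all $s,t$; unwinding the recursion with the stated base cases even gives the explicit bound $R(s,t)\leq\binom{s+t-2}{s-1}$, so $R(k,k)\leq\binom{2k-2}{k-1}\leq 4^{k}$.

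For this classical statement there is no genuine obstacle: the only real decision is to prove the two-parameter version rather than \cref{thm:ramsey} itself, after which the argument is a one-line pigeonhole plus induction. (A slightly lossier alternative avoids the asymmetric quantity altogether: starting inside $K_N$ with $N=2^{2k-2}+1$ and iterating the step "pick a vertex, pass to the larger of its two monochromatic neighborhoods" a total of $2k-2$ times builds a nested chain of vertices together with a color recorded at each step, and by pigeonhole $k$ of these steps share a color, giving a monochromatic $K_k$. I would note this as a remark but present the Erd\H{o}s--Szekeres recursion as the main proof, since it is cleaner and yields a better bound.)
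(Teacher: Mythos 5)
Your proof is correct and is exactly the paper's approach: the paper establishes \cref{thm:ramsey} by proving the Erd\H{o}s--Szekeres bound (\cref{thm:ES binomial}) via the same off-diagonal recursion $r(s,t)\leq r(s-1,t)+r(s,t-1)$, the same neighborhood-pigeonhole step at a fixed vertex, and the same Pascal-identity unwinding to $\binom{s+t-2}{s-1}$. Your parenthetical remark about iteratively passing to the larger monochromatic neighborhood is also in the paper, as the ``algorithmic'' reformulation in Section~2.3.
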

In other words, no matter how we assign the edges of $K_N$ a color, say red or blue, we can always find $k$ vertices such that all edges between them receive the same color. That is, any such coloring, no matter how unstructured, contains a highly structured subcoloring.  Even this simple statement has some remarkable consequences. For example, \textcite{Schur1917} used \cref{thm:ramsey}\footnote{Alert readers may note that Schur's result precedes Ramsey's by more than a decade. In fact, Schur proved a closely related lemma, which one can now recognize as a consequence of \cref{thm:ramsey}, and derived his theorem from that lemma.} to prove that for all sufficiently large primes $p$, there exist non-trivial solutions to the equation $x^n + y^n \equiv z^n \pmod p$, that is, that one cannot prove Fermat's last theorem via a local-global argument.

Connections and applications to other fields of mathematics have been an important feature of Ramsey theory from the very beginning. Ramsey himself had an application in mathematical logic in mind when he proved \cref{thm:ramsey} (indeed, his paper is titled \enquote{On a problem of formal logic}). The influential paper of \textcite{MR1556929}, which helped establish Ramsey theory as a central branch of combinatorics, is titled ``A combinatorial problem in geometry''; in it, they reproved \cref{thm:ramsey} in order to deduce a result on convex polygons among sets of points in Euclidean space.

Today, Ramsey-theoretic theorems and techniques are of fundamental importance in many different fields, including additive number theory, Banach space theory, discrete geometry, ergodic theory, group theory, and theoretical computer science. These are deep and rich connections, and are difficult to adequately summarize, so we refer to the book of \textcite{MR1044995}, to the survey of \textcite{MR3497267}, and to the lecture notes of \textcite{Wigderson_lecture_notes} for more in-depth introductions to the field.

For many applications, such as those of \textcite{Schur1917} in number theory, \textcite{MR1576401} in logic, and \textcite{MR1556929} in geometry mentioned above, qualitative statements such as \cref{thm:ramsey} suffice. However, much of the modern research in Ramsey theory is concerned with \emph{quantitative} statements: how large is the integer $N$ in \cref{thm:ramsey} as a function of $k$? Formally, we make the following definition.
\begin{definition}\label{def:ramsey number}
	The \emph{Ramsey number} $r(k)$ is the least integer $N$ such that every two-coloring of the edges of $K_N$ contains a monochromatic $K_k$.
\end{definition}
Before continuing with the discussion of what is known about the function $r(k)$, let us pause and ask why we should study such quantitative questions, when qualitative statements like \cref{thm:ramsey} are elegant and already suffice for many applications. There are several answers to this question. One answer is that for certain applications, especially in fields such as theoretical computer science (e.g.\ the lower bound of \textcite{MR785629} on monotone circuit complexity), qualitative statements are not sufficient, as the application itself is quantitative. A second answer is that a better quantitative understanding of Ramsey-theoretic results can yield new insights and new proofs of existing theorems. For example, recent breakthroughs on the quantitative aspects of the Ramsey-theoretic theorem of \textcite{MR0051853}, due to \textcite{2007.03528,MR4720301} (see also the expos\'e of \textcite{MR4576028}), imply that the primes contain infinitely many three-term arithmetic progressions. This result was first proved by \textcite{MR1513216}, and is a special case of the landmark result of \textcite{MR2415379}. However, in contrast to these earlier proofs, we now know that the primes contain infinitely many three-term arithmetic progressions simply because \emph{there are many prime numbers}. That is, the quantitative improvements yielded a new proof of this theorem, using essentially no properties of the primes other than their density. Finally, and no less importantly, a third reason for studying such quantitative questions is that doing so can reveal a world of deep and beautiful mathematics.

With that said, let us turn to the quantitative aspects of \cref{thm:ramsey}, that is, to the determination of the function $r(k)$ from \cref{def:ramsey number}. The exact value of $r(k)$ is only known for $k \leq 4$, and it currently seems completely hopeless\footnote{The following famous anecdote was reported by \textcite{MR1249485}:
``Erd\H os asks us to imagine an alien force, vastly more powerful than us, landing on Earth and demanding the value of $r(5)$ or they will destroy our planet. In that case, he claims, we should marshal all our computers and all our mathematicians and attempt to find the value. But suppose, instead, that they ask for $r(6)$. In that case, he believes, we should attempt to destroy the aliens.'' Indeed, results of \textcite{MR982871} and \textcite{2409.15709} show that $r(5)$ lies takes on one of the four values $\{43,44,45,46\}$, but we remain very far from knowing the value of $r(6)$.} to obtain an exact formula for $r(k)$, so let us content ourselves with asymptotic bounds as $k \to \infty$. Essentially every proof of \cref{thm:ramsey} yields (at least implicitly) an upper bound on $r(k)$, by proving the existence of \emph{some} integer $N$. The original proof of \textcite{MR1576401} gave a bound of $r(k) \leq k!$, but Ramsey wrote \enquote{I have little doubt that [this upper bound is]
far larger than is necessary}. Indeed, a few years later, \textcite{MR1556929} proved the following stronger bound.
\begin{theorem}[\cite{MR1556929}]\label{thm:ES}
	$r(k) \leq 4^k$ for every $k \geq 2$.
\end{theorem}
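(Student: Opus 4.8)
The plan is to deduce \cref{thm:ES} from a more precise \emph{off-diagonal} statement, which is what actually powers the induction. For integers $s,t \geq 1$, let $r(s,t)$ denote the least $N$ such that every red/blue coloring of the edges of $K_N$ contains a red $K_s$ or a blue $K_t$; thus $r(k) = r(k,k)$. The claim I would prove is that
\[
  r(s,t) \leq \binom{s+t-2}{s-1} \qquad \text{for all } s,t \geq 1,
\]
by induction on $s+t$, after which \cref{thm:ES} falls out of a one-line binomial estimate.

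For the base cases, whenever $s = 1$ or $t = 1$ one has $r(s,t) = 1$: a single vertex is a (vacuously) monochromatic clique of size $1$ in either color, and correspondingly $\binom{t-1}{0} = \binom{s-1}{s-1} = 1$. For the inductive step, with $s,t \geq 2$, the crucial point is the recursion
\[
  r(s,t) \leq r(s-1,t) + r(s,t-1).
\]
To prove it, set $N = r(s-1,t) + r(s,t-1)$, take an arbitrary coloring of $K_N$, and fix a vertex $v$. The remaining $N-1$ vertices split into the set $A$ of vertices joined to $v$ by a red edge and the set $B$ of those joined by a blue edge; since $|A| + |B| = N - 1$, the pigeonhole principle gives $|A| \geq r(s-1,t)$ or $|B| \geq r(s,t-1)$. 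In the first case the coloring induced on $A$ contains a blue $K_t$ — and then we are done — or a red $K_{s-1}$, which together with $v$ forms a red $K_s$; the second case is symmetric. Feeding the induction hypothesis into the recursion and applying Pascal's identity yields
\[
  r(s,t) \leq \binom{s+t-3}{s-2} + \binom{s+t-3}{s-1} = \binom{s+t-2}{s-1},
\]
which closes the induction.

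Taking $s = t = k$ then gives $r(k) \leq \binom{2k-2}{k-1} \leq 2^{2k-2} = 4^{k-1} \leq 4^k$, using that a single binomial coefficient is bounded by the full sum $\sum_i \binom{2k-2}{i} = 2^{2k-2}$.

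I do not anticipate a real obstacle here: every ingredient is elementary, the recursion being a single-vertex pigeonhole count and the arithmetic being Pascal's rule. The one genuine idea — and the step a naive attempt is likely to miss — is that induction applied directly to the diagonal quantity $r(k,k)$ does not close, so one must \emph{strengthen the induction hypothesis} to the two-parameter off-diagonal bound; once that move is made, the rest is routine.
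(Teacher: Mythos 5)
Your proof is correct and is essentially the paper's own argument: the paper proves the same off-diagonal bound $r(k,\ell)\leq\binom{k+\ell-2}{k-1}$ by induction on $k+\ell$ via the same single-vertex pigeonhole recursion $r(k,\ell)\leq r(k-1,\ell)+r(k,\ell-1)$ and Pascal's identity, and then specializes to the diagonal. The only cosmetic differences are the choice of base case ($s=1$ or $t=1$ versus $\min\{k,\ell\}=2$) and the final binomial estimate, neither of which changes the substance.
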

For about a decade, it was believed that this bound was also far larger than is necessary, namely that $r(k)$ should grow subexponentially as a function of $k$. However, \textcite{MR0019911} dispelled this belief by proving\footnote{Lower bounds on Ramsey numbers are somewhat beyond the scope of this expos\'e, so we will not discuss the proof of \cref{thm:exp LB} in detail. However, it would be remiss not to mention that this beautiful proof is extraordinarily influential, and is the origin of the \emph{probabilistic method}, an extremely powerful technique in modern combinatorics.} an exponential lower bound.
\begin{theorem}[\cite{MR0019911}]\label{thm:exp LB}
	$r(k)\geq {\sqrt {2}}^{\,k}$ for every $k \geq 2$.
\end{theorem}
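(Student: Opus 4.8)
The plan is to use the \emph{probabilistic method}, just as Erdős did: instead of constructing by hand a two-coloring of $K_N$ with no monochromatic $K_k$, I would show that a \emph{uniformly random} two-coloring has this property with positive probability, which already establishes the claimed bound. So fix an integer $N$ and color each of the $\binom N2$ edges of $K_N$ red or blue, independently and uniformly at random. For a fixed set $S$ of $k$ vertices, let $A_S$ be the event that all $\binom k2$ edges with both endpoints in $S$ receive the same color; since there are exactly two possibilities for that common color, $\pr[A_S] = 2\cdot 2^{-\binom k2} = 2^{1-\binom k2}$.

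Next I would let $X$ be the number of monochromatic copies of $K_k$ in the random coloring, so that $X = \sum_S \mathbf 1[A_S]$, the sum running over all $\binom Nk$ choices of $S$. By linearity of expectation,
\[
	\E[X] \;=\; \sum_S \pr[A_S] \;=\; \binom Nk\, 2^{1-\binom k2}.
\]
The heart of the argument is then a first-moment (pigeonhole) observation: if $\E[X] < 1$, then, because $X$ takes nonnegative integer values, the event $\{X = 0\}$ has positive probability, so some two-coloring of $K_N$ has no monochromatic $K_k$; equivalently, $r(k) > N$.

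It remains to choose $N$ so that $\E[X] < 1$. I would take $N := \lceil \sqrt{2}^{\,k}\rceil - 1$, so that $r(k) > N$ yields $r(k) \geq \lceil \sqrt{2}^{\,k}\rceil \geq \sqrt{2}^{\,k}$, as desired. The case $k = 2$ is trivial, since then $N = 1$ and $K_1$ has no edges at all, so $X = 0$ with probability $1$; assume therefore $k \geq 3$. Bounding crudely, $\binom Nk \leq N^k/k! \leq (\sqrt{2}^{\,k})^{k}/k! = 2^{k^2/2}/k!$, and since $\binom k2 = \tfrac{k(k-1)}{2}$ we obtain
\[
	\E[X] \;\leq\; \frac{2^{k^2/2}}{k!}\cdot 2^{\,1-\frac{k(k-1)}{2}} \;=\; \frac{2^{\,1+k/2}}{k!}.
\]
A short induction finishes the job: the base case $k = 3$ reads $2^{5/2} = \sqrt{32} < 6 = 3!$, and in the inductive step the numerator grows by a factor of $\sqrt{2}$ while the denominator grows by a factor of $k+1 > \sqrt{2}$, so the ratio remains below $1$ for every $k \geq 3$.

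I do not anticipate any real obstacle: this is about the cleanest application of the probabilistic method there is, and the only genuine computation is the elementary estimate above, together with minor bookkeeping at the boundary (the floor/ceiling in the choice of $N$ and the degenerate case $k=2$). If anything, the one step worth emphasizing is the passage from $\E[X] < 1$ to the existence of a good coloring — it is entirely non-constructive, and it is precisely what makes the probabilistic method so effective here.
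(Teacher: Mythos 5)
Your proof is correct, and it is precisely the classical Erd\H os argument that the paper has in mind: the paper does not itself present a proof of this lower bound (it cites \textcite{MR0019911} and remarks in a footnote that lower bounds are beyond its scope, noting only that this is the origin of the probabilistic method), so there is no in-text proof to compare against. Your write-up is the standard first-moment computation, and the bookkeeping --- the choice $N=\lceil\sqrt2^{\,k}\rceil-1$, the estimate $\binom Nk\le N^k/k!$, and the induction showing $2^{1+k/2}/k!<1$ for $k\ge3$ together with the separate check at $k=2$ --- is all sound.
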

After this breakthrough, progress stalled for 75 years. There were a number of improvements to these bounds over the years, including important results of \textcite{MR366726,MR0905278,MR0968746,MR2552114,MR4548417}, but all of these improvements only affected the lower-order terms, and did not improve either of the exponential constants $\sqrt 2$ and $4$. This impasse finally ended with a breakthrough of \textcite{2303.09521}.
\begin{theorem}[\cite{2303.09521}]\label{thm:3.9999}
	There exists a constant $\delta>0$ such that $r(k) \leq (4-\delta)^k$ for all $k \geq 2$.
	Concretely, $r(k) \leq 3.993^k$ for all sufficiently large $k$.
\end{theorem}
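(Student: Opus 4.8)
The plan is to follow Campos, Griffiths, Morris, and Sahasrabudhe and prove the bound by induction on $k$, using an algorithm --- the \emph{book algorithm} --- that searches a given $2$-colouring of $K_N$ for a monochromatic clique. The conceptual starting point is that the factor $4$ in \cref{thm:ES} comes from the greedy clique-building procedure which, over roughly $2k$ steps, passes each time to the larger of the red or blue neighbourhood of a chosen vertex and so loses a factor of $2$ per step; one loses the \emph{full} factor of $2$ only when the chosen neighbourhood has red-density exactly $\tfrac12$, so the adversary to beat is a colouring that keeps every neighbourhood perfectly balanced. The key point, going back to Conlon's work on book Ramsey numbers, is that such balance cannot be maintained for free: a colouring that is persistently balanced contains large monochromatic \emph{books} --- a book $B(s,t)$ being a clique of size $s$ (its spine) together with $t$ further vertices all joined to the spine in one colour --- and possessing a large monochromatic book allows one to recurse with essentially no loss.

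Concretely, I would run an algorithm that maintains a red clique $R$ (the eventual red $K_k$, or the spine of a red book), a blue clique, and a large \enquote{active} set $X$ of vertices joined in red to all of $R$ and in blue to all of the blue clique, together with an auxiliary set and a parameter $p$ recording the red density inside the relevant pairs of the active set. At each step the algorithm performs one of three moves: a \emph{red step}, which selects a vertex of $X$ with many red-neighbours in $X$, adjoins it to $R$, and restricts $X$ to that red-neighbourhood; a \emph{blue step}, which does the analogous thing for the blue clique; and a \emph{density-boost step}, which is triggered when $X$ is locally blue-dense (many vertices of large blue-degree) and which, instead of merely recursing into a blue neighbourhood, passes to a subset of $X$ on which the red density $p$ has strictly increased, thereby \enquote{banking} this excess density. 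Red steps are comparatively cheap --- because of the book structure, lengthening the spine does not cost a full factor of $2$ --- and the density banked by blue and density-boost steps can later be \enquote{spent} to make red steps cheaper still or to enlarge the page-set, so that the total multiplicative loss over the entire run of the algorithm is $(4-\delta)^k$ rather than $4^k$. When the algorithm halts it has produced either a red $K_k$ directly, or a monochromatic book whose page-set is large enough that the inductive hypothesis (applied inside the pages) completes the argument.

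The heart of the proof --- and the step I expect to be the main obstacle --- is the quantitative accounting that makes the density-boost mechanism actually pay off. One must design a potential (or weight) function on the state of the algorithm, roughly of the form \enquote{size of the active set, times an exponential in $|R|$, corrected by a function of the current density $p$}, show that it evolves in a controlled way at each of the three step types, and then control the net effect of the density-boost steps: these produce a random-looking drift of $p$, and one has to rule out the possibility that this drift systematically undoes the gains from the red and blue steps. CGMS handle this by a supermartingale-type analysis that tracks the accumulated density changes and shows the genuinely \enquote{wasteful} configurations are self-limiting; it is at this stage, after optimising the various thresholds and parameters, that the numerical value of $\delta$ and the explicit bound $3.993^k$ are extracted. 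By comparison the remaining ingredients --- setting up the induction and its base cases, fixing the thresholds that separate the three step types, and checking that the book produced at the end is large enough for the recursion --- are routine.
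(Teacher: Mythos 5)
Your outline correctly identifies the machinery --- the book algorithm with red, blue, and density-boost steps, the tracking of the red density $p$, the zig-zag/potential accounting of the density-boost steps, and the final invocation of a ``large book implies large clique'' lemma. But you have misjudged which parts are routine, and the misjudgement hides a genuine gap: the algorithm exactly as you describe it, with the natural threshold of $\tfrac12$ for deciding red vs.\ blue steps and with the Erd\H os--Szekeres estimate plugged into the book-to-clique step, does \emph{not} yield any improvement. In the paper's notation one is left needing $\min\{F(x,y),G(x,y)\}<1$ for all $(x,y)\in[0,1]^2$, and this fails: the minimum attains a maximum of about $1.054$, so the ``naive'' book algorithm only proves $r(k)\lesssim 4.15^k$, \emph{worse} than Erd\H os--Szekeres.

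Two additional, non-routine ideas are required to rescue the argument, and neither appears in your proposal. First, the blue-step cutoff must be moved from $\tfrac12$ to a smaller constant $\mu$ (the paper takes $\mu=\tfrac25$); since the algorithm is fundamentally asymmetric between the colours, there is no reason for the Erd\H os--Szekeres choice of $\tfrac12$ to be optimal, and indeed lowering $\mu$ forces the density-boost parameter $\beta$ down and reduces the number $s$ of density-boost steps. Second, and more importantly, one must first prove an exponentially improved \emph{off-diagonal} bound $r(k,\ell)\leq 2^{-\frac29\ell+o(k)}\binom{k+\ell}{\ell}$ for $\ell\leq k/4$ --- itself obtained by running a variant of the book algorithm with $\mu=\ell/(k+\ell)$ and a suitable initial-density assumption --- and then feed this improved estimate, rather than \cref{thm:ES binomial}, into \cref{lem:book to clique} when analysing the diagonal case. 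Only the combination of the tuned cutoff $\mu=\tfrac25$ and the off-diagonal bootstrap pushes $\max\min\{\widehat F,G_{2/5}\}$ below $1$ and gives $r(k)\leq 3.993^k$. Your phrase ``fixing the thresholds\ldots and checking that the book produced at the end is large enough for the recursion --- are routine'' is exactly where the proof lives; as written, your proposal would halt at a bound no better than $4^k$.

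One smaller point: you describe the control of the density-boost steps as a ``supermartingale-type analysis.'' The mechanism in the paper is the deterministic zig-zag lemma (\cref{lem:zigzag}), which works by telescoping the multiplicative change in $q=p-\tfrac12$ over all steps and using that $q$ cannot exceed $\tfrac12$; there is no probabilistic drift argument. The adaptive choice $\alpha(p)\approx\varepsilon(p-\tfrac12)$ is what makes this telescoping work, and it is worth naming explicitly since it is the reason the density-boost steps are self-limiting.
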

The exact constant $3.993$ is not particularly important, and a more careful analysis of the same proof yields a slightly better bound\footnote{More recently, \textcite{2407.19026} recast the proof of \cref{thm:3.9999} in a different language, which allowed them to optimize the technique and obtain a much stronger bound of $r(k) \leq 3.8^k$ for sufficiently large $k$.}. The important thing about this result is that it is the first result, after almost 90 years of intense study, to break the barrier of $4^k$. 

The new tool introduced by \textcite{2303.09521} is the so-called \emph{book algorithm}, an elementary but ingenious technique for finding monochromatic \emph{book graphs} in colorings of $K_N$. As we will shortly discuss, a book graph is a basic graph-theoretic object, whose study turns out to be closely connected to the study of Ramsey numbers. Every known proof of \cref{thm:ramsey} uses, implicitly or explicitly, monochromatic book graphs. 

As we will see, the proof of \textcite{2303.09521} is fairly ad hoc, and relies on the verification of certain complicated numerical inequalities. More recently, however, a new, more conceptual proof of \cref{thm:3.9999}\footnote{Moreover, \textcite{2410.17197} were able to prove a more general theorem, which gives a new upper bound on Ramsey numbers in any number of colors. For simplicity, however, we remain with the two-color version of the problem throughout this expos\'e.} was found by \textcite{2410.17197}. They introduced a modification of the book algorithm, but their crucial new input is a purely geometric lemma, concerning the correlations of probability distributions in high-dimensional Euclidean space. While the proof of \textcite{2303.09521} and \textcite{2410.17197} have many features in common, they differ in key ways, and we will sketch both proofs.

The rest of this expos\'e is dedicated to discussing these two proofs of \cref{thm:3.9999}, and is organized as follows. We begin in \cref{sec:background} with a proof of \cref{thm:ES}, in the course of which we introduce book graphs as well as several of the key ideas that go into the proof of \cref{thm:3.9999}. In \cref{sec:book alg}, we introduce and analyze the book algorithm of \textcite{2303.09521}, and will then \emph{fail} to prove \cref{thm:3.9999}. Luckily, we will rescue the argument and complete the proof in \cref{sec:rescue} by introducing two additional ingredients. In \cref{sec:symmetric alg} we introduce and analyze the symmetric book algorithm of \textcite{2410.17197}, and use it to give another proof of \cref{thm:3.9999}. The key new lemma introduced by \textcite{2410.17197}, and its geometric proof, are discussed in \cref{sec:geometry}. We end in \cref{sec:epilogue} with an epilogue, discussing the use of book graphs in the original proof of \textcite{MR1576401} of \cref{thm:ramsey}, as well as how our understanding of book graphs and Ramsey theory has developed over the subsequent 95 years.

\paragraph*{Acknowledgments}
An early version of this expos\'e was written for the lecture notes of a Ramsey theory course that I taught at ETH in Spring 2024; I am grateful to all of the students in the course for their interest and insights. I would also like to thank Nicolas Bourbaki, Marcelo Campos, Xiaoyu He, Zach Hunter, Eoin Hurley, Greg Kuperberg, Vivian Kuperberg, and Wojciech Samotij for many helpful discussions and comments on earlier drafts. I am supported by Dr.\ Max R\"{o}ssler, the Walter Haefner Foundation, and the ETH Z\"{u}rich Foundation.

\section{The Erd\H os--Szekeres theorem and algorithm}\label{sec:background}
In this section, we prove \cref{thm:ES} (and thus \cref{thm:ramsey}). This proof is elegant and interesting in its own right, and additionally it contains within it several of the important ideas used in the proof of \cref{thm:3.9999}. We will actually see three different proofs (or, more precisely, three different ways of viewing the same proof) of \cref{thm:ES}, in each of the next three subsections. Each proof will help introduce some of the key ideas that go into the proof of \cref{thm:3.9999}.
\subsection{Off-diagonal Ramsey numbers}
We begin with the original proof of \textcite{MR1556929}. 
Before proceeding with the proof, we generalize the notion of Ramsey numbers from \cref{def:ramsey number}.
Here and throughout, we denote by $V(K_N)$ and $E(K_N)$ the vertex set and edge set, respectively, of the complete graph $K_N$.
\begin{definition}\label{def:off diagonal}
	Given integers $k,\ell\geq 2$, the \emph{off-diagonal Ramsey number} $r(k,\ell)$ is the least integer $N$ such that every two-coloring of $E(K_N)$ with colors red and blue contains a red $K_k$ or a blue $K_\ell$.
\end{definition}
Note that $r(k,\ell)=r(\ell,k)$ as the colors play symmetric roles, and that $r(k) = r(k,k)$. The quantity $r(k)$ is often called the \emph{diagonal Ramsey number}.

With this terminology, we can prove \cref{thm:ES}. In fact, we will prove the following more precise result.
\begin{theorem}[\cite{MR1556929}]\label{thm:ES binomial}
	For all integers $k,\ell\geq 2$, we have
	\[
		r(k,\ell) \leq \binom{k+\ell-2}{k-1}.
	\]
	In particular,
	\[
		r(k) \leq \binom{2k-2}{k-1} < 4^k.
	\]
\end{theorem}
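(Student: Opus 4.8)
The plan is to prove the upper bound $r(k,\ell) \le \binom{k+\ell-2}{k-1}$ by a double induction on $k$ and $\ell$, using the classical recursive inequality $r(k,\ell) \le r(k-1,\ell) + r(k,\ell-1)$. First I would dispose of the base cases: when $k = 2$, any two-coloring of $K_N$ with $N = \ell$ either contains a red edge (hence a red $K_2$) or else is entirely blue (hence contains a blue $K_\ell$), so $r(2,\ell) \le \ell = \binom{\ell}{1}$; symmetrically $r(k,2) \le k$. These match the claimed binomial coefficient, so the base of the induction holds.

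For the inductive step, suppose $k,\ell \ge 3$ and the bound holds for all smaller values (in the sense that $r(k-1,\ell)$ and $r(k,\ell-1)$ are already controlled). Set $N = r(k-1,\ell) + r(k,\ell-1)$ and consider any red/blue coloring of $E(K_N)$. Fix an arbitrary vertex $v$; it has $N-1 = r(k-1,\ell) + r(k,\ell-1) - 1$ neighbors, partitioned into the set $R$ of red-neighbors and the set $B$ of blue-neighbors. By pigeonhole, either $|R| \ge r(k-1,\ell)$ or $|B| \ge r(k,\ell-1)$. In the first case, the coloring restricted to $R$ contains either a red $K_{k-1}$ — which together with $v$ forms a red $K_k$, since $v$ is joined in red to all of $R$ — or a blue $K_\ell$, and we are done either way. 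The second case is symmetric. This establishes $r(k,\ell) \le r(k-1,\ell) + r(k,\ell-1)$, and then Pascal's identity
\[
	\binom{k+\ell-2}{k-1} = \binom{k+\ell-3}{k-2} + \binom{k+\ell-3}{k-1}
\]
together with the inductive hypothesis gives $r(k,\ell) \le \binom{k+\ell-2}{k-1}$.

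For the diagonal consequence, specialize to $k = \ell$ to get $r(k) = r(k,k) \le \binom{2k-2}{k-1}$. The strict inequality $\binom{2k-2}{k-1} < 4^k$ follows since $\binom{2k-2}{k-1} \le \sum_{j=0}^{2k-2} \binom{2k-2}{j} = 2^{2k-2} = 4^{k-1} < 4^k$; in fact the central binomial coefficient $\binom{2m}{m}$ is bounded by $4^m$, and here $m = k-1$. There is essentially no obstacle in this argument — the only point requiring a moment's care is bookkeeping the base cases so that the induction and Pascal's identity align cleanly, and making sure the pigeonhole split is stated with the correct (strict versus non-strict) inequalities so that $N$ is chosen just large enough.
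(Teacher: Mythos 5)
Your proof is correct and follows essentially the same route as the paper: the recursion $r(k,\ell)\le r(k-1,\ell)+r(k,\ell-1)$ established by a pigeonhole argument at a single vertex, closed by Pascal's identity. The only cosmetic difference is that the paper phrases the pigeonhole step as a proof by contradiction, whereas you state it directly, and you additionally spell out the elementary bound $\binom{2k-2}{k-1}\le 2^{2k-2}<4^k$, which the paper leaves implicit.
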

\begin{proof}
	We proceed by induction on $k+\ell$, with the base case $\min\{k,\ell\}=2$ being trivial. For the inductive step, the key claim is that the following inequality holds:
	\begin{equation}\label{eq:ES recursion}
		r(k,\ell) \leq r(k-1,\ell) + r(k,\ell-1).
	\end{equation}
	To prove \eqref{eq:ES recursion}, fix a red/blue coloring of $E(K_N)$, where $N = r(k-1,\ell)+r(k,\ell-1)$, and fix some vertex $v \in V(K_N)$. Suppose for the moment that $v$ is incident to at least $r(k-1,\ell)$ red edges, and let $R$ denote the set of endpoints of these red edges. By definition, as $\ab R \geq r(k-1,\ell)$, we know that $R$ contains a red $K_{k-1}$ or a blue $K_\ell$. In the latter case we have found a blue $K_\ell$ (so we are done), and in the former case we can add $v$ to this red $K_{k-1}$ to obtain a red $K_k$ (and we are again done).

	So we may assume that $v$ is incident to fewer than $r(k-1,\ell)$ red edges. By the exact same argument, just interchanging the roles of the colors, we may assume that $v$ is incident to fewer than $r(k,\ell-1)$ blue edges. But then the total number of edges incident to $v$ is at most
	\[
		(r(k-1,\ell)-1) + (r(k,\ell-1)-1) = N-2,
	\]
	which is impossible, as $v$ is adjacent to all $N-1$ other vertices. This is a contradiction, proving \eqref{eq:ES recursion}.

	We can now complete the induction. By \eqref{eq:ES recursion} and the inductive hypothesis, we find that
	\begin{align*}
		r(k,\ell)&\leq r(k-1,\ell) + r(k,\ell-1) \\
		&\leq \binom{(k-1)+\ell-2}{(k-1)-1} + \binom{k+(\ell-1)-2}{k-1} \\
		&= \binom{k+\ell-2}{k-1},
	\end{align*}
	where the final equality is Pascal's identity for binomial coefficients.
\end{proof}

\subsection{Enter the book}\label{sec:enter book}
\begin{definition}
	Let $t,m$ be positive integers. 
	The \emph{book graph} $B_{t,m}$ consists of a copy of $K_t$, plus $m$ additional vertices which are adjacent to all vertices of the $K_t$, but not adjacent to one another. Equivalently, $B_{t,m}$ is obtained from the complete bipartite graph $K_{t,m}$ by adding in all the $\binom t2$ possible edges in the side of size $t$. Equivalently, $B_{t,m}$ consists of $m$ copies of $K_{t+1}$ which are glued along a common $K_t$.
\end{definition}
Note that two important special cases are $m=1$, where $B_{t,1}$ is simply the complete graph $K_{t+1}$, and $t=1$, where $B_{1,m}$ is simply the star graph $K_{1,m}$, consisting of one vertex joined to $m$ others (and no other edges).
The ``book'' terminology comes from the case $t=2$, in which case $B_{2,m}$ consists of $m$ triangles sharing an edge, which looks, to some extent, like a book with $m$ triangular pages. Continuing this analogy, the $K_t$ in $B_{t,m}$ is called the \emph{spine}, and the $m$ additional vertices of $B_{t,m}$ are called the \emph{pages}. We will often denote a book as a pair of sets $(A,Y)$, where $A$ is the spine and $Y$ comprises the pages.

The reason book graphs are important in the study of Ramsey numbers comes down to the following simple observation.

\begin{lemma}\label{lem:book to clique}
	Suppose that a two-coloring of $E(K_N)$ contains a monochromatic red copy of $B_{t,m}$, where $m \geq r(k-t, \ell)$. Then this coloring contains a red $K_k$ or a blue $K_\ell$.
\end{lemma}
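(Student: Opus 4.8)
The plan is to exploit the defining structure of a monochromatic book: its spine is a monochromatic clique, its pages are all joined to the spine in the same color, and so any monochromatic clique found \emph{among the pages} can be glued onto the spine. Concretely, write the red copy of $B_{t,m}$ as a pair $(A,Y)$, where $A$ is a red $K_t$ (the spine) and $Y$ is a set of $m$ pages, each of which is joined to every vertex of $A$ by a red edge. The only information I will use about the coloring of the rest of $K_N$ is its restriction to the edges inside $Y$.

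First I would invoke the hypothesis $\ab Y = m \geq r(k-t,\ell)$ together with \cref{def:off diagonal}: the two-coloring restricted to $E(K_{\ab Y})$ (viewing $Y$ as spanning a complete graph) must contain a red $K_{k-t}$ or a blue $K_\ell$. In the latter case we have already found a blue $K_\ell$ and are done. In the former case, let $B \subseteq Y$ be the vertex set of this red $K_{k-t}$. I then claim that $A \cup B$ spans a red $K_k$: the edges within $A$ are red because $A$ is the (red) spine; the edges within $B$ are red by the choice of $B$; and the edges between $A$ and $B$ are red because $B \subseteq Y$ and every page is joined to all of $A$ by red edges. Since $A$ and $B$ are disjoint (pages are not in the spine) and $\ab{A} + \ab{B} = t + (k-t) = k$, this is a red $K_k$, completing the proof.

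There is no real obstacle here — this is the elementary observation that underpins the link between book graphs and Ramsey numbers, and the entire content is the bookkeeping of which edges are guaranteed red. The one point worth stating carefully is that all three relevant edge-classes (inside the spine, inside the found clique, between the two) are red, so that the union genuinely forms a monochromatic clique rather than merely a union of two red cliques; once that is spelled out, the reduction from $r(k,\ell)$ to finding a large red book with a slightly shorter target clique is immediate, and it is precisely this reduction that motivates the book-algorithm strategy developed in the later sections.
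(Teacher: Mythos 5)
Your proof is correct and follows exactly the paper's argument: apply the definition of $r(k-t,\ell)$ to the set $Y$ of pages, and in the red case glue the spine $A$ onto the red $K_{k-t}$ found inside $Y$. The only difference is that you spell out the three edge-classes explicitly, which the paper leaves implicit.
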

\begin{proof}
	Let $A$ be the spine of the book, and let $Y$ be its pages. By assumption, $\ab Y = m \geq r(k-t,\ell)$, so $Y$ contains a blue $K_\ell$ or a red $K_{k-t}$. In the former case we are done, and in the latter case, we may add $A$ to the red $K_{k-t}$ to obtain a red $K_k$.
\end{proof}
This proof should look familiar---we have already encountered the same idea in the proof of \cref{thm:ES binomial}, where we implicitly used the $t=1$ case of \cref{lem:book to clique}. Indeed, in that proof, we showed that if a coloring contains a red star with $r(k-1,\ell)$ leaves, then it contains a red $K_k$ or a blue $K_\ell$. The only new idea in \cref{lem:book to clique} is that we don't need to consider a single vertex (i.e.\ the case $t=1$), but may take an arbitrary book. 

Although the idea of \cref{lem:book to clique} basically goes back to the work of \textcite{MR1556929}, it was first formulated in essentially this language by \textcite{MR679211}, who used \cref{lem:book to clique} to propose a natural approach to improving the upper bounds on $r(k)$. Namely, if one can show that every two-coloring of $E(K_N)$ contains a monochromatic $B_{t,m}$, for some appropriate parameters $t$ and $m\geq r(k-t,k)$, then one can plug this into \cref{lem:book to clique} and conclude that $r(k) \leq N$. Again, this is essentially the approach we used in the proof of \cref{thm:ES binomial}, where a simple argument based on the pigeonhole principle showed that any coloring of $E(K_N)$ contains a large monochromatic star, that is, a monochromatic book with many pages and a spine of size $t=1$. The idea behind Thomason's program is that perhaps for larger values of $t$, more sophisticated arguments than the pigeonhole principle could yield stronger results, and improve the upper bounds on $r(k)$.

Thomason's idea has been quite successful. The three prior asymptotic improvements to \cref{thm:ES binomial}, due to \textcite{MR0968746,MR2552114,MR4548417}, all used this idea, roughly showing that if some two-coloring of $E(K_N)$ does \emph{not} contain a monochromatic $B_{t,m}$ (for some fixed $t,m$), then its structure must be such that the proof of \cref{thm:ES binomial} can be made more efficient. A more precise structural result along these lines is given by \textcite{MR4482092}. However, for fundamental technical reasons, none of these techniques seems capable of finding books with spine larger than $t=O(\log k)$, whereas in order to prove a result like \cref{thm:3.9999} in this way, one would want to take (say) $t=\frac{k}{1000}$. This was where the matter stood for a few years, until the work of \textcite{2303.09521}.

\subsection{The Erd\H os--Szekeres algorithm}\label{sec:ES algorithm}

One of the many new ingredients introduced by \textcite{2303.09521} is the following simple idea: rather than searching for some \emph{specific} book $B_{t,m}$, they define an exploration algorithm for finding \emph{some} book, and then prove that regardless of which book is found, the parameters involved are good enough to plug into \cref{lem:book to clique}. Although this idea is almost a triviality, this change of perspective is crucial for the proof of \cref{thm:3.9999}.

Before we define this exploration algorithm---which they termed the \emph{book algorithm}---let us first rephrase the proof of \cref{thm:ES} as an exploration algorithm, the \emph{Erd\H os--Szekeres algorithm}. 
Let us fix a two-coloring of $E(K_N)$. We assume that this coloring has no monochromatic $K_k$, and our goal is to eventually obtain a contradiction if $N$ is sufficiently large. For the moment we only seek to get a contradiction if $N > 4^k$, and thus reprove \cref{thm:ES}.

For a vertex $v \in V(K_N)$, we write $N_R(v)$ for the \emph{red neighborhood} of $v$, that is, the set of vertices $w \in V(K_N)$ such that the edge $vw$ is colored red. Similarly, $N_B(v)$ denotes the \emph{blue neighborhood} of $v$.

In the Erd\H os--Szekeres algorithm, we maintain three disjoint sets $A,B,X\subseteq V(K_N)$; the sets $A$ and $B$ will grow throughout the process, whereas $X$ will shrink. The key property we maintain is that $(A,X)$ is a red book, and $(B,X)$ is a blue book; that is, $A$ is completely red, $B$ is completely blue, all edges between $A$ and $X$ are red, and all edges between $B$ and $X$ are blue. To initialize the process, we set $A=B=\varnothing$ and $X=V(K_N)$. We now repeatedly run the following steps.

\begin{algorithm}{Erd\H os--Szekeres algorithm}{ES}
\fixind
\begin{enumerate}
	
	\item If $\ab X \leq 1$, $\ab A\geq k$, or $\ab B \geq k$, stop the process. \label{it:check sizes}
	\item Pick a vertex $v \in X$, and check whether $v$ has at least $\frac 12(\ab X-1)$ red neighbors in $X$. 
	
	\item  If yes, move $v$ to $A$ and shrink $X$ to the red neighborhood of $v$. That is, update $A \to A\cup \{v\}$ and $X \to X \cap N_R(v)$, and keep $B$ the same. Call this a \emph{red step}.
	\item If not, then $v$ has at least $\frac 12 (\ab X-1)$ blue neighbors in $X$. We now move $v$ to $B$, and shrink $X$ to the blue neighborhood of $v$. That is, we update $B \to B\cup \{v\}$ and $X \to X \cap N_B(v)$, and keep $A$ the same. Call this a \emph{blue step}.
	\item Return to step \ref{it:check sizes}.
\end{enumerate}
\end{algorithm}
By the way we update the sets, we certainly maintain the key property that $(A,X)$ and $(B,X)$ are red and blue books, respectively, throughout the entire process, since every time we add a vertex $v$ to $A$ (resp.\ $B$), we shrink $X$ to the red (resp.\ blue) neighborhood of $v$. 

Using \cref{alg:ES}, we can give an alternative proof of \cref{thm:ES}.
\begin{proof}[``Algorithmic'' proof of \cref{thm:ES}]
	Let $N = 4^k$, and fix a two-coloring of $E(K_N)$. Assume for contradiction that this coloring contains no monochromatic $K_k$. We now run \cref{alg:ES} until it terminates.

	Suppose first that the algorithm terminated because $\ab A \geq k$. Throughout the process, we maintain the property that all edges inside $A$ are red. Therefore, if $\ab A\geq k$ at the end of the process, we have found a monochromatic red $K_k$, a contradiction. Similarly, if $\ab B \geq k$ at the end of the process, we have found a blue $K_k$, another contradiction. We may thus assume that at the end of the process, we have $\ab A<k$ and $\ab B <k$.

	Therefore, the process can only end when $\ab X \leq 1$. The key observation now is that at every step of the process, we have
	\begin{equation}\label{eq:X size ES}
		\ab X \geq 2^{-\ab A - \ab B} N.
	\end{equation}
	Indeed, this certainly holds when the process begins, for then we have $\ab A = \ab B=0$ and $\ab X = N$. We can now check that it holds by induction: every time we do a red step, we increase $\ab A$ by $1$, and decrease $\ab X$ to at least\symfootnote{\label{foot:danger}Strictly speaking, we should write here $\frac 12 (\ab X-1)$, although the claimed bound \eqref{eq:X size ES} can also be proved inductively by judicious use of ceiling signs. However, from now on, we will start ignoring such additive $\pm 1$ terms. Of course they need to be carefully dealt with to obtain a correct proof, but they will always contribute a negligible error, which we will ignore. 
	We will add the symbol \hyperref[foot:danger]{(\textdagger)} to mark the places where we omit such additive errors.} $\frac 12 \ab X$, thus preserving the validity of \eqref{eq:X size ES}. Similarly, in a blue step, we increase $\ab B$ by $1$ and decrease $\ab X$ to at least $\frac 12 \ab X$, again preserving \eqref{eq:X size ES}. By induction, we conclude that \eqref{eq:X size ES} also holds at the end of the process.

	At the end of the process, we thus have
	\[
		N \leq 2^{\ab A + \ab B} \ab X < 2^{k+k}\cdot 1 = 4^k,
	\]
	where we plug in our upper bounds $\ab A<k, \ab B <k, \ab X \leq 1$. This contradiction completes the proof. 
\end{proof}
It is worth noting that, as presented, this argument only proves \cref{thm:ES}---that is, the bound $r(k) \leq 4^k$---rather than the sharper estimate given in \cref{thm:ES binomial}. It is an interesting and instructive exercise to figure out how to modify \cref{alg:ES} to obtain the stronger bound proved in \cref{thm:ES binomial} via a similar ``algorithmic'' proof.

For future reference (and as a hint to solving the exercise above), it is good to observe that the off-diagonal Erd\H os--Szekeres bound $r(k,\ell) \leq \binom{k+\ell}\ell$ can also be obtained in this way. To do so, set $\gamma = \frac{\ell}{k+\ell}$. Then we can modify the Erd\H os--Szekeres algorithm as follows.

\begin{algorithm}{Off-diagonal Erd\H os--Szekeres algorithm}{off diag ES}
	\fixind
	\begin{enumerate}
		
		\item If $\ab X \leq 1$, $\ab A\geq k$, or $\ab B \geq \ell$, stop the process. \label{it:check sizes OD}
		\item Pick a vertex $v \in X$, and check whether $v$ has at least $(1-\gamma)\ab X$ red neighbors in $X$. 
		
		\item  If yes, move $v$ to $A$ and shrink $X$ to the red neighborhood of $v$. That is, update $A \to A\cup \{v\}$ and $X \to X \cap N_R(v)$, and keep $B$ the same. Call this a \emph{red step}.
		\item If not, then $v$ has at least\dangerfoot{}
		$\gamma \ab X$ blue neighbors in $X$. We now move $v$ to $B$, and shrink $X$ to the blue neighborhood of $v$. That is, we update $B \to B\cup \{v\}$ and $X \to X \cap N_B(v)$, and keep $A$ the same. Call this a \emph{blue step}.
		\item Return to step \ref{it:check sizes OD}.
	\end{enumerate}
\end{algorithm} 
The point now is that we obtain the red $K_k$ or blue $K_\ell$ if $\ab A \geq k$ or $\ab B \geq \ell$, and thus we may assume that we do fewer than $k$ red steps and fewer than $\ell$ blue steps. $X$ shrinks by a factor of $1-\gamma$ at every red step, and by a factor of $\gamma$ at every blue step, so throughout the process we have
\[
	\ab X \geq (1-\gamma)^{\ab A} \gamma^{\ab B} N.
\]
On the other hand, the process only terminates if $\ab X \leq 1$, so this implies $N < (1-\gamma)^{-k} \gamma^{-\ell}$. One can check, by Stirling's approximation, that 
\[
	\binom{k+\ell}{\ell} = 2^{o(k)} \gamma^{-\ell} (1-\gamma)^{-k}
\]
for all $\ell \leq k$, and hence this gives a contradiction if we choose $N$  of the form $2^{o(k)}\binom{k+\ell}{\ell}$. This recovers \cref{thm:ES binomial} up to the subexponential error term.

\section{The book algorithm}\label{sec:book alg}
We are now ready to describe the book algorithm of \textcite{2303.09521}. As before, we fix a two-coloring of $E(K_N)$, and assume that there is no monochromatic $K_k$; our goal is to obtain a contradiction if $N$ is sufficiently large.
Throughout the process, we maintain four disjoint sets $A,B,X,Y$, with the following properties: $(A,X)$ is a red book, $(B,X)$ is a blue book, and $(A,Y)$ is another red book\footnote{Equivalently, we could say that $(B,X)$ is a blue book and $(A,X\cup Y)$ is a red book.}. Thus, the only difference from the Erd\H os--Szekeres algorithm is the presence of the new set $Y$. At the end of the process, our goal is to output the pair $(A,Y)$, and to prove that $t \coloneqq \ab A$ and $m \coloneqq \ab Y$ satisfy $m \geq r(k-t,k)$, so that we can apply \cref{lem:book to clique} to obtain a contradiction. We initialize the process with $A = B = \varnothing$, and $X \sqcup Y$ an arbitrary partition of $V(K_N)$ with\dangerfoot{} $\ab X = \ab Y$. By permuting the colors if necessary, we may assume that at the beginning of the process, at least half the edges between $X$ and $Y$ are red.

\begin{center}
	\begin{tikzpicture}
		\draw[fill=red, very thick] (0,0) circle[radius=.5] node[] {$A$};
		\draw[fill=blue!60!ProcessBlue, very thick] (0,-4) circle[radius=.5] node[] {$B$};
		\draw[very thick, fill=white] (-2,-2) ellipse[x radius=1, y radius=.5] node {$X$};
		\scoped[on background layer] \fill[red] (-3,-2) -- (0,0) -- (-1.2,-2) -- cycle;
		\scoped[on background layer] \fill[blue!60!ProcessBlue] (-3,-2) -- (0,-4) -- (-1.2,-2) -- cycle;
		\draw[very thick, fill=white] (2,-2) ellipse[x radius=1, y radius=.5] node {$Y$};
		\scoped[on background layer] \fill[red] (3,-2) -- (0,0) -- (1.2,-2) -- cycle;
	\end{tikzpicture}
\end{center}
As in the Erd\H os--Szekeres algorithm, we will iteratively build this picture by moving vertices from $X$ to $A$ or $B$, and then shrinking $X$ and $Y$. A move from $X$ to $A$ will be called a red step, and a move from $X$ to $B$ will be called a blue step.

What is the advantage of maintaining such a picture? Recall that in the Erd\H os--Szekeres algorithm, $\ab X$ shrinks by a factor of two whenever we do a red or a blue step, hence we end up with $\ab X \geq 2^{-\ab A - \ab B}N$ as in \eqref{eq:X size ES}, yielding the bound $r(k) < 4^k$. However, it is reasonable to hope that since we are imposing ``half as many constraints'' on $Y$ as on $X$---that is, we are only maintaining that the edges between $A$ and $Y$ are red, and not that any edges incident to $Y$ are blue---we may be able to obtain better control on $\ab Y$. Indeed, we might hope that every blue step does not shrink $Y$ at all, while every red step shrinks $Y$ by only a factor of two, as before, yielding\footnote{If we could really obtain such strong control on $\ab Y$, we would show that $r(k) \lesssim 2^k$, a dramatic improvement over \cref{thm:ES}. Unfortunately, and unsurprisingly, the devil is in the details, and a lot of work is needed to make such an approach work, and the extra complications yield a substantially weaker bound.} a bound of $\ab Y \gtrsim 2^{-\ab A}N$. 

In other words, our goal will be to ``sacrifice'' the vertices in $X$, and use them as the fuel we use to build the large red book $(A,Y)$.
This approach comes with a fundamental asymmetry between the colors, in marked contrast to the Erd\H os--Szekeres proof. We will really insist on finding a \emph{red} book $(A,Y)$, and will do our best to build it. Only when doing so is really impossible will we take blue steps.

Because of this, our preferred move would be taking a red step. That is, we would like to pick a vertex $v \in X$, move $v$ to $A$, and update $X \to X \cap N_R(v)$. Moreover, since we need to maintain that $(A,Y)$ is a red book, we will also need to update $Y \to Y \cap N_R(v)$. In particular, when deciding whether to add a vertex $v \in X$ to $A$, we need to check not only that $v$ has many red neighbors in $X$---so that $X$ doesn't shrink too much---but also that $v$ has many red neighbors in $Y$, so that $Y$ doesn't shrink too much. In particular, we see that in addition to tracking the sizes of $A,B,X$, and $Y$, we will also need to track a fifth parameter, the red edge density between $X$ and $Y$. We denote this density by
\[
	p \coloneqq d_R(X,Y) = \frac{e_R(X,Y)}{\ab X \ab Y},
\]
where $e_R(X,Y)$ denotes the number of red edges with one endpoint in $X$ and the other in $Y$.
Recall that, by assumption, we have $p \geq \frac 12$ at the beginning of the process.
Note that every time we add a vertex to $A$ or to $B$ (and thus have to shrink $X$ and potentially $Y$), this red density $p$ might change.
For our simplified exposition of the proof of \cref{thm:3.9999}, we will make the following (completely unjustified) assumption.
\begin{assumption}\label{ass:degree regular}
	At every step of the process, every vertex in $X$ has exactly $p \ab Y$ red neighbors in $Y$, and every vertex in $Y$ has exactly $p \ab X$ red neighbors in $X$.
	In other words, the bipartite graph of red edges between $X$ and $Y$ is bi-regular.
\end{assumption}
We stress again that $X,Y,$ and $p$ change throughout the process, but \cref{ass:degree regular} asserts that whenever such a change happens, we magically end up back with the same bi-regularity.

While \cref{ass:degree regular} is clearly a bogus assumption, it is actually possible to (essentially) make it rigorous. Indeed, the definition of $p$ implies that the vertices in $X$ have, on average, $p \ab Y$ red neighbors in $Y$. A basic but important observation, used frequently in extremal combinatorics, is that one can often convert such average degree conditions to minimum or maximum degree conditions, by deleting a few ``outlier'' vertices. In the rigorous proof of \cref{thm:3.9999}, one must repeatedly ``clean'' $X$ by removing such outliers, and thus one can indeed maintain an approximate version of \cref{ass:degree regular}, at least ensuring that all vertices in $X$ have roughly the same red degree\footnote{It is \emph{much} harder to ensure degree-regularity in both $X$ and $Y$ simultaneously. Luckily, it turns out that degree-regularity in $Y$ is substantially less important in the argument, and in the formal proof one doesn't even ensure an approximate version of it. In its place, one uses a judicious choice of the vertex $v$.}. However, for our exposition, we ignore these important technicalities, and stick with \cref{ass:degree regular}.

\subsection{The steps of the book algorithm}
The two basic steps in the book algorithm will again be red steps and blue steps, as in the Erd\H os--Szekeres algorithm. Note that when we perform a blue step (moving $v \in X$ to~$B$ and updating $X \to X \cap N_B(v)$), we do not need to update~$Y$ at all, since these changes do not affect the fact that $(A,Y)$ is a red book. In particular, thanks to \cref{ass:degree regular}, the red density between~$X$ and~$Y$ remains unchanged during a blue step, since all the remaining vertices in~$X$ still have exactly $p \ab Y$ red neighbors in~$Y$. However, as discussed above, red steps \emph{can} affect $p$, since in a red step we update $X \to X \cap N_R(v)$ and $Y \to Y \cap N_R(v)$, and thus our value of $p$ is updated to
\[
	p' \coloneqq d_R(X \cap N_R(v), Y \cap N_R(v)).
\]
Let us call a vertex \emph{prosperous} if $p' \geq p-\alpha$, for some parameter $\alpha$ we will shortly choose. We will then perform a red step only if there is a vertex $v \in X$ which is prosperous, and which has at least $\frac 12\ab X$ red neighbors in $X$. In such a step, we increase $\ab A$ by $1$, decrease $\ab X$ by a factor of $2$, decrease $Y$ by a factor of $p$ (since $v$ has $p \ab Y$ red neighbors in $Y$, by \cref{ass:degree regular}), and update $p$ to at least $p-\alpha$.

In \cref{alg:ES}, we were always able to do either a red or a blue step, since every vertex in $X$ has at least $\frac 12\ab X$ neighbors in $X$ in one of the colors\dangerfoot. However, if we require that our red vertex $v$ be prosperous, then we may be in a position where neither a red nor a blue step is possible. Namely, we get stuck if all vertices in $X$ have at least $\frac 12 \ab X$ red neighbors in $X$, but none of them is prosperous.

In this case, we implement a \emph{density-boost step}, which is one of the other main innovations of \textcite{2303.09521}. Pick a vertex $v \in X$, and consider the following picture.
\begin{center}
	\begin{tikzpicture}
		\draw[thick] (0,0) ellipse[x radius=1.5, y radius=3];
		\draw[thick] (6,0) ellipse[x radius=1.5, y radius=3];
		\draw[red, thick, fill=white] (6.2,-.3) node[black] (Y) {$U$} ellipse[x radius=1, y radius=1.3] ;
		\node at (0,-3.5) {$X$};
		\node at (6,-3.5) {$Y$};
		\node[vert, label=above:$v$] (v) at (-1.2,0) {};
		\draw[red, thick, fill=white] (.3,-1.3) node[black] (R) {$T$} ellipse[x radius=.8, y radius=1.1] ;
		\draw[blue, thick, fill=white] (.3,1.3) node[black] (B) {$S$} ellipse[x radius=.8, y radius=1.1] ;
		\begin{scope}[on background layer]
			\fill[red] ($(R)+(0,.5)$) -- (-1.2,0) -- ($(R)+(0,-1)$) -- cycle;
			\fill[blue] ($(B)+(0,-.5)$) -- (-1.2,0) -- ($(B)+(0,1)$) -- cycle;
			\draw[dashed, red, ultra thick] ($(R)+(0,.8)$) -- ($(Y)+(0,1)$);
			\draw[dashed, red, ultra thick] ($(R)+(0,-.8)$) -- ($(Y)+(0,-1)$);
		\end{scope}
		\node[rotate=10, align=center] at (3,-.8) {red density\\$< p-\alpha$};
	\end{tikzpicture}
\end{center}
Since $v$ is not prosperous, the red edge density between $T\coloneqq N_R(v) \cap X$ and $U \coloneqq N_R(v) \cap Y$ must be less than $p-\alpha$. However, by \cref{ass:degree regular}, every vertex in $U$ has $p\ab X$ red neighbors in $X$. Therefore, setting $S \coloneqq N_B(v) \cap X$, we find that\dangerfoot
\[
	p \ab X \ab U = e_R(X,U) = e_R(T,U) + e_R(S,U) < (p-\alpha) \ab T \ab U + e_R(S,U).
\]
Rearranging, we conclude that
\[
	e_R(S,U) > \ab U \left( p \ab X - (p-\alpha) \ab T \right).
\]
Let $\beta \coloneqq \frac{\ab S}{\ab X}$, so that $\beta$ records what fraction of the edges from $v$ to the rest of $X$ are blue. Then $\ab S = \beta \ab X$ and\dangerfoot{} $\ab T = (1-\beta) \ab X$, and the above can be rewritten as
\[
	e_R(S,U) > \ab U \ab S \left( \frac p\beta - \frac{(p-\alpha)(1-\beta)}{\beta} \right) = \ab S \ab U \left( p+ \alpha \frac{1-\beta}{\beta} \right),
\]
which implies
\begin{equation}\label{eq:density-boost density}
	d_R(S,U) > p + \alpha \frac{1-\beta}{\beta}.
\end{equation}
Note too that since we cannot do a blue step, we must have $\beta \leq \frac 12$, implying that $d_R(S,U)>p+\alpha$.
In other words, in the bad situation where we cannot perform a red or a blue step, we \emph{can} perform a \emph{density-boost step}, where we replace $X$ by $S=N_B(v) \cap X$, replace $Y$ by $U=N_R(v) \cap Y$, and thus boost the density from $p$ to at least $p+\alpha \frac{1-\beta}{\beta} \geq p+\alpha$.

Note that density-boost steps are expensive, in that they shrink $X$ and $Y$, but don't actually make progress by increasing $\ab A$ or $\ab B$. In particular, we don't \emph{a priori} have any control on how many density-boost steps we perform. Luckily, there is a simple fix to this problem: since we are in any case updating $X \to X \cap N_B(v)$ in a density-boost step, we may add $v$ to $B$ for free, while maintaining the property that $(B,X)$ is a blue book. That is, a density-boost step can also be made a type of blue step, and thus we necessarily perform at most $k$ density-boost steps without creating a blue $K_k$.

The final piece we need before formally defining the book algorithm is to choose $\alpha$, which determines the threshold above which a vertex is considered prosperous. Note that every red step may decrease $p$ by $\alpha$, so if we end up doing up to $k$ red steps, we may decrease $p$ from its initial value of $\frac 12$ to $\frac 12 - \alpha k$. Moreover, whenever we do a red step, we also shrink $Y$ by a factor of (the current value of) $p$. In particular, if $p$ ever drops below (say) $\frac 14$, we are in big trouble: then $Y$ shrinks by a factor of $4$ at every step, and we have no real hope of proving a bound stronger than $r(k) \leq 4^k$. As such, we want to pick $\alpha \leq \frac{\varepsilon}{k}$, so that even after doing $k$ red steps, we have not meaningfully decreased $p$ below its initial value. Here, one can think of $\varepsilon$ as a tiny absolute constant, although in the final analysis we will actually pick $\varepsilon$ to tend to $0$ slowly with $k$.

Unfortunately, there is a trade-off. Recall that we have very little control over the effect of the density-boost steps, because these are the steps we do as a last resort. In fact, essentially our only way of bounding their total effect is the observation that $p \leq 1$ throughout the entire process, which should imply that we cannot do too many density-boost steps, as that would drive the red density up too high. The problem is that a density-boost step only {increases} $p$ by roughly $\alpha$, so if we pick $\alpha\leq \frac{\varepsilon}{k}$, then even if we do $k$ density-boost steps (the maximum possible number), we will only increase the density by $\varepsilon$. In particular, we have no hope of reaching the threshold of $p=1$, where we finally gain some control over the density-boost steps.

The way to resolve this apparent contradiction is to pick $\alpha$ adaptively. Indeed, suppose that at some point in the process, we have reached a red density of, say, $p =0.51$. At this point, it doesn't make sense to have the cutoff be $\alpha=\frac{\varepsilon}{k}$; we wouldn't even mind losing an absolute constant of $1/100$ in the density, since that will only bring us back to our original value of $p$! So we will instead pick $\alpha$ to be dependent on our current value of $p$; namely, we set
\begin{equation}\label{eq:alpha def}
	\alpha(p) \coloneqq
	\begin{cases}
		\frac{\varepsilon}{k} & \text{if } p \leq \frac 12+\frac 1 k,\\
		\varepsilon(p-\frac 12)&\text{otherwise.}
	\end{cases}
\end{equation}
Again, the point of this is that, if we are at some step of the process where $p>\frac 12$, then we can afford to lose more in the density without every dropping $p$ into the ``danger zone'' of being substantially smaller than $\frac 12$. The advantage of this is that the amount we \emph{win} in a density-boost step is itself proportional to $\alpha=\alpha(p)$. So if we have already done some number of density-boost steps, such that $p>\frac 12$, each subsequent density-boost boosts the density even further, at an exponential rate, thus rapidly bringing us closer to the threshold $p=1$.

\subsection{Formal definition of the book algorithm}
With all of these preliminaries, we are finally able to define the book algorithm.
\begin{algorithm}{Book algorithm}{book v1}
	\fixind
\begin{enumerate}
	\item If $\ab X \leq 1$, $\ab A\geq k$, or $\ab B \geq k$, stop the process. \label{it:check sizes book}
	\item Let $p=d_R(X,Y)$ be the current red density between $X$ and $Y$. Define $\alpha=\alpha(p)$ as in \eqref{eq:alpha def}, where $\varepsilon$ is some fixed parameter throughout the process.
	
	\item Check whether some vertex $v \in X$ has at least $\frac 12 \ab X$ blue neighbors in $X$. If yes, perform a \emph{blue step}, by updating
	\[
		A \to A,\qquad B \to B \cup \{v\}, \qquad X \to X\cap N_B(v), \qquad Y \to Y,
	\]
	and return to step \ref{it:check sizes book}.\label{it:blue step book}
	
	\item Check whether some vertex $v \in X$ is {prosperous}, meaning that $d_R(N_R(v) \cap X, N_R(v) \cap Y)\geq p-\alpha$. If yes, perform a \emph{red step}, by updating
	\[
		A \to A \cup \{v\},\qquad B \to B, \qquad X \to X\cap N_R(v), \qquad Y \to Y \cap N_R(v),
	\]
	and return to step \ref{it:check sizes book}.

	\item In the remaining case, pick some vertex $v \in X$. It is not prosperous, and has $\beta\ab X$ blue neighbors in $X$, for some $\beta \leq \frac 12$. We now perform a \emph{density-boost step}, by updating
	\[
		A \to A,\qquad B \to B \cup \{v\}, \qquad X \to X\cap N_B(v), \qquad Y \to Y \cap N_R(v),
	\]
	and return to step \ref{it:check sizes book}.
\end{enumerate}
\end{algorithm}

For future reference, the following table records how the key parameters change during the execution of the book algorithm, following the discussion above. 
\begin{table}[hb]
	\begin{center}
		\tabulinesep=1.2mm
		\begin{tabu}{|c||c|c|c|c|c|}
			\hline
			&$\ab A$ & $\ab B$ & $\ab X$ & $\ab Y$ & $p$ \\ \hhline{|=#*{4}{=|}=|}
			blue step & -- & $+1$ & $\times \frac 12$ & -- & -- \\ \hline
			red step & $+1$ & -- & $\times \frac 12$ & $\times p$ & $-\alpha$ \\ \hline
			density-boost step & -- & $+1$ & $\times \beta$ & $\times p$ & $+ \alpha \frac{1-\beta}{\beta}$
			\\ \hline
		\end{tabu}
	\end{center}
	\caption{How the various parameters evolve during \cref{alg:book v1}. Dashes denote quantities that are unchanged. In general, the entries in the table are \emph{lower bounds}, e.g.\ a density-boost step may increase $p$ by more than $\alpha \frac{1-\beta}{\beta}$, and a red step may shrink $X$ to more than half of its previous size.
	}
	\label{table:book v1}
\end{table}

\subsection{Analysis of the book algorithm}
Suppose that, when the book algorithm ends, we have done $t$ red steps, $s$ density-boost steps, and $b$ blue steps. We may assume that $t <k$ and that $s+b<k$, since otherwise we have found a monochromatic $K_k$. We now collect a number of estimates on the various parameters associated with the process.
\begin{lemma}\label{lem:p never drops}
	We have $p \geq \frac 12 - \varepsilon$ throughout the entire process.
\end{lemma}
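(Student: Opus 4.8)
The plan is to track the evolution of $p = d_R(X,Y)$ through the process and to exploit the fact, visible in \cref{table:book v1}, that the \emph{only} steps which decrease $p$ are the red steps, each of which lowers $p$ by exactly $\alpha(p)$; blue steps leave $p$ unchanged and density-boost steps can only increase it. Since there are $t < k$ red steps in total, the lemma is really just a bookkeeping statement about the adaptive choice of $\alpha$ in \eqref{eq:alpha def}.

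First I would isolate the two relevant features of the function $\alpha$. When $p \leq \frac12 + \frac1k$ we have $\alpha(p) \leq \frac\varepsilon k$, so a red step taken from such a value of $p$ decreases $p$ by at most $\frac\varepsilon k$. When $p > \frac12 + \frac1k$ we have $\alpha(p) = \varepsilon(p - \frac12)$, so a red step replaces $p$ by $p - \varepsilon(p-\tfrac12) = \tfrac12 + (1-\varepsilon)(p-\tfrac12) \geq \tfrac12$ (using $\varepsilon \leq 1$); in particular such a step never pushes $p$ below $\frac12$.

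Now introduce the deficit $D \coloneqq \max\{0, \tfrac12 - p\}$, which equals $0$ at the start of the process since $p \geq \frac12$ initially. Blue and density-boost steps do not increase $D$. A red step increases $D$ by at most $\frac\varepsilon k$: if $p \leq \frac12+\frac1k$ beforehand then $p$ drops by at most $\frac\varepsilon k$, while if $p > \frac12 + \frac1k$ beforehand then, as just noted, $p$ stays $\geq \frac12$ and so $D$ stays $0$. As $D$ starts at $0$, only ever grows at red steps, and grows by at most $\frac\varepsilon k$ at each of the fewer than $k$ red steps, we obtain $D \leq k \cdot \frac\varepsilon k = \varepsilon$ at every moment of the process, which is exactly the assertion $p \geq \frac12 - \varepsilon$ throughout.

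I do not foresee a genuine obstacle: this is precisely the sanity check that \eqref{eq:alpha def} was engineered to pass. The only subtlety worth spelling out is why the per-step increments of $D$ accumulate without interference from the (uncontrolled number of) density-boost steps — but since $D$ is non-increasing between consecutive red steps, its running maximum is bounded above by the sum of its increments, and the count $t < k$ of red steps then finishes the argument.
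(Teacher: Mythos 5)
Your proof is correct and takes essentially the same route as the paper: observe that only red steps decrease $p$, use the two cases in the definition of $\alpha(p)$ to show each red step drops $p$ by at most $\varepsilon/k$ once $p$ is near $\tfrac12$ (and cannot push $p$ below $\tfrac12$ when $p > \tfrac12 + \tfrac1k$), and multiply by the bound $t < k$ on the number of red steps. Your deficit $D = \max\{0, \tfrac12 - p\}$ is a slightly more careful bookkeeping device for the same estimate that the paper states more compactly.
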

\begin{proof}
	As discussed above, every blue step keeps $p$ constant (by \cref{ass:degree regular}), every density-boost step can only increase $p$, and every red step decreases $p$ by at most $\alpha(p)$. Additionally, the choice of $\alpha(p)$ shows that $p-\alpha(p)\geq \frac 12$ whenever $p > \frac 12+\frac 1k$, whereas $\alpha(p)=\frac\varepsilon k$ whenever $p\leq \frac 12 +\frac 1k$. Since we do $t\leq k$ red steps, $p$ can never drop below $\frac 12 - t\cdot\frac\varepsilon k\geq \frac 12 -\varepsilon$.
\end{proof}
It will now be convenient to pick $\varepsilon=k^{-1/4}$, although we note that this choice is not particularly important; many functions of $k$ which tend to $0$ neither too slowly or too quickly would work.
\begin{lemma}\label{lem:Y size}
	At the end of the process, we have $\ab Y \geq 2^{-t-s-o(k)}N$.
\end{lemma}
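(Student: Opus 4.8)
The plan is simply to track how $\ab Y$ evolves over the execution of \cref{alg:book v1} using \cref{table:book v1}. According to that table, $\ab Y$ is unchanged during every blue step, and is multiplied by the current value of $p$ during every red step and every density-boost step. Since the process begins with $\ab Y = N/2$ (recall $X\sqcup Y$ is a partition of $V(K_N)$ with $\ab X=\ab Y$) and we perform exactly $t$ red steps and $s$ density-boost steps, at the end of the process we have
\[
	\ab Y = \frac N2 \prod_{i=1}^{t+s} p_i,
\]
where $p_i$ denotes the red density $d_R(X,Y)$ at the moment the $i$-th of these $t+s$ shrinking steps is performed.

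The second step is to bound each factor from below. By \cref{lem:p never drops} we have $p_i \geq \frac12-\varepsilon$ for every $i$, so
\[
	\ab Y \geq \frac N2\Bigl(\tfrac12-\varepsilon\Bigr)^{t+s} = 2^{-(t+s)-1}\,(1-2\varepsilon)^{t+s}\,N.
\]
It then remains to check that the remaining factors are subexponential in $k$. Since $t < k$ and $s+b<k$, we have $t+s < 2k$, hence $(1-2\varepsilon)^{t+s} \geq (1-2\varepsilon)^{2k}$; with the choice $\varepsilon = k^{-1/4}$ this is $\exp\bigl(2k\log(1-2k^{-1/4})\bigr) \geq \exp(-Ck^{3/4}) = 2^{-o(k)}$ for an absolute constant $C$, and of course $2^{-1}=2^{-o(k)}$ as well. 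Combining the displays gives $\ab Y \geq 2^{-t-s-o(k)}N$, as claimed.

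I do not expect any genuine obstacle here: the entire quantitative content has already been isolated in \cref{lem:p never drops}, which guarantees that $p$ never enters the ``danger zone'' far below $\frac12$, so that the cumulative shrinkage of $Y$ over all $t+s$ steps is only $2^{-(t+s)}$ up to a subexponential correction. The one thing to be careful about is bookkeeping of the omitted additive roundoff terms (the factor $2^{-1}$ coming from $\ab Y=N/2$, ceilings in the cleaning steps that enforce an approximate version of \cref{ass:degree regular}, etc.); each of these contributes at most a $2^{o(k)}$ factor and is harmlessly absorbed into the $2^{-o(k)}$ in the statement.
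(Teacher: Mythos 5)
Your proof is correct and follows essentially the same route as the paper: track the multiplicative shrinkage of $\ab Y$ using \cref{table:book v1}, invoke \cref{lem:p never drops} to lower-bound each factor of $p$ by $\frac12-\varepsilon$, and absorb the $(1-2\varepsilon)^{t+s}$ correction into the $2^{-o(k)}$ term using $\varepsilon=k^{-1/4}$. The only difference is that you spell out the subexponential bookkeeping slightly more explicitly than the paper does, which is fine.
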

\begin{proof}
	$Y$ is unchanged by every blue step. On the other hand, during each red or density-boost step, we decrease $Y$ by a factor of $p$, by \cref{ass:degree regular}. By \cref{lem:p never drops}, we have that $p \geq \frac 12-\varepsilon$ at every such step, hence
	\[
		\ab Y \geq \left( \frac 12 - \varepsilon \right)^{t+s} \cdot \frac N2 = 2^{-t-s-o(k)}N,
	\]
	where we plug in our choice of $\varepsilon$ and recall that we start the process with\dangerfoot{} $\ab Y=\frac N2$.
\end{proof}
We next turn to bounding $\ab X$ at the end of the process. Just as in the Erd\H os--Szekeres algorithm, the main point of this is to estimate how many steps we do, since we recall that the process terminates when $\ab X \leq 1$.

Recall that at each density-boost step, we shrink $X$ by a factor of $\beta$, where $\beta$ is defined as the fraction $\ab{N_B(v) \cap X}/\ab X$ of blue neighbors of the currently chosen vertex~$v$. Let $\beta_1,\dots,\beta_s$ be the sequence of values of $\beta$ for each of the $s$ blue steps. Let $\beta$ be the harmonic mean of $\beta_1,\dots,\beta_s$, that is, define $\beta$ by
\[
	\frac 1 \beta = \frac 1s \sum_{i=1}^s \frac{1}{\beta_i}.
\]
\begin{lemma}\label{lem:X size}
	At the end of the process, we have
	\[
		\ab X \geq 2^{-t-b-o(k)} \beta^s N.
	\]
\end{lemma}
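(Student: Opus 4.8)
The plan is to treat \cref{lem:X size} as a bookkeeping statement: track the multiplicative factor by which $\ab X$ shrinks at each of the $t+s+b$ steps of \cref{alg:book v1}, using the values recorded in Table~\ref{table:book v1}, and then convert the product of the density-boost shrink factors into a power of their harmonic mean.

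First I would recall that, up to the additive rounding errors flagged with \dangerfoot{}, the process begins with $\ab X = \frac N2$, since $X \sqcup Y$ is a partition of $V(K_N)$ into two equal halves. Next I would record how $\ab X$ evolves. Every blue step shrinks $\ab X$ by a factor of at least $\frac 12$, because the vertex $v$ that we move to $B$ has at least $\frac 12 \ab X$ blue neighbors in $X$, and we update $X \to X \cap N_B(v)$. Every red step also shrinks $\ab X$ by a factor of at least $\frac 12$: we only reach the red-step branch of \cref{alg:book v1} once the blue-step check in step \ref{it:blue step book} has failed, so \emph{every} vertex of $X$ — in particular the prosperous vertex $v$ that we move to $A$ — has fewer than $\frac 12 \ab X$ blue neighbors, hence at least $\frac 12 \ab X$ red neighbors, in $X$. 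Finally, the $i$-th density-boost step shrinks $\ab X$ by exactly a factor $\beta_i \leq \frac 12$, directly from the definition of $\beta_i$. Multiplying these factors over the $b$ blue, $t$ red, and $s$ density-boost steps gives
\[
	\ab X \geq \frac N2 \cdot 2^{-b} \cdot 2^{-t} \cdot \prod_{i=1}^s \beta_i,
\]
where the accumulated $\pm 1$ errors contribute only a further $2^{-o(k)}$ factor, since there are $O(k)$ steps in total.

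The only remaining point is to replace $\prod_{i=1}^s \beta_i$ by $\beta^s$, and this is exactly the inequality between the geometric and harmonic means: $\bigl(\prod_{i=1}^s \beta_i\bigr)^{1/s} \geq \bigl(\frac 1s \sum_{i=1}^s \beta_i^{-1}\bigr)^{-1} = \beta$, so that $\prod_{i=1}^s \beta_i \geq \beta^s$. Plugging this in and absorbing the constant $\frac 12 = 2^{-o(k)}$ yields $\ab X \geq 2^{-t-b-o(k)}\beta^s N$, as claimed.

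I do not expect any real obstacle here: the statement is pure bookkeeping, and the only non-trivial ingredient is the elementary GM--HM inequality, which is introduced precisely so that the quantities $\beta_1,\dots,\beta_s$ — over which we have no individual control — enter the final estimate only through the single aggregate parameter $\beta$, to be bounded later. The one spot that warrants a moment's care is verifying that a red step shrinks $\ab X$ by at most a factor of $2$, which is why it matters that in \cref{alg:book v1} the blue-step test precedes the red-step test.
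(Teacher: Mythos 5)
Your argument is correct and is essentially the same as the paper's: you track $\ab X$ step by step, getting a factor of $\frac 12$ per red or blue step and $\beta_i$ per density-boost step, and then convert $\prod_i \beta_i$ into $\beta^s$ using the AM--GM inequality applied to the reciprocals $\beta_i^{-1}$ (which is the same thing as the GM--HM inequality you cite). Your observation that the ordering of the blue- and red-step tests is what makes red steps shrink $X$ by at most a factor of $2$ is exactly right, and is left implicit in the paper.
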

\begin{proof}
	Every red or blue step shrinks $X$ by at most a factor\dangerfoot{} of $2$, hence the factor of $2^{-t-b}$. On the other hand, the $i$th density-boost step decreases $\ab X$ by a factor of $\beta_i$. The inequality of arithmetic and geometric means implies that
	\[
		\frac 1 \beta = \frac 1 s \sum_{i=1}^s \frac{1}{\beta_i} \geq \left( \prod_{i=1}^s \frac{1}{\beta_i} \right)^{1/s},
	\]
	hence the contribution of the density-boost steps is
	\[
		\prod_{i=1}^s \beta_i \geq \beta^s.
	\]
	Together with the fact that we begin the process with\dangerfoot{} $\ab X=\frac N2$, this yields the claimed bound.
\end{proof}
The final, and perhaps most important, result we need is an estimate on the number of density-boost steps. As discussed above, we can get a good estimate on this quantity because of the ``dynamic'' choice of $\alpha$; this is the content of the next lemma, which is called the \emph{zig-zag lemma} by \textcite{2303.09521}.
\begin{lemma}[Zig-zag lemma]\label{lem:zigzag}
	We have
	\begin{equation*}
		\sum_{i=1}^s \frac{1-\beta_i}{\beta_i}\leq t + o(k).
	\end{equation*}
\end{lemma}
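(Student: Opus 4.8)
The plan is to track the red density $p$ through the run of \cref{alg:book v1} and to extract the bound from the way the dynamic choice of $\alpha$ interacts with the a priori constraint $p\leq 1$. Define the budget function
\[
	G(p)\coloneqq\int_{1/2}^{p}\frac{\dd u}{\alpha(u)},\qquad \alpha(u)=\varepsilon\max\!\Big(u-\tfrac12,\tfrac1k\Big),
\]
which is strictly increasing and concave on $[\tfrac12-\varepsilon,1]$ (as $1/\alpha$ is positive and non-increasing there), and which satisfies
\[
	G(1)=\frac{1/k}{\varepsilon/k}+\frac1\varepsilon\ln\frac{1/2}{1/k}=\frac{1+\ln(k/2)}{\varepsilon}=o(k)
\]
by the choice $\varepsilon=k^{-1/4}$. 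The virtue of $G$ is that it converts the multiplicative evolution of $p-\tfrac12$ forced by $\alpha(p)=\varepsilon(p-\tfrac12)$ above the floor into additive bookkeeping.

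I would then record how $G$ changes at each step, using \cref{table:book v1} and \cref{lem:p never drops}. A blue step fixes $p$, hence $G$. A red step sends $p$ to some $p'\geq p-\alpha(p)$; since $\alpha$ is non-decreasing and $p'\geq p-\alpha(p)$ forces $\alpha(p')\geq(1-\varepsilon)\alpha(p)$, we get $G(p)-G(p')\leq\frac{p-p'}{\alpha(p')}\leq\frac{\alpha(p)}{\alpha(p')}\leq 1+O(\varepsilon)$. A density-boost step sends $p$ to some $p'\geq p+\alpha(p)\frac{1-\beta_i}{\beta_i}$ by \eqref{eq:density-boost density}; provided it is \emph{mild}, i.e.\ $\frac{1-\beta_i}{\beta_i}=o(1/\varepsilon)$, the value of $\alpha$ grows by only a $1+o(1)$ factor across the jump, so by monotonicity and concavity of $G$,
\[
	G(p')-G(p)\ \geq\ G\!\Big(p+\alpha(p)\tfrac{1-\beta_i}{\beta_i}\Big)-G(p)\ \geq\ (1-o(1))\,\frac{1-\beta_i}{\beta_i}.
\]
Summing $\Delta G$ over all steps telescopes to $G(p_{\mathrm{final}})-G(p_{\mathrm{initial}})\leq G(1)-G(\tfrac12)=o(k)$, using $p_{\mathrm{initial}}\geq\tfrac12$ and $p_{\mathrm{final}}\leq 1$. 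Separating the density-boost steps into mild and wild ones, this yields
\[
	(1-o(1))\!\!\sum_{i\ \mathrm{mild}}\frac{1-\beta_i}{\beta_i}\ \leq\ \sum_i\big(G(p_i')-G(p_i)\big)\ \leq\ o(k)+(1+O(\varepsilon))\,t,
\]
so $\sum_{i\ \mathrm{mild}}\frac{1-\beta_i}{\beta_i}\leq t+o(k)$ since $t<k$.

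The remaining point---which I expect to be the crux---is to control the \emph{wild} density-boost steps, those with $\frac{1-\beta_i}{\beta_i}$ comparable to $1/\varepsilon$ or larger, i.e.\ the steps that collapse $\ab X$ and send $p-\tfrac12$ up by a large multiplicative factor at one go. Here the lever is that \eqref{eq:density-boost density} combined with $p'\leq 1$ forces $\frac{1-\beta_i}{\beta_i}<\frac{1-p_i}{\alpha(p_i)}$, so each wild step eats a definite share of the finite budget $G(1)=o(k)$ and drives $p$ near $1$; since $p$ never drops below $\tfrac12-\varepsilon$ (\cref{lem:p never drops}) and each red step multiplies $p-\tfrac12$ by a factor at least $1-\varepsilon$, many red steps must intervene before the next wild step is possible. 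Combining these observations with the judicious choice of the density-boost vertex $v$ alluded to after \cref{ass:degree regular}---which keeps $\beta_i$ from being pathologically small---should give $\sum_{i\ \mathrm{wild}}\frac{1-\beta_i}{\beta_i}=o(k)$ and finish the proof. The ubiquitous $\pm1$ rounding and cleaning errors are, as usual, absorbed into the $o(k)$.
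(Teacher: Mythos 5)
Your potential function $G(p)=\int_{1/2}^{p}\dd u/\alpha(u)$ is, up to an affine change of variable, the same bookkeeping the paper does multiplicatively with $q=p-\tfrac12$: for $p>\tfrac12+\tfrac1k$ one computes $G(p)=\tfrac1\varepsilon\bigl(1+\ln(kq)\bigr)$, so summing increments of $G$ is just multiplying the ratios $q'/q$ of \eqref{eq:q change} and taking logarithms. Your per-step estimates for red steps ($\Delta G\geq -(1+O(\varepsilon))$) and for mild density-boost steps ($\Delta G\geq(1-o(1))\tfrac{1-\beta_i}{\beta_i}$ when $\varepsilon\tfrac{1-\beta_i}{\beta_i}=o(1)$) are correct and mirror \eqref{eq:q change}--\eqref{eq:ratio of q}; the only cosmetic gain is that $G$ handles the two regimes of \eqref{eq:alpha def} uniformly rather than in separate cases.

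The gap is exactly the one the paper's own footnote flags: for wild steps, where $\tfrac{1-\beta_i}{\beta_i}\gtrsim 1/\varepsilon$, the telescoping of $G$ only gains $\tfrac1\varepsilon\ln\bigl(1+\varepsilon\tfrac{1-\beta_i}{\beta_i}\bigr)$, which is logarithmically smaller than $\tfrac{1-\beta_i}{\beta_i}$, so $G$ cannot by itself account for their contribution. Your sketch of how to close this gap does not hold up as written. A single wild step multiplies $p-\tfrac12$ by $1+\varepsilon\tfrac{1-\beta_i}{\beta_i}$, so if $p-\tfrac12$ is small (say $\Theta(1/k)$) it stays small; it does not ``drive $p$ near $1$''. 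Nothing in \cref{alg:book v1} forbids consecutive density-boost steps, so red steps need not ``intervene''. And the footnote after \cref{ass:degree regular} that you invoke is about controlling $\ab Y$ in the absence of full degree-regularity in $Y$, not about a lower bound on $\beta_i$. What your budget calculation honestly yields is that the \emph{number} of wild steps is $O(\varepsilon k)$ (each costs $\Theta(1/\varepsilon)$ out of a total budget $G(1)+t(1+O(\varepsilon))=O(k)$); but since the only a priori upper bound is $\tfrac{1-\beta_i}{\beta_i}\leq\tfrac{1-p}{\alpha(p)}\leq k/\varepsilon$, a single wild step could in principle contribute $\Theta(k^{5/4})$ to the sum, and the count alone does not control $\sum_{\text{wild}}\tfrac{1-\beta_i}{\beta_i}$. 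The rigorous treatment in Campos--Griffiths--Morris--Sahasrabudhe requires a genuinely separate argument for the very-small-$\beta_i$ steps, which is the ingredient missing from your proposal.
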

We won't give a full proof of \cref{lem:zigzag}, but the following sketch captures the main ideas.
\begin{proof}[Proof sketch for \cref{lem:zigzag}]
	For the moment, let us assume that we stay in the regime $p \geq \frac 12 + \frac 1k$. It will be more convenient to reparametrize $p$, by defining $q \coloneqq p-\frac 12$. By our choice of $\alpha$ in \eqref{eq:alpha def}, we have that $\alpha(p)=\varepsilon q$.

	Suppose we do one step of the book algorithm, and thus update $p$ to some new value $p'$ (and update $q$ to $q'=p'-\frac 12$). If the step we do is a blue step, then by \cref{ass:degree regular}, the density $p$ does not change, hence $p'=p$ and $q'=q$. If, instead, we do a red step, then $v$ is prosperous, and hence $p' \geq p - \alpha(p)$. This implies that $q' \geq q - \alpha(p) = q-\varepsilon q = (1-\varepsilon)q$. Finally, if this step is the $i$th density-boost step, then by \eqref{eq:density-boost density} we have that
	\[
		p' \geq p + \alpha(p) \frac{1-\beta_i}{\beta_i}
	\]
	and thus
	\[
		q'\geq q + \alpha(p) \frac{1-\beta_i}{\beta_i} = q \left( 1+\varepsilon \frac{1-\beta_i}{\beta_i} \right).
	\]
	Putting this all together, we conclude that at each step of the algorithm, we have
	\begin{equation}\label{eq:q change}
		\frac{q'}q \geq \begin{cases}
			1&\text{when we do a blue step,}\\
			1-\varepsilon&\text{when we do a red step,}\\
			1+\varepsilon \frac{1-\beta_i}{\beta_i} &\text{when we do the $i$th density-boost step.}
		\end{cases}
	\end{equation}
	Let $q_{\text{final}}$ denote the value of~$q$ at the end of the algorithm, and let $q_{\text{initial}}$ be the value of~$q$ at the beginning of the algorithm. 
	Multiplying \eqref{eq:q change} over all steps of the algorithm, we find that
	\[
		\frac{q_{\text{final}}}{q_{\text{initial}}}\geq (1-\varepsilon)^t \prod_{i=1}^s \left( 1+\varepsilon \frac{1-\beta_i}{\beta_i} \right),
	\]
	since we get a contribution of $1-\varepsilon$ from each of the $t$ red steps and a contribution of $1+\varepsilon \frac{1-\beta_i}{\beta_i}$ from the $i$th density-boost step. Combining this inequality with the approximation $1+x \approx e^x$, an approximation that is valid for sufficiently small\footnote{This approximation can be made rigorous, but we're still cheating in this derivation of \eqref{eq:ratio of q}. We have no guarantee that $\varepsilon \frac{1-\beta_{i}}{\beta_{i}}$ is small, since we have no control over $\beta_i$, and thus no guarantee that the approximation is valid. A correct proof of this lemma would need to separate out the contribution from the steps where $\beta_i$ is very small, and thus where such an approximation is not accurate.} $\ab x$, we find that
	\begin{equation}
		\frac{q_{\text{final}}}{q_{\text{initial}}}\gtrsim e^{-\varepsilon t} \exp \left( \varepsilon \sum_{i=1}^s \frac{1-\beta_i}{\beta_i} \right) = \exp \left( \varepsilon \left( -t + \sum_{i=1}^s \frac{1-\beta_i}{\beta_i} \right) \right).
		\label{eq:ratio of q}
	\end{equation}
	We have that $q_{\text{final}}\leq \frac 12$, since $p \leq 1$ throughout the whole process. On the other hand, since we are assuming that $p \geq \frac 12 + \frac 1k$ throughout, we have that $q_{\text{initial}}\geq \frac 1k$. Therefore, $\frac{\strut q_{\text{final}}}{q_{\text{initial}}}\leq \frac k2\leq k$. Plugging this into \eqref{eq:ratio of q} and taking logarithms, we find that
	\[
		\log k \geq \log \left( \frac{q_{\text{final}}}{q_{\text{initial}}} \right) \gtrsim \varepsilon \left( -t + \sum_{i=1}^s \frac{1-\beta_i}{\beta_i} \right),
	\]
	implying that 
	\[
		\sum_{i=1}^s \frac{1-\beta_i}{\beta_i} \lesssim t + \frac{\log k}{\varepsilon} = t+o(k),
	\]
	where we plug in our choice of $\varepsilon = k^{-1/4}$ in the final equality.

	This proof worked under the assumption that we remain throughout in the range $p \geq \frac 12 + \frac 1k$. Let us now work in the complementary regime, where $p < \frac 12 + \frac 1k$ throughout the whole process. In this case, recalling the definition of $\alpha(p)$ from \eqref{eq:alpha def}, we have
	\begin{equation}\label{eq:p change}
		p' - p \geq
		\begin{cases}
			0 & \text{when we do a blue step,}\\
			- \frac \varepsilon k & \text{when we do a red step,}\\
			+\frac \varepsilon k \cdot \frac{1-\beta_i}{\beta_i} & \text{when we do the $i$th density-boost step.}
		\end{cases}
	\end{equation}
	Adding up \eqref{eq:p change} over all steps of the process, we conclude that
	\begin{equation}\label{eq:sum up p change}
		p_{\text{final}}-p_{\text{initial}} \geq - \frac \varepsilon k t+ \frac \varepsilon k \sum_{i=1}^s \frac{1-\beta_i}{\beta_i} =\frac \varepsilon k \left( -t + \sum_{i=1}^s \frac{1-\beta_i}{\beta_i} \right).
	\end{equation}
	Recall that we had $p_{\text{initial}}\geq \frac 12$, and we are now assuming that we remain in the regime $p < \frac 12 + \frac 1k$ throughout, hence in particular $p_{\text{final}}< \frac 12 + \frac 1k$. Therefore $p_{\text{final}}-p_{\text{initial}} \leq \frac 1k$. Plugging this in to \eqref{eq:sum up p change} and rearranging, we conclude that
	\[
		\sum_{i=1}^s \frac{1-\beta_i}{\beta_i} \leq t + \frac 1 \varepsilon = t + o(k),
	\]
	again by our choice of $\varepsilon$.

	We have thus proved the desired inequality in the two extreme cases, namely when $p \geq \frac 12 + \frac 1k$ throughout the process, and when $p < \frac 12 + \frac 1k$ throughout the process. Of course, in reality, we may move between these two regimes multiple times during the execution of \cref{alg:book v1}. However, by breaking the execution of the algorithm into intervals in which we remain in one regime or the other, it is not too difficult to combine the arguments above and conclude that the claimed inequality always holds.
\end{proof}
As an immediate consequence of \cref{lem:zigzag}, we obtain an upper bound on the number $s$ of density-boost steps.
\begin{lemma}\label{lem:beta LB}
	We have
	\[
		s \leq \left( \frac{\beta}{1-\beta} \right)t + o(k).
	\]
	Equivalently,
	\begin{equation}\label{eq:beta LB}
		\beta \geq (1+o(1)) \frac{s}{s+t}.
	\end{equation}
\end{lemma}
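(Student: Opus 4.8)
The plan is to observe that \cref{lem:beta LB} is essentially a bare restatement of the zig-zag lemma (\cref{lem:zigzag}), once one notices that the harmonic mean $\beta$ is lurking inside the left-hand side of that lemma. Concretely, the first step is to record the identity
\[
	\sum_{i=1}^s \frac{1-\beta_i}{\beta_i} = \sum_{i=1}^s \left( \frac{1}{\beta_i} - 1 \right) = \left( \sum_{i=1}^s \frac{1}{\beta_i} \right) - s = \frac{s}{\beta} - s = s\cdot\frac{1-\beta}{\beta},
\]
where the penultimate equality is exactly the definition of $\beta$ as the harmonic mean of $\beta_1,\dots,\beta_s$. Substituting this into \cref{lem:zigzag} collapses that lemma to the single clean inequality $s\cdot\frac{1-\beta}{\beta}\leq t + o(k)$.

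From here, both assertions of \cref{lem:beta LB} follow by elementary rearrangement. For the first form I would multiply through by $\frac{\beta}{1-\beta}$; the only point to check is that this factor does not inflate the error term, which holds because every density-boost step of \cref{alg:book v1} has $\beta_i \leq \frac 12$, hence the harmonic mean also satisfies $\beta \leq \frac 12$, so that $\frac{\beta}{1-\beta}\leq 1$. This gives $s \leq \frac{\beta}{1-\beta}\, t + o(k)$. For the equivalent form \eqref{eq:beta LB}, I would instead solve the inequality $s\cdot\frac{1-\beta}{\beta}\leq t+o(k)$ for $\beta$: it rearranges to $\frac 1\beta \leq 1 + \frac{t+o(k)}{s} = \frac{s+t+o(k)}{s}$, i.e.\ $\beta \geq \frac{s}{s+t+o(k)}$, and since $t<k$ and $s<k$ this equals $(1+o(1))\frac{s}{s+t}$ in the regime where the lemma is used.

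I do not expect any substantive obstacle: all of the content sits in \cref{lem:zigzag}, and what remains is bookkeeping. The two places that merit a moment's care are (i) the appeal to $\beta \leq \frac 12$ when passing to the first form, so that the $o(k)$ term survives the rearrangement unharmed, and (ii) the tacit assumption, in going from $\beta \geq \frac{s}{s+t+o(k)}$ to $\beta \geq (1+o(1))\frac{s}{s+t}$, that $s+t$ dominates the additive $o(k)$ error — which is harmless here, since if $s+t$ were negligible then neither $\ab{A}$ nor $\ab{Y}$ would have changed appreciably and the eventual application of the lemma would be vacuous.
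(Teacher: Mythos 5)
Your proposal is correct and follows exactly the paper's argument: rewrite the left-hand side of the zig-zag lemma as $s\cdot\frac{1-\beta}{\beta}$ using the definition of $\beta$ as the harmonic mean, multiply through by $\frac{\beta}{1-\beta}\leq 1$ (using $\beta_i\leq\frac12$) to get the first bound, and solve for $\beta$ to get the second. Your caveat about absorbing the $o(k)$ error in the final step is also the same issue the paper flags in a footnote.
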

\begin{proof}
	By the definition of $\beta$, we have that
	\[
		\frac 1 s\sum_{i=1}^s \frac{1-\beta_i}{\beta_i} = 
		\frac{1-\beta}{\beta}.
	\]
	Plugging this into \cref{lem:zigzag} shows that
	\[
		s = \left( \frac{\beta}{1-\beta} \right)\sum_{i=1}^s \frac{1-\beta_i}{\beta_i} \leq \left( \frac{\beta}{1-\beta} \right)(t+o(k)).
	\]
	Moreover, since each $\beta_i$ is at most $\frac 12$, we find that $\frac \beta{1-\beta} \leq 1$, yielding the first claimed bound. The second bound follows by solving for\footnote{Again, there is some cheating going on here---one can only obtain the claimed estimate if $s$ is not too small as a function of $k$, in order to absorb the error terms. In the formal proof, one has to separate into cases: the bound \eqref{eq:beta LB} is valid if $s$ is not too small, whereas if $s$ is very small one can complete the proof of \cref{thm:3.9999} via a simpler analysis.} $\beta$.
\end{proof}

\subsection{Proof attempt for \cref{thm:3.9999}}
We are now ready to put everything together. Let $C$ be a constant, which we will optimize later, and let $N=2^{(1+C)k}$. Our plan is to show that if $C$ is chosen appropriately, then $r(k) \leq N= 2^{(1+C)k}$. Since our goal is to prove \cref{thm:3.9999}, we thus hope to be able to prove this for some fixed $C<1$, but for the moment, let us leave $C$ as an unspecified constant.

We proceed by contradiction, so let us assume that there is a two-coloring of $E(K_N)$ with no monochromatic $K_k$. We now run \cref{alg:book v1} on this coloring. We certainly obtain the desired contradiction if the algorithm finds a red or blue $K_k$, so let us assume that this does not happen. Therefore, the process only ends when $\ab X\leq 1$, which by \cref{lem:X size} implies that
\[
	N \leq \beta^{-s} 2^{t+b+o(k)} \leq \beta^{-s} 2^{t+(k-s)+o(k)},
\]
where we plug in the bound $b+s \leq k$, arising from the fact that $B$ never becomes a blue $K_k$. We now plug in the lower bound on $\beta$ from \cref{lem:beta LB} to find that
\begin{equation}
	N \leq \left( \frac{t+s}{s} \right)^s 2^{k+t-s+o(k)}.\label{eq:constraint}
\end{equation}
At this point everything is in terms of the parameters $s$ and $t$, which we expect to scale linearly in $k$, so it is more convenient to reparametrize everything in terms of $x \coloneqq \frac tk, y \coloneqq \frac sk$. In terms of these parameters, and recalling that $N=2^{(1+C)k}$, we can rewrite \eqref{eq:constraint} as
\[
	C -o(1)\leq (x-y) + y \log_2 \left( \frac{x+y}{y} \right)\eqqcolon G(x,y).
\]
Recall that our goal is to obtain a contradiction, and the only thing we have not yet specified is the value of $C$ we choose. In particular, if we pick $C$ to be larger than the maximum value of $G(x,y)$ over the square $[0,1]^2$, then we certainly obtain a contradiction. As our goal is to eventually obtain a contradiction with some fixed $C<1$, we would be happy if this maximum value were less than $1$. However, this is not true, as shown on the following contour plot; the maximum value of $G$ is roughly $1.33$.
\begin{center}
	\includegraphics[width=6.5cm]{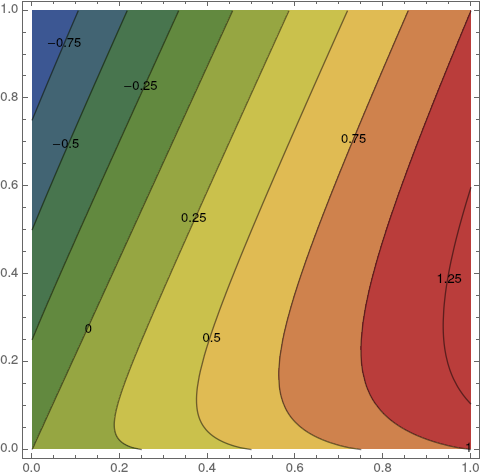}
\end{center}
Of course, if we recall our original strategy, it is way too much to hope for that the maximum of $G$ is less than $1$. Indeed, the whole point of the book algorithm was to output the book $(A,Y)$, and to ensure that its parameters are good enough to apply \cref{lem:book to clique}. 

What are the parameters of this book? Well, we have that $\ab A=t$ by definition, and 
\[
	m \coloneqq \ab Y \geq 2^{-t-s-o(k)}N
\]
by \cref{lem:Y size}. By \cref{lem:book to clique}, we know that if $m \geq r(k-t,k)$, we find a monochromatic $K_k$, yielding our desired contradiction. Thus, we may assume that $m<r(k-t,k)$, implying that
\begin{equation}
	N \leq 2^{t+s+o(k)} m < 2^{t+s+o(k)} r(k-t,k).\label{eq:constraint 2}
\end{equation}
By \cref{thm:ES binomial}, we know that
\[
	r(k-t,k) \leq \binom{2k-t}{k-t}.
\]
A useful upper bound on binomial coefficients is that $\binom ab \leq 2^{a H(b/a)}$, where $H(z) \coloneqq -z\log_2 z - (1-z)\log_2(1-z)$ is the \emph{binary entropy function}. Plugging this in, we find that
\[
	\log_2 r(k-t,k) \leq \log_2 \binom{2k-t}{k-t} \leq (2k-t) H \left( \frac{k-t}{2k-t} \right)=k \left[ (2-x) H \left( \frac{1-x}{2-x} \right) \right].
\]
Taking logarithms of \eqref{eq:constraint 2} and dividing by $k$ shows that
\[
	C-o(1) \leq -1 + (x+y) + (2-x)H \left( \frac{1-x}{2-x} \right) \eqqcolon F(x,y).
\]
Putting all of this together, we have shown that either we derive the claimed contradiction, or $C-o(1)\leq \min\{F(x,y),G(x,y)\}$. Again, we have the freedom to choose $C$, so we can obtain the desired contradiction if we set $C$ to be larger than the maximum of $\min\{F(x,y),G(x,y)\}$ on the square $[0,1]^2$. In particular, as our goal is to pick $C<1$, we are done if $\min\{F(x,y),G(x,y)\}<1$ for all $x,y\in [0,1]$. Here is a contour plot of $F$:
\begin{center}
	\includegraphics[width=6.5cm]{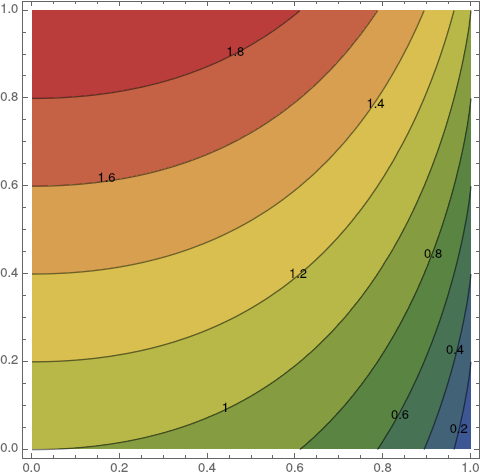}
\end{center}
This looks great! The areas where $F$ is large seem to be different from the areas where $G$ is large, so there should be no problem to show that their maximum is always strictly less than $1$. In fact, here are the regions where $F>1$ and $G>1$.
\begin{center}
	\includegraphics[width=6.5cm]{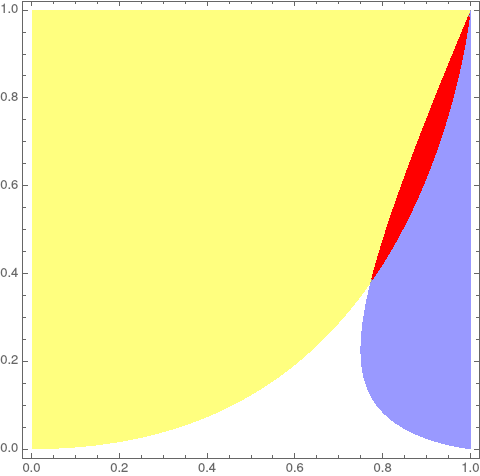}
\end{center}
Bad news! There's a big red region where both functions are greater than $1$, and our whole proof strategy fails. In fact, one can check that $\min\{F(x,y),G(x,y)\}$ attains a maximum value of roughly $1.054$. That is, in order to obtain a contradiction, the smallest value of $C$ we could pick is $1.054$, and thus this whole complex proof is only able to show that $r(k) \leq 2^{2.054k}\approx 4.15^k$, which is worse than the bound of \cref{thm:ES}.

\section{Rescuing the argument}\label{sec:rescue}
The fact that $\min\{F(x,y),G(x,y)\}>1$ for some $(x,y) \in [0,1]^2$ is a fundamental obstruction to this approach. In order to overcome it, we will use two tricks, both of which involve tweaking the book algorithm. 
\subsection{Changing the cutoff}

The first new idea is to examine our criterion for deciding whether to do red or blue steps. Recall that, as in \cref{alg:ES}, we do a blue step if some vertex in $X$ has at least $\frac 12 \ab X$ blue neighbors in $X$, and otherwise we do a red or density-boost step. In the Erd\H os--Szekeres setting, this is the optimal choice: since the argument is symmetric in the two colors, it would be strictly worse to use any other cutoff.

However, the book algorithm is highly asymmetric, so we should re-examine this assumption. Recall that at the end of the process, we output the red book $(A,Y)$, where $\ab A=t$ and $\ab Y \geq 2^{-t-s-o(k)}N$ by \cref{lem:Y size}. The fact that $\ab Y$ decays like $2^{-t}N$ is unavoidable, but the fact that $\ab Y$ decays exponentially in $s$ shows that density-boost steps are very expensive, in terms of making this trade-off very bad. As such, we should try to minimize the number $s$ of density-boost steps we do, in terms of $t$. Since \cref{lem:beta LB} tells us that $s \leq \frac \beta{1-\beta}t+o(k)$, the natural way to decrease $s$ is to decrease $\beta$.

To achieve this, we do the following. We pick a number $\mu \in [0,1]$, which will be fixed throughout the argument. In step \ref{it:blue step book} of \cref{alg:book v1}, we now perform a blue step if some vertex in $X$ has at least $\mu\ab X$ blue neighbors in $X$; otherwise, we proceed to the subsequent steps of the algorithm unchanged. An important effect of this choice is that now, when we perform the $i$th density-boost step, the parameter $\beta_i$ is constrained to be at most $\mu$, and thus also $\beta \leq \mu$ at the end of the process. In particular, if we pick $\mu < \frac 12$, we will have accomplished our goal of decreasing $s$ relative to $t$. This suggests we should pick $\mu$ very small, but of course there is a trade-off---if $\mu$ is very small then every blue step decreases $\ab X$ by a lot, and thus the process will terminate quickly. To balance these two effects, we want to pick $\mu$ to be neither too large nor too small. For completeness, here is the description of our modified book algorithm.
\begin{algorithm}{Book algorithm with cutoff $\mu$}{book v2}
	\fixind
\begin{enumerate}
	\item If $\ab X \leq 1$, $\ab A\geq k$, or $\ab B \geq k$, stop the process. \label{it:check sizes book v2}
	\item Let $p=d_R(X,Y)$ be the current red density between $X$ and $Y$. Define $\alpha=\alpha(p)$ as in \eqref{eq:alpha def}, where $\varepsilon$ is some fixed parameter throughout the process.

	\item Check whether some vertex $v \in X$ has at least $\mu \ab X$ blue neighbors in $X$. If yes, perform a \emph{blue step}, by updating
	\[
		A \to A,\qquad B \to B \cup \{v\}, \qquad X \to X\cap N_B(v), \qquad Y \to Y,
	\]
	and return to step \ref{it:check sizes book v2}.\label{it:blue step book v2}
	
	\item Check whether some vertex $v \in X$ is {prosperous}, meaning that $d_R(N_R(v) \cap X, N_R(v) \cap Y)\geq p-\alpha$. If yes, perform a \emph{red step}, by updating
	\[
		A \to A \cup \{v\},\qquad B \to B, \qquad X \to X\cap N_R(v), \qquad Y \to Y \cap N_R(v),
	\]
	and return to step \ref{it:check sizes book v2}.

	\item In the remaining case, pick some vertex $v \in X$. It is not prosperous, and has $\beta\ab X$ blue neighbors in $X$, for some $\beta \leq \mu$. We now perform a \emph{density-boost step}, by updating
	\[
		A \to A,\qquad B \to B \cup \{v\}, \qquad X \to X\cap N_B(v), \qquad Y \to Y \cap N_R(v),
	\]
	and return to step \ref{it:check sizes book v2}.
\end{enumerate}
\end{algorithm}
\begin{table}[hb!]
	\begin{center}
		\tabulinesep=1.2mm
		\begin{tabu}{|c||c|c|c|c|c|}
			\hline
			&$\ab A$ & $\ab B$ & $\ab X$ & $\ab Y$ & $p$ \\ \hhline{|=#*{4}{=|}=|}
			blue step & -- & $+1$ & $\times \mu$ & -- & -- \\ \hline
			red step & $+1$ & -- & $\times (1-\mu)$ & $\times p$ & $-\alpha$ \\ \hline
			density-boost step & -- & $+1$ & $\times \beta$ & $\times p$ & $+ \alpha \frac{1-\beta}{\beta}$
			\\ \hline
		\end{tabu}
	\end{center}

	\caption{How the various parameters evolve during \cref{alg:book v2}. 
	The only difference from \cref{table:book v1} is that blue and red steps shrink $X$ by factors of $\mu$ and $1-\mu$, respectively.
	}
	\label{table:book v2}
\end{table}

In this modified book algorithm, \cref{lem:zigzag,lem:beta LB,lem:p never drops,lem:Y size} remain true; the only change is that \cref{lem:X size} needs to be modified to the following statement, reflecting the fact that each blue (resp.\ red) step shrinks $X$ by a factor\dangerfoot{} of $\mu$ (resp.\ $1-\mu$) in the worst case. The proof is otherwise identical to that of \cref{lem:X size}.
\begin{lemma}[Modified \cref{lem:X size}]\label{lem:modified X size}
	At the end of the process, we have
	\[
		\ab X \geq 2^{-o(k)} (1-\mu)^t \mu^b \beta^s N.
	\]
	In particular, since $b+s\leq k$, we have
	\[
		\ab X \geq 2^{-o(k)} (1-\mu)^t \mu^{k-s} \beta^s N.
	\]
\end{lemma}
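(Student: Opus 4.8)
The plan is to follow the proof of \cref{lem:X size} essentially verbatim, substituting the worst-case shrink factors recorded in \cref{table:book v2}. I would track the multiplicative effect of each of the three step types on $\ab X$, starting from the initial value $\ab X = \frac N2$ and absorbing all the $\pm 1$ roundoff errors accumulated over the $O(k)$ steps into a single $2^{-o(k)}$ factor at the end.

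For the deterministic factors: a blue step replaces $X$ by $X\cap N_B(v)$ for a vertex $v$ with at least $\mu\ab X$ blue neighbors in $X$, so $\ab X$ shrinks by a factor of at least $\mu$, contributing at least $\mu^b$ over the $b$ blue steps; a red step is taken only when no vertex has $\mu\ab X$ blue neighbors, so the chosen vertex has more than $(1-\mu)\ab X$ red neighbors in $X$, whence $\ab X$ shrinks by a factor of at least $1-\mu$, contributing at least $(1-\mu)^t$ over the $t$ red steps. For the density-boost steps, the harmonic mean enters exactly as in \cref{lem:X size}: the $i$th density-boost step shrinks $\ab X$ by the factor $\beta_i$, and AM--GM applied to $\frac 1{\beta_1},\dots,\frac 1{\beta_s}$ gives $\frac 1\beta = \frac 1s\sum_{i=1}^s\frac 1{\beta_i}\geq\left(\prod_{i=1}^s\frac 1{\beta_i}\right)^{1/s}$, i.e.\ $\prod_{i=1}^s\beta_i\geq\beta^s$. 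Multiplying these contributions together with the initial $\frac N2$ yields $\ab X\geq 2^{-o(k)}(1-\mu)^t\mu^b\beta^s N$, the first displayed bound; and since $B$ never becomes a blue $K_k$ we have $b+s\leq k$, so $b\leq k-s$, and $\mu\leq 1$ gives $\mu^b\geq\mu^{k-s}$, which upon substitution yields the second bound.

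I do not anticipate any real obstacle here: as the surrounding text notes, this is pure bookkeeping, differing from \cref{lem:X size} only in that red and blue steps now shrink $X$ by factors of $1-\mu$ and $\mu$ rather than $\frac 12$. The only point requiring the usual (dagger-marked) care is checking that the additive $\pm 1$ losses at each of the $O(k)$ steps aggregate to a harmless $2^{-o(k)}$ factor.
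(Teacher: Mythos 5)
Your proof is correct and matches the paper's approach exactly: the paper simply remarks that the proof is identical to that of Lemma~\ref{lem:X size} after substituting the worst-case shrink factors $\mu$ and $1-\mu$ from Table~\ref{table:book v2}, which is precisely the bookkeeping you carry out, including the AM--GM step for the density-boost contribution and the substitution $b \leq k-s$ for the second bound.
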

Since the process terminates when $\ab X \leq 1$, we conclude from \cref{lem:modified X size} that
\begin{equation}\label{eq:bad mu exponent}
	N \leq 2^{o(k)} (1-\mu)^{-t} \mu^{-(k-s)} \beta^{-s} \leq 2^{o(k)} (1-\mu)^{-t} \mu^{-(k-s)} \left( \frac{s+t}{s} \right)^s,
\end{equation}
where the final inequality follows from the lower bound on $\beta$ in \cref{lem:beta LB}. Taking logarithms and dividing by $k$, we conclude that
\[
	C-o(1) \leq -1 + x \log_2 \left( \frac{1}{1-\mu} \right) + (1-y) \log_2 \frac 1 \mu + y \log_2 \left( \frac{x+y}{y} \right) \eqqcolon G_\mu(x,y).
\]
Note that in the case $\mu=\frac 12$, we precisely recover the previous function $G$, which of course makes sense as we are then recovering \cref{alg:book v1}. Here are contour plots of $G_\mu$ for $\mu \in \{\frac 1{10},\frac 2{10},\frac 3{10},\frac 4{10}\}$.
\begin{center}
	\includegraphics[width=3.7cm]{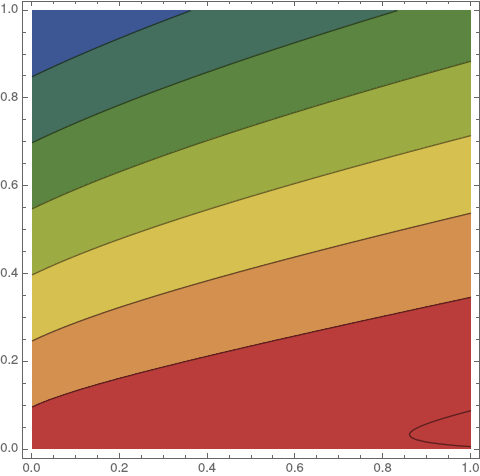}
	\includegraphics[width=3.7cm]{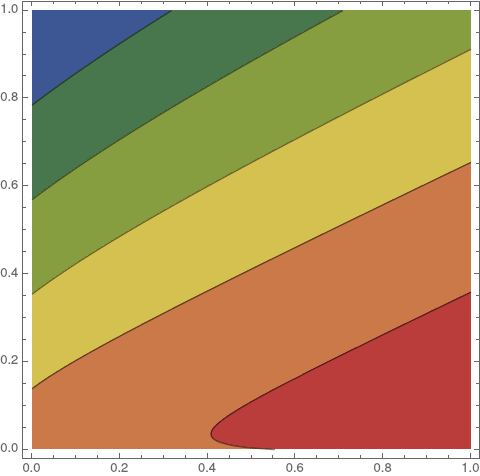}
	\includegraphics[width=3.7cm]{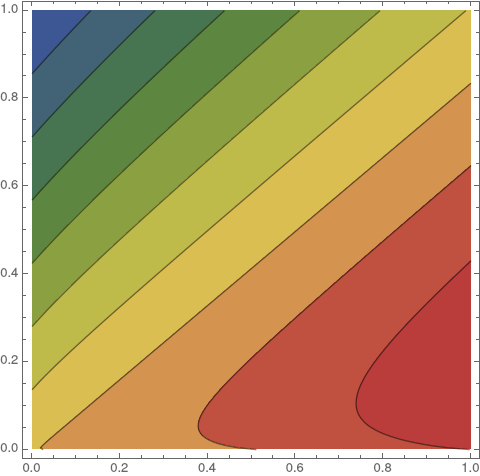}
	\includegraphics[width=3.7cm]{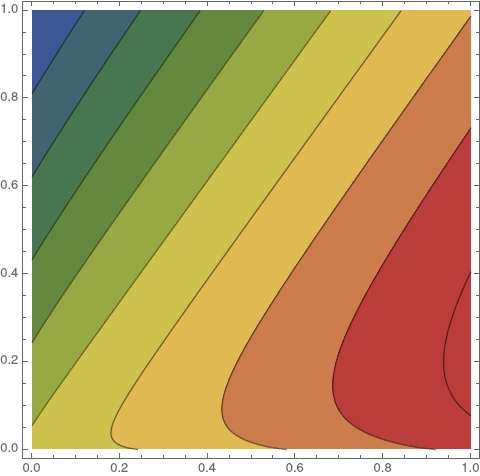}
\end{center}
And here are pictures of the regions where $F>1$ and $G_\mu>1$, for $\mu \in \{\frac 1{10},\frac 2{10},\frac 3{10},\frac 4{10}\}$.
\begin{center}
	\includegraphics[width=3.7cm]{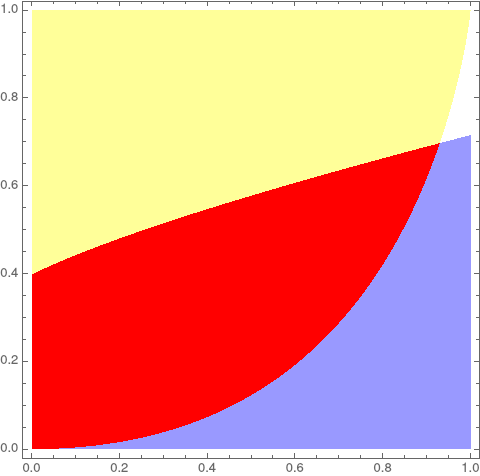}
	\includegraphics[width=3.7cm]{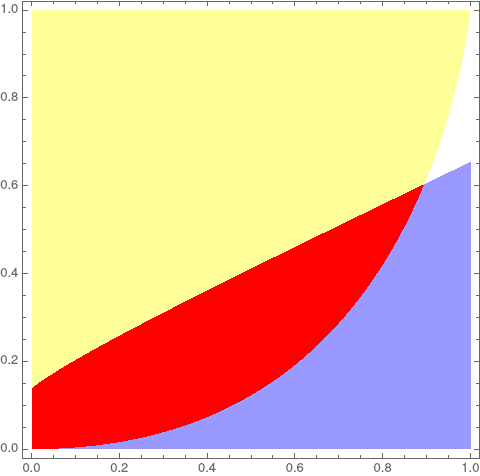}
	\includegraphics[width=3.7cm]{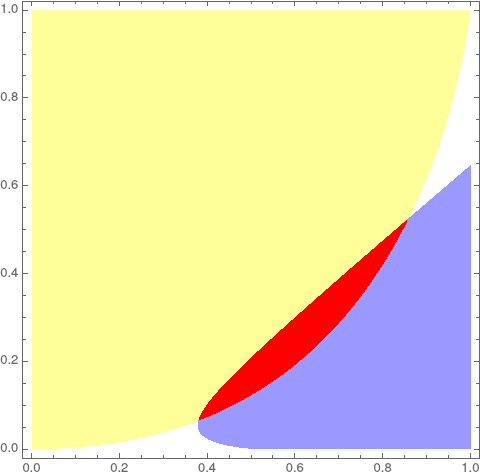}
	\includegraphics[width=3.7cm]{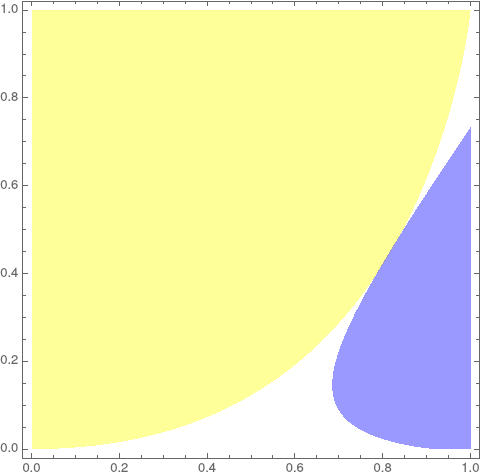}
\end{center}
It looks like we're already done at $\mu=\frac 4{10}=\frac 25$, but unfortunately we're not: one can check that $\min\{F(x,y),G_{\frac 25}(x,y)\}$ attains a maximum value of $1.0017$, hence we only obtain a bound of $r(k) \leq 4.006^k$. Here is a closer view of what happens at $\mu=\frac 25$:
\begin{center}
	\includegraphics[width=8cm]{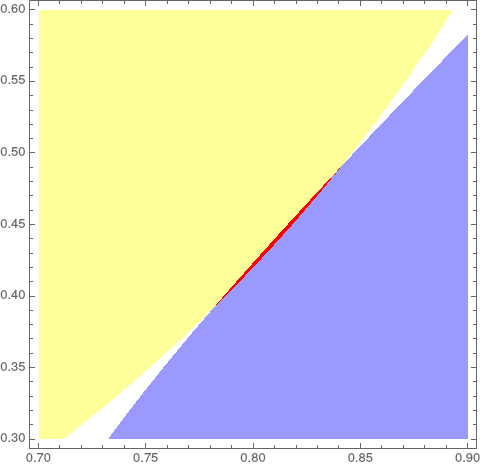}
\end{center}
But we're definitely making progress! The bad red region is extremely small now, and our maximum value of $\min\{F,G_\mu\}$ is extraordinarily close to $1$. Unfortunately, one can check that no choice of $\mu$ will actually decrease this value below $1$---which would complete the proof---so another idea is needed.

\subsection{Off-diagonal Ramsey numbers}
So far, we have played with the parameter~$\mu$ in order to vary the region where $G_\mu>1$, and have almost succeeded in making it disjoint from the region where $F>1$. We will now try to tweak~$F$, in order to move this latter region. Recall that the way we defined~$F$ was in terms of an upper bound on $r(k-t,k)$. If we can obtain a better upper bound on $r(k-t,k)$, then $F$~will decrease, and we may be in business. In fact, we don't need to improve the upper bound on $r(k-t,k)$ in all cases; it suffices to improve this upper bound for pairs $(k-t,k)$ near the problematic region where both~$F$ and $G_{\frac 25}$ are greater than~$1$. Since this problematic region is near $x \approx 0.75$, we could hope to improve the upper bound on $r(k-t,k)$ where $k-t \approx 0.75k$, or equivalently on $r(k,\ell)$ where $\ell \approx \frac k4$.

There is actually a good reason to expect this to work. Recall \cref{alg:off diag ES}, which yields an upper bound on such off-diagonal Ramsey numbers. In that algorithm, we choose whether to do red or blue steps based on the cutoff $\gamma=\frac{\ell}{k+\ell}$. If we just blindly import the same idea into the book algorithm, it makes sense to set $\mu \approx \frac{\ell}{k+\ell}$ in order to upper-bound $r(k,\ell)$. In case $\ell \approx \frac k4$, we have $\mu \approx \frac 15$. In our argument above, we saw that it is good to take $\mu$ small, except for the trade-off that now $X$ shrinks by a factor of $\mu$ for every blue step. However, in this regime, we will do at most $\ell$ blue steps, and $\mu^\ell \approx (\frac 15)^{\frac k4} \approx 0.67^k$; in contrast, in the argument above, the blue steps shrink $X$ by $(\frac 25)^k = 0.4^k$, which is a much more significant decrease. Hence we may expect the trade-offs to work well for us.

For completeness, here is our final book algorithm, suited for upper-bounding $r(k,\ell)$. We set $\mu=\frac{\ell}{k+\ell}$ and $\varepsilon = k^{-1/4}$. We initiate $A=B=\varnothing$, and $X\sqcup Y$ an arbitrary partition of $V(K_N)$ into two equally-sized parts. Let $p_{\text{initial}}=d_R(X,Y)$ be the density of red edges between $X$ and $Y$ at the beginning of the process, and define
\begin{equation}\label{eq:off diag alpha def}
	\alpha(p)\coloneqq
	\begin{cases}
		\frac{\varepsilon}{k} & \text{if } p \leq p_{\text{initial}}+\frac 1 k,\\
		\varepsilon(p-p_{\text{initial}})&\text{otherwise.}
	\end{cases}
\end{equation}
The algorithm is then as follows.
\begin{algorithm}{Off-diagonal book algorithm}{off diag book}
	\fixind
	\begin{enumerate}
		\item If $\ab X \leq 1$, $\ab A\geq k$, or $\ab B \geq \ell$, stop the process. \label{it:check sizes book OD}
		\item Let $p=d_R(X,Y)$ be the current red density between $X$ and $Y$. Define $\alpha=\alpha(p)$ as in \eqref{eq:off diag alpha def}.
		
		\item Check whether some vertex $v \in X$ has at least $\mu \ab X$ blue neighbors in $X$. If yes, perform a \emph{blue step}, by updating
		\[
		A \to A,\qquad B \to B \cup \{v\}, \qquad X \to X\cap N_B(v), \qquad Y \to Y,
		\]
		and return to step \ref{it:check sizes book OD}.
		
		\item Check whether some vertex $v \in X$ is {prosperous}, meaning that $d_R(N_R(v) \cap X, N_R(v) \cap Y)\geq p-\alpha$. If yes, perform a \emph{red step}, by updating
		\[
		A \to A \cup \{v\},\qquad B \to B, \qquad X \to X\cap N_R(v), \qquad Y \to Y \cap N_R(v),
		\]
		and return to step \ref{it:check sizes book OD}.
		
		\item In the remaining case, pick some vertex $v \in X$. It is not prosperous, and has $\beta\ab X$ blue neighbors in $X$, for some $\beta \leq \mu$. We now perform a \emph{density-boost step}, by updating
		\[
		A \to A,\qquad B \to B \cup \{v\}, \qquad X \to X\cap N_B(v), \qquad Y \to Y \cap N_R(v),
		\]
		and return to step \ref{it:check sizes book OD}.
	\end{enumerate}
\end{algorithm} 
Apart from the choice of $\mu = \frac{\ell}{k+\ell}$, this algorithm is identical to \cref{alg:book v2}, except that we now stop when $\ab B \geq \ell$, rather than when $\ab B \geq k$ as before. In particular, \cref{table:book v2} still gives the relevant changes in the parameters.
Unfortunately, there is an additional complication introduced by moving to the off-diagonal setting. Before, when we sought to upper-bound $r(k)$, we could assume that the initial red density $p_{\text{initial}}$ was at least $\frac 12$, by simply swapping the roles of the two colors if necessary. However, once we are in the off-diagonal setting, this is no longer allowed, and we may have no control on $p_{\text{initial}}$. Let us make another completely unjustified assumption.
\begin{assumption}\label{ass:p_0}
	At the beginning of the process, we have $p_{\mathrm{initial}} \geq \frac{k}{k+\ell}=1-\mu$.
\end{assumption}
Note that this is a natural assumption, since \cref{alg:off diag ES} ``predicts'' that $\frac{k}{k+\ell}$ is roughly the correct red density to expect, in the sense that in the analysis of \cref{alg:off diag ES}, this red density is the worst-case occurrence. That is, if \cref{ass:p_0} is false ``robustly'', then \cref{alg:off diag ES} should already prove a stronger bound than $r(k,\ell) \leq \binom{k+\ell}{\ell}$.  
In fact, one can essentially force \cref{ass:p_0} to hold because of such an argument; if we start with $p_{\text{initial}}<\frac{k}{k+\ell}$, we can run a number of steps of the Erd\H os--Szekeres algorithm, until we end up with $p \geq \frac{k}{k+\ell}$. If this never happens, then \cref{alg:off diag ES} itself will prove that $r(k,\ell) \ll \binom{k+\ell}{\ell}$.

Given \cref{ass:p_0}, we can conclude that \cref{lem:modified X size,lem:zigzag,lem:beta LB} remain true for \cref{alg:off diag book}. Moreover, we can prove the following modified version of \cref{lem:p never drops,lem:Y size} (combined into a single statement), whose proof is essentially unchanged. 
\begin{lemma}[Modified \cref{lem:p never drops,lem:Y size}]\label{lem:off diagonal Y size}
	We have $p \geq p_{\mathrm{initial}} - \varepsilon \geq (1-\mu)-\varepsilon$ throughout the entire process. Therefore, at the end of the process, we have
	\[
		\ab Y \geq (1-\mu)^{t+s+o(k)}N.
	\]
\end{lemma}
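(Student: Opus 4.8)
The plan is to mirror the proofs of \cref{lem:p never drops} and \cref{lem:Y size}, tracking the single scalar $p$ through the three step types of \cref{alg:off diag book} and then feeding the resulting lower bound on $p$ into the evolution of $\ab Y$ recorded in \cref{table:book v2}. The only genuinely new inputs are the revised definition of $\alpha(p)$ in \eqref{eq:off diag alpha def}, now anchored at $p_{\mathrm{initial}}$ rather than at $\frac 12$, and \cref{ass:p_0}, which supplies $p_{\mathrm{initial}} \geq 1-\mu$.

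For the first claim I would argue step by step. By \cref{ass:degree regular} a blue step leaves $p$ unchanged; a density-boost step can only increase $p$, by \eqref{eq:density-boost density}; and a red step decreases $p$ by at most $\alpha(p)$. The key point is the choice of $\alpha$ in \eqref{eq:off diag alpha def}: if $p > p_{\mathrm{initial}} + \frac 1k$ then $\alpha(p) = \varepsilon(p-p_{\mathrm{initial}})$, so $p - \alpha(p) = p_{\mathrm{initial}} + (1-\varepsilon)(p-p_{\mathrm{initial}}) \geq p_{\mathrm{initial}}$; and if $p \leq p_{\mathrm{initial}} + \frac 1k$ then $\alpha(p) = \frac\varepsilon k$. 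Since we perform $t \leq k$ red steps in total, the value of $p$ can therefore never drop below $p_{\mathrm{initial}} - t\cdot\frac\varepsilon k \geq p_{\mathrm{initial}} - \varepsilon$, and then \cref{ass:p_0} gives $p \geq (1-\mu)-\varepsilon$ throughout.

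For the second claim, recall from \cref{table:book v2} that $\ab Y$ is untouched by blue steps and is multiplied by the current value of $p$ (using \cref{ass:degree regular}) at each of the $t+s$ red and density-boost steps. Starting from $\ab Y = \frac N2$ and using the bound $p \geq (1-\mu)-\varepsilon$ just established, we get $\ab Y \geq ((1-\mu)-\varepsilon)^{t+s}\cdot\frac N2$. It remains to absorb the $\varepsilon$ and the factor $\frac 12$: since $\mu = \frac{\ell}{k+\ell} \leq \frac 12$ we have $1-\mu \in [\frac 12,1)$, and since $t+s = O(k)$ and $\varepsilon = k^{-1/4}$,
\[
	((1-\mu)-\varepsilon)^{t+s} = (1-\mu)^{t+s}\left(1-\frac{\varepsilon}{1-\mu}\right)^{t+s} \geq (1-\mu)^{t+s}\,2^{-O(\varepsilon k)} = (1-\mu)^{t+s+o(k)},
\]
and likewise $\frac 12 = (1-\mu)^{o(k)}$; combining these gives $\ab Y \geq (1-\mu)^{t+s+o(k)}N$.

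I do not expect a serious obstacle here; the argument is essentially a transcription of \cref{lem:p never drops} and \cref{lem:Y size}. The two points requiring care are that everything rests on \cref{ass:p_0} (which, as discussed after that assumption, can itself be arranged by first running Erd\H os--Szekeres steps, at the cost of either reaching $p_{\mathrm{initial}} \geq \frac{k}{k+\ell}$ or already beating $\binom{k+\ell}{\ell}$), and that one must check $1-\mu$ is bounded away from $0$ so that the relative perturbation $\varepsilon/(1-\mu)$ is $o(1)$ and can be swept into the $2^{-o(k)} = (1-\mu)^{o(k)}$ error term --- together, of course, with the usual additive $\pm 1$ roundings in the sizes of $X$ and $Y$.
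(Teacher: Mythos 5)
Your proof is correct and follows precisely the route the paper intends: the paper declines to write out this proof, simply asserting that it is ``essentially unchanged'' from those of \cref{lem:p never drops,lem:Y size}, and your argument is a faithful and correct transcription of that remark, with $\tfrac 12$ replaced throughout by $p_{\mathrm{initial}}$ and \cref{ass:p_0} invoked at the end. The one place you go beyond the paper's level of detail is the absorption of the factor $\tfrac 12$ and the $\varepsilon$-perturbation into the $(1-\mu)^{o(k)}$ error term; this step does implicitly use that $\mu$ is bounded away from $0$ (otherwise $\log_{1-\mu}\tfrac 12$ blows up), which you gesture at by noting $1-\mu$ is bounded away from $0$ but don't quite say --- in the regime where \cref{thm:off diagonal exp} is non-trivial one has $\ell = \Omega(k)$ and hence $\mu = \Omega(1)$, so this is harmless, but it is worth stating explicitly.
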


With all of this setup, we are finally able to prove\footnote{One should really write ``prove'', since everything here is dependent on the unjustified \cref{ass:degree regular,ass:p_0}, as well as on the key \cref{lem:zigzag}, which we did not rigorously prove.
Additionally, the bound in \cref{thm:off diagonal exp} is stronger than any result proved by \textcite{2303.09521}, and this too is a consequence of the fact that we are being unrigorous, especially with the verification of certain numerical inequalities. However, \cref{thm:off diagonal exp} is true; a rigorous proof of a stronger statement is given by \textcite[Corollary 6]{2407.19026}.
} an exponentially-improved upper bound on $r(k,\ell)$. 
\begin{theorem}\label{thm:off diagonal exp}
	We have $r(k,\ell) \leq 2^{-\frac 29 \ell +o(k)} \binom{k+\ell}{\ell}$ for all $\ell \leq \frac k4$.
\end{theorem}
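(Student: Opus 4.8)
The plan is to run \cref{alg:off diag book} with $\mu = \frac{\ell}{k+\ell}$ and $\varepsilon = k^{-1/4}$ on a hypothetical two-coloring of $E(K_N)$ with no red $K_k$ and no blue $K_\ell$, where $N$ is chosen just above $2^{-\frac 29\ell}\binom{k+\ell}{\ell}$, and to derive a contradiction. As discussed around \cref{ass:p_0}, we may assume \cref{ass:p_0} holds (otherwise one first runs Erd\H os--Szekeres steps until the red density reaches $1-\mu$, and if this never occurs \cref{alg:off diag ES} already proves an even stronger bound); together with \cref{ass:degree regular}, this makes \cref{lem:modified X size,lem:zigzag,lem:beta LB,lem:off diagonal Y size} available. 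If the algorithm ever produces $\ab A \geq k$ or $\ab B \geq \ell$ we have found a monochromatic clique and are done, so we may assume it halts only when $\ab X \leq 1$, after $t < k$ red steps, $s$ density-boost steps, and $b$ blue steps with $b + s < \ell$; at termination we hold a red book $(A,Y)$ with $\ab A = t$ and, by \cref{lem:off diagonal Y size}, $\ab Y \geq (1-\mu)^{t+s+o(k)}N$.

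Next I would extract two competing upper bounds on $N$. Set $x \coloneqq t/k$, $y \coloneqq s/k$, $\lambda \coloneqq \ell/k \leq \frac 14$, so that $\mu = \frac{\lambda}{1+\lambda} \leq \frac 15$ and $\frac{\log_2\binom{k+\ell}{\ell}}{k} = (1+\lambda)H(\mu)+o(1) = \lambda\log_2\frac 1\mu + \log_2\frac 1{1-\mu} + o(1)$. First, \cref{lem:book to clique} forces $\ab Y < r(k-t,\ell)$ (otherwise we find a monochromatic clique), and combining this with \cref{thm:ES binomial} and the entropy bound $r(k-t,\ell) \leq \binom{k-t+\ell}{\ell} \leq 2^{(k-t+\ell)H(\lambda/(1-x+\lambda))}$ gives
\[
	\frac{\log_2 N}{k} \leq (x+y)\log_2\tfrac 1{1-\mu} + (1-x+\lambda)H\!\left(\tfrac{\lambda}{1-x+\lambda}\right) + o(1) \eqqcolon \wt F(x,y).
\]
Second, since the process halts only at $\ab X \leq 1$, \cref{lem:modified X size} (with $b \leq \ell - s$ and $\mu < 1$) together with the bound $\beta \geq (1+o(1))\frac{s}{s+t}$ from \cref{lem:beta LB} gives
\[
	\frac{\log_2 N}{k} \leq x\log_2\tfrac 1{1-\mu} + (\lambda-y)\log_2\tfrac 1\mu + y\log_2\tfrac{x+y}{y} + o(1) \eqqcolon \wt G(x,y),
\]
the off-diagonal analogue of the function $G_\mu$ of \cref{sec:rescue}. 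As the realized pair $(x,y)$ lies in $[0,1]\times[0,\lambda]$ and is otherwise uncontrolled, we get $\frac{\log_2 N}{k} \leq \max_{[0,1]\times[0,\lambda]}\min\{\wt F,\wt G\}$, so it suffices to verify
\[
	\max_{(x,y)\in[0,1]\times[0,\lambda]}\min\{\wt F(x,y),\wt G(x,y)\} \;\leq\; \lambda\log_2\tfrac 1\mu + \log_2\tfrac 1{1-\mu} - \tfrac 29\lambda + o(1).
\]
Since the right-hand side equals $\frac{\log_2\binom{k+\ell}{\ell}}{k} - \frac 29\lambda + o(1)$, choosing $N$ with $\frac{\log_2 N}{k}$ slightly above it then yields both the contradiction and the stated bound $r(k,\ell) \leq 2^{-\frac 29\ell+o(k)}\binom{k+\ell}{\ell}$.

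This two-variable optimization is the main obstacle, and is where the hypothesis $\ell \leq \frac k4$ is used. The key structural fact is that $\wt G$ is increasing in $x$ while $\wt F$ is non-increasing in $x$ (one computes $\partial_x\wt F = \log_2(1+\lambda) - \log_2(1+\tfrac{\lambda}{1-x}) \leq 0$, using $\mu = \tfrac{\lambda}{1+\lambda}$, so that $\log_2\tfrac 1{1-\mu} = \log_2(1+\lambda)$), and that $\wt F > \wt G$ at $x = 0$ while $\wt F < \wt G$ at $x = 1$ (e.g.\ at $y=0$, $\wt G(0,0) = \lambda\log_2\tfrac 1\mu < \lambda\log_2\tfrac 1\mu + \log_2\tfrac 1{1-\mu} = \wt F(0,0)$, and the general case is similar). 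Hence for each fixed $y$ the minimum is maximized at the unique crossing point $x = x^{\ast}(y)$ where $\wt F = \wt G$, and the maximum over the rectangle equals $\max_{y\in[0,\lambda]}\wt F(x^{\ast}(y),y)$. This reduces the problem to a one-dimensional optimization, which one bounds by $\lambda\log_2\tfrac 1\mu + \log_2\tfrac 1{1-\mu} - \tfrac 29\lambda + o(1)$ using $\mu \leq \tfrac 15$ (for larger $\mu$ the extractable saving shrinks) and small-$\lambda$ estimates such as $\log_2\tfrac 1{1-\mu} = \log_2(1+\lambda) \geq \tfrac{7}{8\ln 2}\lambda > \tfrac 29\lambda$ for $\lambda \leq \tfrac 14$; the degenerate regime where $s$ is not of order $k$ (so that $\beta \geq (1+o(1))\frac{s}{s+t}$ is unavailable) is dealt with by a cruder direct argument. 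Conceptually, the gain over the diagonal analyses of \cref{alg:book v1,alg:book v2} is precisely that the off-diagonal cutoff $\mu = \tfrac{\ell}{k+\ell} \approx \tfrac 15$ makes each of the at most $\ell$ blue steps shrink $X$ by only a factor $\mu$ rather than $\tfrac 25$, which pushes the region where $\wt F$ and $\wt G$ are simultaneously too large outside the rectangle $[0,1]\times[0,\lambda]$ --- the exact obstruction that defeated the diagonal argument.
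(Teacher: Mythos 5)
Your proposal follows the paper's proof essentially step for step: run \cref{alg:off diag book} with $\mu = \frac{\ell}{k+\ell}$, invoke \cref{ass:p_0,ass:degree regular}, and extract the two competing constraints on $N$ from \cref{lem:modified X size,lem:beta LB} on one side and \cref{lem:off diagonal Y size,lem:book to clique,thm:ES binomial} on the other; your $\wt F,\wt G$ are exactly the paper's $\wt F_\mu,\wt G_\mu$ shifted by the additive $1$ coming from the change of normalization $\frac 1k\log_2 N$ versus $C$, and your target inequality is the same one after the identity $(1+\lambda)H(\mu) = \lambda\log_2\frac 1\mu + \log_2\frac 1{1-\mu}$. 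The one place you genuinely add something is the verification of that inequality: the paper verifies it by contour plots, while you reduce it to a one-dimensional maximization via the (correct) monotonicity observations $\partial_x\wt F\leq 0$, $\partial_x\wt G>0$ and the sign comparisons at $x=0$ and $x=1$, before hand-waving the final one-variable estimate much as the paper hand-waves the two-variable one. So the structure is identical; your reduction to a single-variable inequality is a modest sharpening of the numerical verification step, but it is not carried through rigorously either, which is fine since the paper explicitly disclaims rigor at precisely this point.
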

Note that in this theorem, the gain over \cref{thm:ES binomial} is exponential in $\ell$, and not in~$k$. This is natural, and the best we could hope for. Indeed, if $\ell=o(k)$, then the bound in \cref{thm:ES binomial} is already subexponential in~$k$, so it is impossible to improve it by a factor of $2^{-\delta k + o(k)}$ for any fixed $\delta>0$.
\begin{proof}[Proof of \cref{thm:off diagonal exp}]
	Let $C$ be a constant that we will optimize later, and let $N=2^{(1+C)k}$. We fix a two-coloring of $E(K_N)$, and assume for contradiction that there is no red $K_k$ or blue $K_\ell$ in this coloring.
	We apply \cref{alg:off diag book}, with $\mu = \frac{\ell}{k+\ell} \leq \frac 15$. 
	Note that this choice of $\mu$ implies that $\frac\ell k= \frac \mu{1-\mu}$.
	If we output that $A$ is a red $K_k$ or $B$ is a blue $K_\ell$, then we have obtained a contradiction, hence we can assume this does not happen. Therefore, the process only terminates when $\ab X \leq 1$, and we also have that $b+s\leq \ell$. Plugging this into \cref{lem:modified X size}, we find that
	\begin{equation*}
		N \leq 2^{o(k)} (1-\mu)^{-t} \mu^{-(\ell-s)} \beta^{-s} \leq 2^{o(k)} (1-\mu)^{-t} \mu^{-(\ell-s)} \left( \frac{s+t}{s} \right)^s.
	\end{equation*}
	Note that we obtain a better exponent on $\mu$ than we had in \eqref{eq:bad mu exponent}, because the assumption $b+s\leq \ell$ is stronger than what we had before; this is precisely the extra strength gained by moving to the off-diagonal setting. Taking logarithms and dividing by $k$ shows that
	\[
		C-o(1) \leq -1 + x \log_2 \left( \frac{1}{1-\mu} \right) + \left(\frac\mu{1-\mu}-y\right) \log_2 \frac 1 \mu + y \log_2 \left( \frac{x+y}{y} \right) \eqqcolon \wt G_\mu(x,y),
	\]
	where the only difference between $G_\mu$ and $\wt G_\mu$ is the the term $\frac{\mu}{1-\mu}$ in the latter, which is simply $1$ in the former. It comes from the $\ell$ in the exponent; upon dividing by $k$ we obtain $\frac\ell k=\frac \mu{1-\mu}$.

	Additionally, by \cref{lem:off diagonal Y size}, we have
	\[
		\ab Y \geq (1-\mu)^{t+s+o(k)} N.
	\]
	If $\ab Y \geq r(k-t,\ell)$, then we obtain a contradiction by \cref{lem:book to clique}, so we may assume that $\ab Y < r(k-t,\ell)$. Taking logarithms and dividing by $k$ again shows that
	\begin{equation}\label{eq:Y cons off diag}
		C-o(1) \leq -1+ (x+y) \log_2 \left( \frac{1}{1-\mu} \right)+\frac 1k \log_2 r(k-t,\ell).
	\end{equation}
	By \cref{thm:ES}, we have
	\begin{align*}
		\log_2 r(k-t,\ell) &\leq \log_2 \binom{k-t+\ell}{\ell} \\
		&\leq (k-t+\ell) H \left( \frac{\ell}{k-t+\ell} \right)\\
		&= k \cdot \left( 1-x+\frac{\mu }{1-\mu} \right) H \left( \frac{\mu/(1-\mu)}{1-x+\mu/(1-\mu)} \right).
	\end{align*}
	Plugging this into \eqref{eq:Y cons off diag} shows that
	\begin{align*}
		C- o(1) &\leq -1+ (x+y) \log_2 \left( \frac{1}{1-\mu} \right)+\left( 1-x+\frac{\mu }{1-\mu} \right) H \left( \frac{\mu/(1-\mu)}{1-x+\mu/(1-\mu)} \right) \\
		&\eqqcolon \wt F_\mu(x,y).
	\end{align*}
	We are no longer trying to beat the bound $r(k)\leq 4^k$, so our goal is no longer to obtain a contradiction for some $C<1$. Instead, we are comparing to $\frac 1k \log_2 \binom{k+\ell}{\ell}$, which equals $(1+\frac{\mu }{1-\mu}) H(\mu)+o(1)$, and thus our goal is to obtain a contradiction for some fixed $C<(1+\frac{\mu }{1-\mu}) H(\mu) -1$. That is, we would like to show is that for all $\mu \leq \frac 15$, we have $\min \{\wt F_\mu(x,y), \wt G_\mu(x,y)\} < (1+\frac{\mu}{1-\mu})H(\mu)-1$ for all $x,y \in [0,1]$. In fact, we hope to prove this inequality with some slack, so that we gain an improvement in the exponent. 

	Recall that our goal is to prove a gain over \cref{thm:ES binomial} that is exponential in $\ell$. As such, the slack we get in this inequality should scale like $\frac\ell k=\frac{\mu}{1-\mu}$. That is, we would like to prove an inequality of the form
	\[
		\min \{\wt F_\mu(x,y), \wt G_\mu(x,y)\} < \left(1+\frac{\mu}{1-\mu}\right)H(\mu)-1 - \delta \frac{\mu}{1-\mu},
	\]
	where $\delta>0$ is some absolute constant; such a bound would prove that $r(k,\ell) \leq 2^{-\delta \ell + o(k)} \binom{k+\ell}\ell$. 

	Such an inequality holds! In fact, one can check that for $\mu \leq \frac 15$, we may take $\delta$ as large as $\frac 29$. Indeed, here is a plot of the regions where $\wt F_\mu > (1+\frac{\mu}{1-\mu})H(\mu)-1-\frac 29 \frac{\mu}{1-\mu}$ and $\wt G_\mu > (1+\frac{\mu}{1-\mu})H(\mu)-1-\frac 29 \frac{\mu}{1-\mu}$, respectively, for $\mu=\frac 15$. One can verify that the regions only move further apart as $\mu$ decreases, so $\mu=\frac 15$ is the worst case.
	\begin{center}
		\includegraphics[width=8cm]{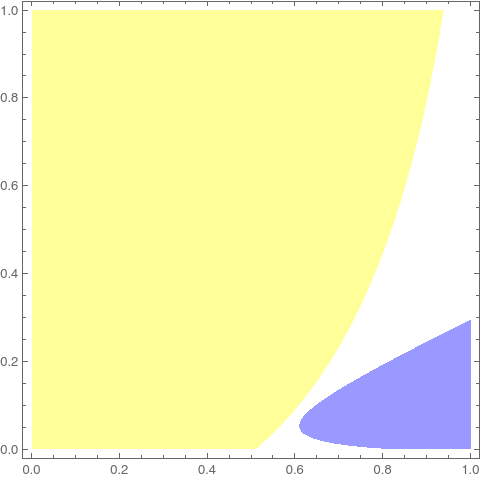}
	\end{center}
	This shows that we do indeed get a contradiction if we set $C = (1+\frac{\mu}{1-\mu})H(\mu)-\frac 29 \frac{\mu}{1-\mu}$, proving the bound 
	\[
		r(k,\ell) \leq 2^{\left(1+\frac{\mu}{1-\mu}H(\mu)-\frac 29 \frac{\mu}{1-\mu}+o(1)\right)k} = 2^{-\frac 29 \ell+o(k)} \binom{k+\ell}{\ell} 
	\]
	for all $\ell \leq \frac k4$.
\end{proof}

\subsection{Back to the diagonal}
Now that we have an upper bound on $r(k,\ell)$ for $\ell \leq \frac k4$, we can finally complete the proof of \cref{thm:3.9999}. We will actually prove the following bound, which is a little bit stronger. The only reason we obtain a stronger bound than \textcite{2303.09521} is the numerical computations: the authors rigorously justify every numerical inequality, which is often substantially simpler to do if one proves a slightly weaker bound, whereas we content ourselves with ``proving'' the numerical bounds through pictures. As mentioned in \cref{sec:intro}, much stronger bounds were recently proved by \textcite{2407.19026} via an alternative analysis. It follows from their results that \cref{thm:3.96} is true, even though we do not provide a rigorous proof.
\begin{theorem}\label{thm:3.96}
	We have $r(k) \leq 2^{(2-\frac 3{200}+o(1))k} \approx 3.96^k$.
\end{theorem}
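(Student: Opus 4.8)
The plan is to re-run \cref{alg:book v2} essentially verbatim, but to replace the single appeal to \cref{thm:ES binomial} at the very end of the analysis by the improved off-diagonal estimate of \cref{thm:off diagonal exp}. Fix the cutoff $\mu=\tfrac25$ as in \cref{alg:book v2} (a small re-optimisation of $\mu$ is available and will be used if needed), set $C\coloneqq 1-\tfrac 3{200}$ and $N\coloneqq 2^{(1+C)k}$, and suppose for contradiction that some two-colouring of $E(K_N)$ has no monochromatic $K_k$. Run \cref{alg:book v2} on this colouring; if it ever produces a red or blue $K_k$ we are done, so assume not, and let $t=\ab A$, let $s$ be the number of density-boost steps, let $b$ be the number of blue steps (so $b+s\le k$, since $B$ grows at each blue and each density-boost step), and write $x=\tfrac tk$, $y=\tfrac sk$. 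Exactly as in \cref{sec:book alg}, at termination there are two possibilities.

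If the process halts because $\ab X\le 1$, then \cref{lem:modified X size} together with $b+s\le k$ and the lower bound on $\beta$ from \cref{lem:beta LB} gives, after taking logarithms and dividing by $k$, the constraint $C-o(1)\le G_{2/5}(x,y)$ of \cref{sec:book alg}. Otherwise the process outputs the red book $(A,Y)$, and since \cref{lem:book to clique} would finish the proof we must have $\ab Y<r(k-t,k)$; combined with $\ab Y\ge 2^{-t-s-o(k)}N$ from \cref{lem:Y size}, this forces an upper bound on $r(k-t,k)$. Here is the new ingredient. If $x<\tfrac 34$, i.e.\ $k-t>\tfrac k4$, we bound $r(k-t,k)$ by \cref{thm:ES binomial} and the binary-entropy estimate for binomial coefficients, recovering the function $F(x,y)$ of \cref{sec:book alg}, so $C-o(1)\le F(x,y)$. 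If instead $x\ge\tfrac 34$, i.e.\ $\ell\coloneqq k-t\le\tfrac k4$, then \cref{thm:off diagonal exp} applies and gives $r(k-t,k)=r(k,\ell)\le 2^{-\frac 29(k-t)+o(k)}\binom{2k-t}{k-t}$; feeding this and the same entropy estimate through the identical computation yields $C-o(1)\le F(x,y)-\tfrac 29(1-x)$. Writing $\wt F(x,y)$ for the function equal to $F(x,y)$ on $x<\tfrac 34$ and to $F(x,y)-\tfrac 29(1-x)$ on $x\ge\tfrac 34$, we conclude in all cases that $C-o(1)\le\min\{\wt F(x,y),G_{2/5}(x,y)\}$.

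It remains to verify the numerical inequality $\min\{\wt F(x,y),G_{2/5}(x,y)\}<\tfrac{197}{200}$ for all $(x,y)\in[0,1]^2$. Recall that with $F$ and $G_{2/5}$ alone, $\min\{F,G_{2/5}\}$ reached a maximum of only about $1.0017$, attained in a tiny sliver, and that this sliver sits near $x\approx\tfrac 34$, which is exactly the regime $\ell=k-t\le\tfrac k4$ in which \cref{thm:off diagonal exp} is available (the range $\ell\le k/4$ was chosen precisely to cover it). There the correction subtracts $\tfrac 29(1-x)$, which near $x=\tfrac 34$ is about $\tfrac1{18}\approx 0.056$, comfortably more than the $\approx 0.0017$ by which $\min\{F,G_{2/5}\}$ overshoots $1$; a contour plot of $\wt F$ against $G_{2/5}$ confirms that their super-$\tfrac{197}{200}$-level sets are now disjoint (re-optimising $\mu$ slightly if the obstruction sliver straddles $x=\tfrac34$). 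Choosing $C=1-\tfrac 3{200}$ then produces the promised contradiction, so $r(k)\le 2^{(1+C+o(1))k}=2^{(2-\frac 3{200}+o(1))k}\approx 3.96^k$.

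As always in this circle of ideas, the genuine work lies in what we have swept aside, and this is where the main obstacle sits. The numerical verification of $\min\{\wt F,G_{\mu}\}<\tfrac{197}{200}$, together with the optimisation over $\mu$ and the analogous verifications inside the proof of \cref{thm:off diagonal exp}, must be carried out rigorously, e.g.\ via interval arithmetic; this is precisely the part that \textcite{2303.09521} handle with care (at the cost of a weaker constant) and that \textcite{2407.19026} later automate and push much further. Moreover the entire argument rests on \cref{ass:degree regular,ass:p_0}, on the cleaning steps needed to replace them by honest approximate statements, and on the \hyperref[lem:zigzag]{zig-zag lemma}, whose full proof we only sketched; propagating the resulting $o(k)$ error terms consistently through the two-case analysis above — in particular making sure the slack $\tfrac{197}{200}-\max\min\{\wt F,G_{\mu}\}>0$ genuinely absorbs them — is the crux of turning this outline into a theorem.
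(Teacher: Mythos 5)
Your proposal is, in substance, the paper's own proof: same choice $\mu=\tfrac25$, same two constraints arising from $\ab X\le 1$ and from $\ab Y<r(k-t,k)$, same use of \cref{thm:off diagonal exp} on the range $x\ge\tfrac34$ (i.e.\ $\ell=k-t\le k/4$) to define the corrected function $\wt F$, which is exactly the paper's $\wh F$, and the same numerical target $\max_{[0,1]^2}\min\{\wt F,G_{2/5}\}<0.985=\tfrac{197}{200}$; the paper even confirms that no re-optimisation of $\mu$ is needed.

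One point needs correcting, though: you phrase the two constraints as a disjunction (\enquote{there are two possibilities... If the process halts because $\ab X\le 1$... Otherwise the process outputs the red book}), which is not the actual logic. When the algorithm terminates without a monochromatic $K_k$, it terminates \emph{because} $\ab X\le 1$, \emph{and} at that same moment the pair $(A,Y)$ is a red book to which \cref{lem:book to clique} applies. Both constraints $C-o(1)\le G_{2/5}(x,y)$ and $C-o(1)\le\wt F(x,y)$ therefore hold \emph{simultaneously}; that conjunction is precisely what licenses the bound $C-o(1)\le\min\{\wt F,G_{2/5}\}$ on which the numerics rest. Read as a genuine case split, your argument would only yield $C-o(1)\le\max\{\wt F,G_{2/5}\}$, which is useless; fortunately the conclusion you wrote down is the correct conjunctive one, so this is a mis-statement rather than a different proof, but it should be fixed.
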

\begin{proof}
	Let $C$ be a constant that we will optimize later. Let $N=2^{(1+C)k}$, and fix a two-coloring of $E(K_N)$, which we assume for contradiction has no monochromatic $K_k$. We run \cref{alg:book v2} with $\mu=\frac 25$. Thanks to \cref{thm:off diagonal exp} (plus \cref{thm:ES binomial}), we know that
	\[
		r(k-t,k) \leq 
		\begin{cases}
			\displaystyle \binom{2k-t}{k-t}&\text{if }t < \frac 34 k,\\
			\displaystyle 2^{-\frac 29 (k-t) +o(k)}\binom{2k-t}{k-t} &\text{if } t \geq \frac 34 k.
		\end{cases}
	\]
	Recall that we obtain a contradiction if $\ab Y \geq r(k-t,k)$ at the end of the process, hence we may assume that $\ab Y < r(k-t,k)$. Combining this with \cref{lem:Y size}\footnote{We are back to the diagonal setting, so we may assume that $p_{\text{initial}} \geq \frac 12$. Therefore \cref{lem:p never drops,lem:Y size} are again valid.}, we see that we get a contradiction if
	\begin{align*}
		C-o(1) &\leq -1+(x+y) + \frac 1k \log_2 r(k-t,k)\\
		&\leq \begin{cases}
			-1+(x+y)+(2-x)H(\frac{1-x}{2-x})&\text{if }x < \frac 34,\\
			-1+(x+y)-\frac 29(1-x) + (2-x)H(\frac{1-x}{2-x})&\text{if }x \geq \frac 34
		\end{cases}\\
		&=F(x,y) - \frac 29(1-x) \bm 1_{x\geq \frac 34}\\
		&\eqqcolon \wh F(x,y),
	\end{align*}
	where $\bm 1_{x\geq \frac 34}$ denotes the indicator function for the event $x \geq \frac 34$.
	In particular, it suffices for us to prove that $\min \{\wh F(x,y), G_{\frac 25}(x,y)\} \leq 1-\delta$ for all $x,y \in [0,1]$, where $\delta>0$ is a constant that will end up in the exponent in $N$. 
	
	This indeed works! Here are the plots of where $\wh F$ and $G_{\frac 25}$ are greater than $1$; the second plot is just zoomed in to show the ``dangerous area'', where the two regions no longer intersect.
	\begin{center}
		\includegraphics[height=7cm]{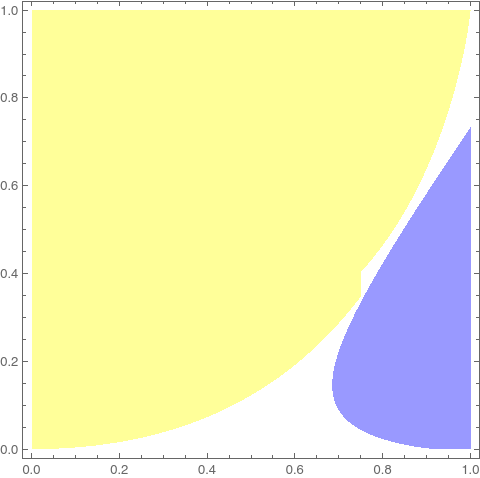}
		\includegraphics[height=7cm]{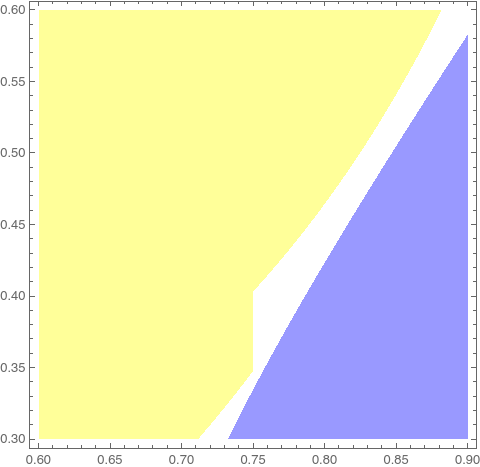}
	\end{center}
	In fact, one can check that $\max_{x,y \in [0,1]} \min \{\wh F(x,y), G_{\frac 25}(x,y)\}<0.985$. Therefore, we obtain a contradiction if we set $C= 0.985 = 1-\frac{3}{200}$, proving that $r(k) \leq 2^{(2-\frac 3{200}+o(1))k}$, as claimed.
\end{proof}

\section{The symmetric book algorithm}\label{sec:symmetric alg}
In this section, we describe the new proof that $r(k) \leq (4-\delta)^k$, due to \textcite{2410.17197}. The main new ingredient in their proof is a lemma about the self-correlation of probability distributions in high-dimensional Euclidean space; this lemma can be used to introduce a new kind of density-boost step, which is much simpler to analyze. As such, their proof is also more conceptual, and does not rely on the verification of complicated numerical inequalities. In this section, we discuss their strategy and prove their bound on $r(k)$, while deferring the geometry, and hence the proof of the key lemma, to \cref{sec:geometry}.

\textcite{2410.17197} introduce a modified book algorithm, which removes the asymmetry that was inherent to \cref{alg:book v2}. Roughly speaking, the fact that the colors play symmetric roles is their new approach is the key reason why this argument generalizes to more than two colors, whereas the earlier one of \cite{2303.09521} did not. However, for simplicity, we will continue to only work with two colors, as before. 

\subsection{The steps of the symmetric book algorithm}
In the symmetric book algorithm of \textcite{2410.17197}, we maintain five disjoint sets $A,B,X,Y,Z$, such that $(A,X\cup Y)$ is a red book and $(B,X \cup Z)$ is a blue book. Thus, the only difference from the earlier setup is the new set $Z$, which is a blue analogue of $Y$.
\begin{center}
	\begin{tikzpicture}
		\draw[fill=red, very thick] (0,0) circle[radius=.5] node[] {$A$}; 
		\draw[fill=blue!60!ProcessBlue, very thick] (0,-6) circle[radius=.5] node[] {$B$}; 
		\draw[very thick, fill=white] (-2,-3) ellipse[x radius=1, y radius=.5] node {$X$}; 
		\scoped[on background layer] \fill[red] (-3,-3) -- (0,0) -- (-1.2,-3) -- cycle;
		\scoped[on background layer] \fill[blue!60!ProcessBlue] (-3,-3) -- (0,-6) -- (-1.2,-3) -- cycle;
		\draw[very thick, fill=white] (2,-2) ellipse[x radius=1, y radius=.5] node {$Y$};
		\draw[very thick, fill=white] (2,-4) ellipse[x radius=1, y radius=.5] node {$Z$};
		\scoped[on background layer] \fill[red] (3,-2) -- (0,0) -- (1.2,-2) -- cycle;
		\scoped[on background layer] \fill[blue!60!ProcessBlue] (3,-4) -- (0,-6) -- (1.2,-4) -- cycle;
	\end{tikzpicture}
\end{center}
We initialize the process with $A = B = \varnothing$, and $X \sqcup Y \sqcup Z$ an arbitrary equitable partition of $V(K_N)$. As before, the two main types of steps will be red and blue steps, wherein we move a vertex $v$ from $X$ to $A$ (resp.\ $B$), and shrink $X$ and $Y$ (resp.\ $X$ and $Z$) to their intersection with the red (resp.\ blue) neighborhood of $v$. 

In our prior analysis of the book algorithm, we had to track four parameters: the sizes of $A,B$, and $X$, and the red density between $X$ and $Y$. In this new setup, we will naturally also track $\ab Z$ and the blue density between $X$ and $Z$, which controls how much $Z$ shrinks every time we do a blue step. Let us denote the red and blue densities, respectively, as
\[
	p_R \coloneqq d_R(X,Y) = \frac{e_R(X,Y)}{\ab X \ab Y} \qquad \text{ and } \qquad p_B \coloneqq d_B(X,Z) = \frac{e_B(X,Z)}{\ab X \ab Z}.
\]
In analogy with \cref{ass:p_0,ass:degree regular}, we make the following assumptions about the red and blue densities. As before, one can remove these assumptions by working harder, but by imposing them we can simplify the exposition and focus on the key new ideas.
\begin{assumption}\label{ass:left regular}
	At every step of the process, every vertex in $X$ has exactly $p_R \ab Y$ red neighbors in $Y$ and exactly $p_B \ab Z$ blue neighbors in $Z$. 
\end{assumption}
\begin{assumption}\label{ass:p_0 both colors}
	At the beginning of the process, we have $p_R \geq \frac 12$ and $p_B \geq \frac 12$. 
\end{assumption}
Note that \cref{ass:left regular} is a weaker assumption than \cref{ass:degree regular}, since we do not assume anything about the degrees of vertices in $Y$ or $Z$. In our earlier analysis of the book algorithm, such an assumption was necessary in order to prove \eqref{eq:density-boost density}, the basic inequality governing the contribution of density-boost steps. In what follows, we will not need this inequality, or indeed any of the earlier analysis of density-boost steps. In its place, we will use the following key lemma, whose proof we defer to \cref{sec:geometry}.
\begin{lemma}[Refinement lemma]\label{lem:refinement}
	Let $X,Y,Z$ be disjoint sets of vertices in a coloring of $E(K_N)$, and let $\alpha_R,\alpha_B \in (0,1)$ be parameters. There exists a vertex $v \in X$, a subset $X' \subseteq X$, and a real number $\kappa\geq 0$ with the following properties.
	\begin{lemenum}
		\item \textbf{$X'$ is not much smaller than $X$:} We have $\ab{X'} \geq c 2^{-C\kappa} \ab X$, where $C = 6$ and $c = 1/8$ are constants.\label{it:X' size}
		\item \textbf{No density drop in either color:} Letting $Y' = Y \cap N_R(v)$ and $Z' = Z \cap N_B(v)$, we have that
		\[
			d_R(X',Y') \geq d_R(X,Y) - \alpha_R \qquad \text{ and } \qquad d_B(X',Z') \geq d_B(X,Z) - \alpha_B.
		\]
		\label{it:no density drop}
		
		\item \textbf{Density boost in one color:} We have that
		\[
			d_R(X',Y') \geq d_R(X,Y) + (\kappa^2-1) \alpha_R \qquad\text{ or } \qquad d_B(X',Z') \geq d_B(X,Z) + (\kappa^2-1)\alpha_B.
		\]
		\label{it:density boost}
	\end{lemenum}
\end{lemma}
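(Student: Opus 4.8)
The plan is to reduce the statement to a geometric fact about self-correlation of vectors in a high-dimensional Hilbert space, and it is this geometric fact — the content of \cref{sec:geometry} — that does the real work. The first move is to turn densities into inner products. For each $v \in X$, let $\rho(v) \in \R^{Y}$ be $\ab{Y}^{-1/2}$ times the indicator vector of $N_R(v)\cap Y$, and let $\beta(v) \in \R^{Z}$ be $\ab{Z}^{-1/2}$ times the indicator of $N_B(v)\cap Z$; write $\phi(v)=(\rho(v),\beta(v))$ in $H=\R^{Y}\oplus\R^{Z}$. Then $\ab{\rho(v)}^{2}$ is the fraction of $Y$ lying in $N_R(v)$, the quantity $\inn{\rho(u),\rho(v)}$ is the fraction of $Y$ that is a common red neighbour of $u$ and $v$, and for every $X'\subseteq X$ one has the identities
\[
	d_R(X', N_R(v)\cap Y)=\frac{\inn{\E_{u\in X'}\rho(u),\ \rho(v)}}{\ab{\rho(v)}^{2}},\qquad d_B(X', N_B(v)\cap Z)=\frac{\inn{\E_{u\in X'}\beta(u),\ \beta(v)}}{\ab{\beta(v)}^{2}},
\]
while the baseline densities are $d_R(X,Y)=\E_{v\in X}\ab{\rho(v)}^{2}$ and $d_B(X,Z)=\E_{v\in X}\ab{\beta(v)}^{2}$. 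Thus \cref{it:no density drop,it:density boost} become statements comparing, block by block, the bilinear form $\inn{\E_{u\in X'}\phi(u),\phi(v)}$ against these baselines. Before proceeding one performs a cleaning step, passing to a subset of $X$ of size at least $\ab{X}/8$ on which the relevant degrees are not too degenerate; this both accounts for the constant $c$ in \cref{it:X' size} and guarantees that the denominators above are bounded below, so that an additive change in an inner product corresponds to a comparable additive change in the density.

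The heart of the matter is the geometric lemma of \cref{sec:geometry}, which I would apply to the family of (essentially unit) vectors $\{\phi(u)\}_{u\in X}$ in $H$, with average $\bar\phi$. It produces a \emph{pivot} $v\in X$, a subset $X'$ — a superlevel set of $u\mapsto\inn{\phi(u),\phi(v)}$, suitably re-centred — and a real $\kappa\geq 0$ such that averaging over $X'$ (i) costs at most a factor roughly $2^{-6\kappa}$ in mass, (ii) decreases neither the $\R^{Y}$-component nor the $\R^{Z}$-component of the correlation with $\phi(v)$ by more than $\alpha_R$, respectively $\alpha_B$, and (iii) increases at least one of these two components by $(\kappa^{2}-1)\alpha$. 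The point is that $\kappa$ is not prescribed in advance: it is read off from the configuration, essentially as the number of ``standard deviations'' by which the chosen pivot's self-correlation exceeds the generic correlation. The asymmetry between the quadratic gain $\kappa^{2}$ in (iii) and the merely linear exponent $\kappa$ in the mass bound reflects that a density increment behaves like a squared projection length — a second moment — whereas the price paid in $\ab{X'}$ is governed by the corresponding radius, which scales linearly in $\kappa$; turning this heuristic into the stated inequalities, with absolute constants, is exactly what \cref{sec:geometry} accomplishes.

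The main obstacle is the coupling of the two colours. A pivot $v$ that is ideal for boosting $d_R(X',Y')$ may have very few blue neighbours in $Z$, which would collapse $d_B(X',Z')$ and violate \cref{it:no density drop}, so one cannot optimise the two colours separately. The geometric selection must therefore choose $v$ and $X'$ so that the correlation with $\phi(v)$ drops in \emph{neither} block while a genuine boost appears in whichever block carries the larger ``spread'' — and the spread available in the $\R^{Y}$- and $\R^{Z}$-components can be wildly different. Making this uniform over the two blocks, with the constants $C=6$ and $c=1/8$, is precisely the delicate part; a secondary and more routine difficulty is controlling the density-to-inner-product conversion near degenerate configurations, which is what the initial cleaning step is there to handle.
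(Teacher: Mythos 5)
Your high-level plan — encode densities as inner products, then invoke the geometric lemma of \cref{sec:geometry} — is the right one, and matches the paper's route. But the encoding you propose will not deliver the statement as written, and the role you assign to the constant $c$ is not where it actually comes from.

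First, the vectors. You take $\rho(v)$ to be $\lvert Y\rvert^{-1/2}$ times the raw indicator of $N_R(v)\cap Y$. The paper instead uses the \emph{centered and $\alpha$-scaled} vectors $\sigma_Y(v)=\bigl(\bm 1_Y(v)-p_R\bm 1\bigr)/\sqrt{\alpha_R p_R\lvert Y\rvert}$ (and analogously $\sigma_Z$). The centering is what makes $\inn{\sigma_Y(v),\sigma_Y(w)}$ measure the \emph{deviation} of $\lvert N_Y(v)\cap N_Y(w)\rvert$ from its ``random'' value $p_R^2\lvert Y\rvert$, and the $\sqrt{\alpha_R}$ in the denominator is exactly what converts the fixed thresholds $-1$ and $\kappa^2-1$ produced by \cref{lem:geometric} into the $\alpha_R$- and $(\kappa^2-1)\alpha_R$-thresholds in the lemma. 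The paper emphasizes that \cref{lem:geometric} is \emph{not} scale-invariant, so this choice of scaling is not cosmetic — with your uncentered, un-$\alpha$-scaled $\rho(v)$, the inner-product thresholds do not translate into the required density thresholds, and your appeal to a cleaning step to ``bound denominators below'' does not repair this; the $\alpha_R$-dependence is structural, not a constant-factor issue. Also needed here is \cref{ass:left regular}, which makes the centering exact (every $v$ has precisely $p_R\lvert Y\rvert$ red neighbours in $Y$, so $\inn{\bm 1_Y(v),\bm 1}=p_R\lvert Y\rvert$) and gives $\lvert Y'\rvert=p_R\lvert Y\rvert$ in the final density computation; your sketch does not mention it.

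Second, the constant $c=1/8$. You attribute it to a cleaning step that passes to a subset of $X$ of size $\geq\lvert X\rvert/8$. In the paper's proof there is no such step: under \cref{ass:left regular} no cleaning is needed, and $c=1/8$ is produced entirely inside the probabilistic argument (the $1/4$ in the bound $\Pr(\{\bm M\geq\kappa^2-1\}\cap\cE)\geq e^{-4\kappa}/4$, halved to $1/8$ by the union bound over the two colours). If a preliminary cleaning consumed the whole $1/8$ budget, the geometric argument would have no constant left for itself, so your accounting does not close. Relatedly, $X'$ is not a superlevel set of a single map $u\mapsto\inn{\phi(u),\phi(v)}$; it is the set of $w\in X$ for which \emph{both} intersection conditions (red and blue) hold simultaneously, and the passage from a random-pair statement to a fixed pivot $v$ with such an $X'$ is by the law of total probability applied to \cref{lem:probabilistic}, not by re-centring a superlevel set. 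These are not presentational quibbles: as written, your reduction does not produce the conclusion of \cref{lem:refinement} with the stated constants.
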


In the symmetric book algorithm, we apply \cref{lem:refinement} at every iteration, before deciding whether to take a red, blue, or density-boost step. 
As before, the quantities $\alpha_R,\alpha_B$ determine how much we are willing to lose on the red and blue density, respectively. As such, we see that \cref{it:no density drop} is extremely helpful, and appears to completely eliminate the need for density-boost steps: every time we take a red or a blue step, \cref{it:no density drop} guarantees that the relevant density does not drop by more than our specified amount of $\alpha_R$ or $\alpha_B$. 

However, we cannot entirely dispense with density-boost steps. Indeed, every time we apply \cref{lem:refinement}, we shrink $X$ to $X'$, and we have little control over how much smaller $X'$ is than $X$: note that, crucially, $\kappa$ is an \emph{output} of \cref{lem:refinement}, and not an input, so it may be that $X'$ is much smaller than $X$. This is where \cref{it:density boost} comes in: in case $\kappa$ is very large, so that $X'$ is much smaller than $X$, we gain a great deal on the density in one of the two colors. We can thus perform a new kind of density-boost step, replacing $X$ by $X'$ and one of $Y,Z$ by $Y',Z'$. As we will see, the trade-offs in this density-boost step work out in our favor; it is crucial here that $X$ shrinks by a factor of $\exp(-O(\kappa))$, but that the density increases by an amount proportional to $\kappa^2$. Because we only do such density-boost steps when $\kappa$ is large, the amount we gain on the density is substantially larger than the amount we lose on $\ab X$.

One final difference between the symmetric book algorithm and the book algorithm studied in \cref{sec:book alg,sec:rescue} is the stopping condition. We will fix some integers $t,m$ satisfying the assumptions of \cref{lem:book to clique}, and our goal is to produce a monochromatic $B_{t,m}$. As such, we will stop the process if $\ab A \geq t$ or $\ab B \geq t$, and we aim to prove that at the end of the process, we still have $\ab Y \geq m$ and $\ab Z \geq m$. When the process terminates, we have thus found our monochromatic $B_{t,m}$. Note that this stopping condition guarantees that we never do more than $t$ red steps or more than $t$ blue steps.

The two things that remain are to define $\alpha_R, \alpha_B$, and to specify the cutoff for $\kappa$ that determines whether we do a density-boost step or a red or blue step. For the former, we use the same definition as before, only adjusted to the fact that we now do at most $t$ red or blue steps, namely we set $\varepsilon = t^{-1/4}$ and define
\begin{equation}\label{eq:symmetric alpha def}
	\alpha(p) \coloneqq
	\begin{cases}
		\frac{\varepsilon}{t} & \text{if } p \leq \frac 12+\frac 1 t,\\
		\varepsilon(p-\frac 12)&\text{otherwise,}
	\end{cases}
\end{equation}
and then set $\alpha_R \coloneqq \alpha(p_R), \alpha_B \coloneqq \alpha(p_B)$. Finally, the cutoff $\kappa_{\mathrm{cutoff}}$ for $\kappa$ will be a fixed constant.

\subsection{Formal definition of the symmetric book algorithm}
We are now ready to define the symmetric book algorithm of \textcite{2410.17197}. 
We begin by fixing various parameters. First, as in \cref{lem:refinement}, we set $C=6$ and $c=1/8$. We then define
\begin{equation}\label{eq:eta and t def}
	\eta \coloneqq \frac{1}{8000} \qquad \text{ and } \qquad t \coloneqq \eta k,
\end{equation}
and set $\varepsilon = t^{-1/4}$, analogously to our choice of $\varepsilon$ in \cref{sec:book alg}.
Finally, we set
\begin{equation}\label{eq:kappa cutoff def}
	\kappa_{\mathrm{cutoff}} \coloneqq 400=\sqrt{\frac{20}\eta}.
\end{equation}
We remark that in the proof, the only properties we will use of these constants are that
\[
	\kappa_{\mathrm{cutoff}} \geq 8C, \quad \kappa_{\mathrm{cutoff}}^2 \geq 8 \log_2 \frac 1c, \quad 2\left( \log_2 \frac 1c + C\kappa_{\mathrm{cutoff}} \right) +5 < \frac 1 \eta, \quad \text{and}\quad\kappa_{\mathrm{cutoff}}^2 \geq \frac{20}{\eta}.
\]
Clearly any sufficiently large $\kappa_{\mathrm{cutoff}}$ will satisfy the first two constraints, and additionally for any sufficiently large $\kappa_{\mathrm{cutoff}}$ we can pick an appropriate $\eta$ to satisfy the latter two. The final upper bound on $r(k)$ that we will prove (see \cref{thm:balister}) is
$r(k) \leq 2^{(2-\eta^2/10)k}$,
hence we are interested in choosing $\eta$ as large as possible while satisfying the constraints above. However, we remark that no real effort was made to optimize any of the constants.

The symmetric book algorithm is then defined as follows, initialized with $A = B = \varnothing$ and $X \sqcup Y \sqcup Z$ an arbitrary partition of $V(K_N)$ into three sets of size\dangerfoot{} $N/3$.
\begin{algorithm}{Symmetric book algorithm}{symmetric}
	\fixind
	\begin{enumerate}
		\item If $X \leq 1, \ab A \geq t$, or $\ab B \geq t$, stop the process.\label{it:check sizes symm}
		\item Let $p_R = d_R(X,Y)$ and $p_B = d_B(X,Z)$ be the current red and blue densities. Define $\alpha_R = \alpha(p_R)$ and $\alpha_B = \alpha(p_B)$ according to \eqref{eq:symmetric alpha def}.
		\item Perform a \emph{refinement step}: apply \cref{lem:refinement} to obtain a vertex $v \in X$, a set $X' \subseteq X$, and a number $\kappa \geq 0$. Let $Y' = Y \cap N_R(v)$ and $Z' = Z \cap N_B(v)$.
		\item If $\kappa \geq \kappa_{\mathrm{cutoff}}$, perform a \emph{density-boost step} in one of the two colors.\label{it:large kappa}
		\begin{enumerate}
			\item If $d_R(X',Y') \geq (\kappa^2-1) p_R$, perform a \emph{red density-boost step}, by updating
			\[
				A \to A, \qquad B \to B, \qquad X \to X', \qquad Y \to Y', \qquad Z \to Z,
			\]
			and return to step \ref{it:check sizes symm}.
			\item If $d_B(X',Z') \geq (\kappa^2-1) p_B$, perform a \emph{blue density-boost step}, by updating
			\[
				A \to A, \qquad B \to B, \qquad X \to X', \qquad Y \to Y, \qquad Z \to Z',
			\]
			and return to step \ref{it:check sizes symm}.
		\end{enumerate}
		
		\item If $\kappa < \kappa_{\mathrm{cutoff}}$, check whether $v$ has at least $\frac 12 \ab{X'}$ red neighbors in $X'$.
		\begin{enumerate}
			\item If yes, perform a \emph{red step}, by updating
			\[
				A \to A \cup \{v\}, \qquad B \to B, \qquad X \to X' \cap N_R(v), \qquad Y \to Y', \qquad Z \to Z,
			\]
			and return to step \ref{it:check sizes symm}.
			
			\item If not, then $v$ has at least\dangerfoot{} $\frac 12 \ab{X'}$ blue neighbors in $X'$. In this case, perform a \emph{blue step}, by updating
			\[
				A \to A, \qquad B \to B \cup \{v\}, \qquad X \to X' \cap N_B(v), \qquad Y \to Y, \qquad Z \to Z',
			\]
			and return to step \ref{it:check sizes symm}.
		\end{enumerate}
	\end{enumerate}
\end{algorithm}
Note that by \cref{ass:left regular}, we have that $\ab{Y'}=p_R \ab Y$ and $\ab{Z'}=p_B \ab Z$. Additionally, whenever we perform a red step or a red density-boost step, we shrink $X$ to a subset and do not change $Z$, hence we do not affect $p_B$; similarly, we do not affect $p_R$ when doing a red step or red density-boost step. Using these facts, we can fill out \cref{table:symm book}, which records how the key parameters change during the execution of \cref{alg:symmetric}.
\begin{table}[ht!]
	\begin{center}
		\tabulinesep=1.2mm
		\begin{tabu}{|c||c|c|c|c|c|c|c|}
			\hline
			&$\ab A$ & $\ab B$ & $\ab X$ & $\ab Y$ & $\ab Z$ & $p_R$ & $p_B$ \\ \hhline{|=#*{7}{=|}}
			red density-boost & -- & -- & $\times c2^{-C\kappa}$ & $\times p_R$ & -- & $+ (\kappa^2-1)\alpha_R$ & --\\ \hline
			blue density-boost & -- & -- & $\times c2^{-C\kappa}$ & -- & $\times p_B$ & -- & $+ (\kappa^2-1)\alpha_B$\\ \hline
			red step & $+1$ & -- & $\times \frac 12c2^{-C\kappa}$ & $\times p_R$ & -- & $-\alpha_R$ & -- \\ \hline
			blue step & -- & $+1$ & $\times \frac 12c2^{-C\kappa}$ & -- & $\times p_B$ & -- & $-\alpha_B$ \\ \hline
		\end{tabu}
	\end{center}
	\caption{How the various parameters evolve during \cref{alg:symmetric}.
	}
	\label{table:symm book}
\end{table}

\subsection{Analysis of the symmetric book algorithm}
We now collect various lemmas about how the development of the parameters throughout the execution of \cref{alg:symmetric}. Let $s_R$ and $s_B$ denote the total number of red and blue density-boost steps done during the process. Recall that by the stopping condition in step \ref{it:check sizes symm}, we do at most $t$ red and at most $t$ blue steps. 

The first two lemmas are proved identically to \cref{lem:p never drops,lem:Y size}, respectively.
\begin{lemma}
	We have $p_R \geq \frac 12 - \varepsilon$ and $p_B \geq \frac 12- \varepsilon$ throughout the entire process.
\end{lemma}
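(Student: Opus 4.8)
The plan is to follow the proof of \cref{lem:p never drops} essentially verbatim, now tracking the two densities $p_R$ and $p_B$ separately; by the symmetry between the colors it is enough to prove the bound for $p_R$. The structural input is read directly off \cref{table:symm book}: the value of $p_R$ is left unchanged by every blue step and every blue density-boost step; it can only \emph{increase} during a red density-boost step, since such a step is performed only when $\kappa \geq \kappa_{\mathrm{cutoff}} \geq 400$, so $\kappa^2 - 1 \geq 0$ and the increment $(\kappa^2-1)\alpha_R$ is nonnegative; and it decreases by at most $\alpha_R = \alpha(p_R)$ during a red step. Thus the only mechanism that can lower $p_R$ is a red step, and by the stopping condition in step \ref{it:check sizes symm} there are at most $t$ of those in the whole process.

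First I would record the base case: by \cref{ass:p_0 both colors} we have $p_R \geq \frac12$ at initialization. Next I would analyze a single red step, exactly as in \cref{lem:p never drops}. If $p_R > \frac12 + \frac1t$ before the step, then $\alpha(p_R) = \varepsilon(p_R - \frac12)$ and afterwards $p_R$ is at least $p_R - \varepsilon(p_R-\frac12) = \frac12 + (1-\varepsilon)(p_R - \frac12) \geq \frac12$, so a red step from this regime cannot push $p_R$ below $\frac12$. If instead $p_R \leq \frac12 + \frac1t$ before the step, then $\alpha(p_R) = \frac{\varepsilon}{t}$, so the step lowers $p_R$ by at most $\frac{\varepsilon}{t}$.

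Combining these observations: the only red steps that can bring $p_R$ below $\frac12$ are those taken while $p_R \leq \frac12 + \frac1t$, and each such step costs at most $\frac{\varepsilon}{t}$; since there are at most $t$ red steps in total (and since no other step decreases $p_R$), the cumulative loss below the initial value is at most $t \cdot \frac{\varepsilon}{t} = \varepsilon$. Hence $p_R \geq \frac12 - \varepsilon$ at every point of the process, and the identical argument with the colors interchanged gives $p_B \geq \frac12 - \varepsilon$.

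The only mildly delicate point, which I would spell out carefully in a full write-up, is the bookkeeping when $p_R$ moves in and out of the regime $p_R \leq \frac12 + \frac1t$, interleaved with density-boost steps that push it back up: one must check that the ``$\frac{\varepsilon}{t}$ per step'' losses genuinely accumulate at most $t$ times irrespective of the order of steps. This is handled exactly as the analogous implicit point in \cref{lem:p never drops}—each red step is charged at most once, and there are at most $t$ of them—so no new idea is required.
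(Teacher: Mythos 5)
Your argument is correct and matches the paper's exactly: the paper disposes of this lemma with a single line saying it is ``proved identically to Lemma~\ref{lem:p never drops},'' and your write-up is precisely that proof, with $k$ replaced by $t$ (the new cap on red steps), the two densities tracked separately via the observations recorded in \cref{table:symm book}, and \cref{ass:p_0 both colors} supplying the base case. The bookkeeping caveat you flag at the end is the same implicit point the paper also leaves unspoken, and your resolution (each red step charged once, at most $t$ of them) is the standard one.
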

\begin{lemma}\label{lem:YZ sizes}
	At the end of the process, we have $\ab Y \geq 2^{-t-s_R-o(k)} N$ and $\ab Z \geq 2^{-t-s_B-o(k)} N$.
\end{lemma}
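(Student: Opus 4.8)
The plan is to follow the proof of \cref{lem:Y size} essentially verbatim, tracking how $\ab Y$ changes under each of the four step types recorded in \cref{table:symm book}. The key observation is that $Y$ is only ever shrunk during red steps and red density-boost steps: a blue step or a blue density-boost step intersects $X$ (and possibly $Z$) with a neighborhood but leaves $Y$ untouched. Moreover, in a red step or a red density-boost step, $Y$ is replaced by $Y' = Y \cap N_R(v)$ for the vertex $v$ produced by the refinement step, and since $v \in X$, \cref{ass:left regular} guarantees that $\ab{Y'} = p_R \ab Y$; that is, $\ab Y$ is multiplied by exactly the value of $p_R$ at the start of that step.

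First I would count the relevant steps: by the stopping condition in step \ref{it:check sizes symm} there are at most $t$ red steps, and by definition there are exactly $s_R$ red density-boost steps, so $\ab Y$ is multiplied by a value of $p_R$ on at most $t + s_R$ occasions. By the preceding lemma (the symmetric counterpart of \cref{lem:p never drops}), each such value of $p_R$ is at least $\tfrac 12 - \varepsilon$. Combining this with the initialization $\ab Y = N/3$ yields
\[
	\ab Y \;\geq\; \left( \tfrac 12 - \varepsilon \right)^{t+s_R} \cdot \frac N3 .
\]
Finally I would absorb the error terms. Writing $\left(\tfrac12-\varepsilon\right)^{t+s_R} = 2^{-(t+s_R)} (1-2\varepsilon)^{t+s_R}$, it remains to check that $(1-2\varepsilon)^{t+s_R} = 2^{-o(k)}$ and that the factor $\tfrac13$ contributes only $2^{-o(k)}$; the latter is immediate, and for the former it suffices that $(t+s_R)\varepsilon = o(k)$. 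This holds because $t = \eta k = O(k)$, because $s_R = O(k)$ (each density-boost step shrinks $\ab X$ by at least the fixed constant factor $c\,2^{-C\kappa_{\mathrm{cutoff}}}$, and $\ab X$ starts at $N/3$ and must remain at least $1$, so $s_R = O(\log N) = O(k)$), and because $\varepsilon = t^{-1/4} = o(1)$. We conclude $\ab Y \geq 2^{-t-s_R-o(k)} N$, and the bound $\ab Z \geq 2^{-t-s_B-o(k)}N$ follows by the identical argument with the roles of the two colors interchanged, using that $Z$ shrinks only during the at most $t$ blue steps and the $s_B$ blue density-boost steps, each time by a factor of $p_B \geq \tfrac12-\varepsilon$.

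There is no genuine obstacle here, since this lemma is a near-mechanical adaptation of \cref{lem:Y size}; the only mildly delicate point is the routine verification that the accumulated multiplicative slack (from the $(1-2\varepsilon)$ factors, the $\tfrac13$ initialization, and the a priori bound on $s_R$) is indeed subexponential in $k$, which it is given that $\varepsilon \to 0$ and the number of relevant steps is $O(k)$.
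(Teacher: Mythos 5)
Your proof is correct and follows exactly the approach the paper intends: the paper simply states that this lemma is ``proved identically to'' \cref{lem:Y size}, and your argument is precisely that adaptation, tracking the factor of $p_R \geq \tfrac12 - \varepsilon$ lost by $|Y|$ at each red step or red density-boost step via \cref{ass:left regular}, and noting that blue-sided steps leave $Y$ untouched. The one point where you go beyond a verbatim transcription — supplying a separate $O(\log N) = O(k)$ bound on $s_R$ — is a sensible precaution, since in \cref{alg:symmetric} the density-boost steps no longer increment $|B|$, so the bound $s_R = O(k)$ does not follow from the stopping condition as it did for the original book algorithm.
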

Now, let $\kappa_{R,i}, \kappa_{B,i}$ denote the value of $\kappa$ used during the $i$th red (resp.\ blue) density-boost step. Additionally, let $\kappa_R, \kappa_B$ denote the average value of these numbers, i.e.\
\[
	\kappa_R \coloneqq \frac{1}{s_R} \sum_{i=1}^{s_R} \kappa_{R,i} \qquad\text{ and } \qquad \kappa_B \coloneqq \frac{1}{s_B} \sum_{i=1}^{s_B} \kappa_{B,i}.
\]
The next lemma is analogous to \cref{lem:X size}.
\begin{lemma}\label{lem:X size symmetric}
	At the end of the process, we have
	\[
		\ab X \geq \left( \frac 12 c2^{-C \kappa_{\mathrm{cutoff}}} \right)^{2t+o(k)} c^{s_R+s_B} 2^{-C(s_R \kappa_R + s_B \kappa_B)}N.
	\]
\end{lemma}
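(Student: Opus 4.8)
The plan is to mirror the proof of \cref{lem:X size} almost verbatim, tracking the multiplicative factor by which $\ab X$ shrinks at each iteration of \cref{alg:symmetric} and reading these factors off \cref{table:symm book}. Recall that the process starts with $\ab X = N/3$, and that each iteration performs exactly one of four steps: a red density-boost step, a blue density-boost step, a red step, or a blue step. According to \cref{table:symm book}, a red or blue density-boost step carried out with parameter $\kappa$ multiplies $\ab X$ by a factor at least $c2^{-C\kappa}$, whereas a red or blue step carried out with parameter $\kappa$ multiplies $\ab X$ by a factor at least $\frac12 c2^{-C\kappa}$.

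The crucial observation I would use is that a red or blue step is only ever performed when $\kappa < \kappa_{\mathrm{cutoff}}$ --- this is precisely the situation in which the branch in step \ref{it:large kappa} of \cref{alg:symmetric} is \emph{not} taken. Hence on every red or blue step the shrinking factor is at least the fixed constant $\frac12 c2^{-C\kappa_{\mathrm{cutoff}}}$. Since the stopping condition of step \ref{it:check sizes symm} ensures that we perform at most $t$ red steps and at most $t$ blue steps, and since $\frac12 c2^{-C\kappa_{\mathrm{cutoff}}} < 1$, I would bound the total contribution of all red and blue steps to $\ab X$ from below by $\bigl(\frac12 c2^{-C\kappa_{\mathrm{cutoff}}}\bigr)^{2t}$; here it matters that the base is less than $1$, so that overestimating the number of such steps by $2t$ only weakens the bound.

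For the density-boost steps I would argue as follows. Writing $\kappa_{R,1},\dots,\kappa_{R,s_R}$ for the values of $\kappa$ used in the red density-boost steps, their combined contribution is exactly $\prod_{i=1}^{s_R} c2^{-C\kappa_{R,i}} = c^{s_R}\,2^{-C\sum_i \kappa_{R,i}} = c^{s_R}\,2^{-Cs_R\kappa_R}$, directly from the definition of $\kappa_R$ as the arithmetic mean of the $\kappa_{R,i}$; note that, in contrast to \cref{lem:X size}, no appeal to the AM--GM inequality is needed here, since the $\kappa_{R,i}$ enter linearly in the exponent. The blue density-boost steps contribute $c^{s_B}\,2^{-Cs_B\kappa_B}$ in the same way. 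Multiplying all of these contributions by the initial value $N/3$, and then absorbing the constant factor $\frac13$, together with the negligible additive rounding errors accumulated over the (at most $O(k)$ many) steps, into the $o(k)$ term appearing in the exponent of the fixed constant $\frac12 c2^{-C\kappa_{\mathrm{cutoff}}}$, yields the claimed inequality.

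I do not expect any serious obstacle here: once \cref{table:symm book} is established, this lemma is pure bookkeeping. The only points requiring genuine care are (i) invoking the algorithm's defining property that $\kappa < \kappa_{\mathrm{cutoff}}$ on precisely the red and blue steps, so that those factors are uniformly bounded below by a constant, and (ii) keeping the direction of the inequalities straight when replacing the numbers of red and blue steps by their common upper bound $t$ (and hence $2t$ in total).
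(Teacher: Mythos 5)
Your proof is correct and takes essentially the same approach as the paper: read the shrinkage factors off \cref{table:symm book}, use $\kappa < \kappa_{\mathrm{cutoff}}$ on red and blue steps to bound those factors uniformly by $\frac12 c2^{-C\kappa_{\mathrm{cutoff}}}$ (and then overcount these steps by $2t$, which is safe since the base is less than $1$), and compute the density-boost contribution exactly via the definition of $\kappa_R, \kappa_B$ as arithmetic means, absorbing the initial $\frac13$ and rounding errors into the $o(k)$ exponent. Your remark that no AM--GM is needed here (unlike in \cref{lem:X size}), because the $\kappa_{R,i}$ enter linearly in the exponent, is a nice observation but does not change the argument.
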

\begin{proof}
	Every red or blue step shrinks $X$ by at most a factor\dangerfoot{} of $\frac 12 c2^{-C \kappa_{\mathrm{cutoff}}}$, since we only do a red or blue step if the current value of $\kappa$ is at most $\kappa_{\mathrm{cutoff}}$. As we do at most $t$ red and at most $t$ blue steps, we obtain the factor of $(\frac 12 c2^{-C \kappa_{\mathrm{cutoff}}})^{2t}$. Similarly, the $i$th red density-boost step shrinks $X$ by at most a factor of $c2^{-C\kappa_{R,i}}$, hence the total contribution of the red density-boost steps is
	\[
		\prod_{i=1}^{s_R} c2^{-C \kappa_{R,i}} = c^{s_R} 2^{-C \sum_{i=1}^{s_R} \kappa_{R,i}} =c^{s_R} 2^{-C s_R \kappa_R},
	\]
	by the definition of $\kappa_R$. Adding in the analogous contribution of the blue density-boost steps yields the claimed result.
\end{proof}

Finally, the following lemma is analogous to \cref{lem:zigzag}, and is proved identically. Indeed, the only difference from \cref{lem:zigzag} is that, rather than saying that the $i$th density-boost step boosts the red density by $\frac{1-\beta_i}{\beta_i}\alpha$, we say that the $i$th red density-boost step increases $p_R$ by at least $(\kappa_{R,i}^2-1)\alpha_R$, and similarly for the blue density-boost steps.
\begin{lemma}\label{lem:symmetric zigzag}
	We have
	\[
		\sum_{i=1}^{s_R} (\kappa_{R,i}^2-1) \leq t+o(k) \qquad \text{ and } \qquad \sum_{i=1}^{s_B} (\kappa_{B,i}^2-1) \leq t+o(k).
	\]
\end{lemma}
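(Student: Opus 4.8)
The plan is to follow the proof of \cref{lem:zigzag} essentially verbatim, tracking the evolution of the red density $p_R$ for the first inequality and of the blue density $p_B$ for the second; by the colour symmetry of \cref{alg:symmetric} it suffices to establish the bound for $p_R$. So the first task is to read off how $p_R=d_R(X,Y)$ changes during each of the four kinds of step. A blue step or a blue density-boost step shrinks $X$ to a subset of itself and leaves $Y$ alone, so by \cref{ass:left regular} every surviving vertex of $X$ still has exactly $p_R\ab Y$ red neighbours in $Y$, and hence $p_R$ is unchanged. A red step restricts to the red neighbourhood of the vertex $v$ supplied by \cref{lem:refinement}, and part \cref{it:no density drop} of that lemma (together with \cref{ass:left regular}) ensures that $p_R$ decreases by at most $\alpha_R$. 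A red density-boost step is performed exactly when $\kappa\geq\kappa_{\mathrm{cutoff}}$ and the red alternative of \cref{it:density boost} holds, and by \cref{table:symm book} it increases $p_R$ by at least $(\kappa_{R,i}^2-1)\alpha_R$ (note $\kappa_{R,i}\geq\kappa_{\mathrm{cutoff}}>1$, so this is a genuine increase). Part \cref{it:density boost} also guarantees that each density-boost step can be labelled red or blue, so $s_R$ and the numbers $\kappa_{R,i}$ are well defined.

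With these four facts in place the argument is identical to that of \cref{lem:zigzag}. Write $q_R\coloneqq p_R-\tfrac12$ and first assume we stay in the regime $p_R\geq\tfrac12+\tfrac1t$, so that $\alpha_R=\varepsilon q_R$ by \eqref{eq:symmetric alpha def}. Then a red step multiplies $q_R$ by at least $1-\varepsilon$, a blue or blue density-boost step fixes $q_R$, and the $i$th red density-boost step multiplies $q_R$ by at least $1+\varepsilon(\kappa_{R,i}^2-1)$. Multiplying these bounds over the whole run, using that there are at most $t$ red steps, and replacing $1+x$ by $e^x$, we get
\[
	\frac{q_{R,\mathrm{final}}}{q_{R,\mathrm{initial}}}\gtrsim\exp\left(\varepsilon\left(-t+\sum_{i=1}^{s_R}(\kappa_{R,i}^2-1)\right)\right).
\]
Since $p_R\leq1$ always we have $q_{R,\mathrm{final}}\leq\tfrac12$, while $q_{R,\mathrm{initial}}\geq\tfrac1t$ in this regime, so the left side is at most $t$; taking logarithms and using $\varepsilon=t^{-1/4}$, $t=\eta k$ gives $\sum_{i=1}^{s_R}(\kappa_{R,i}^2-1)\lesssim t+\tfrac{\log t}{\varepsilon}=t+o(k)$. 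In the complementary regime $p_R<\tfrac12+\tfrac1t$ throughout, $\alpha_R=\tfrac\varepsilon t$ is a constant; summing the additive increments $p_R'-p_R$ over all steps yields $p_{R,\mathrm{final}}-p_{R,\mathrm{initial}}\geq\tfrac\varepsilon t\bigl(-t+\sum_{i=1}^{s_R}(\kappa_{R,i}^2-1)\bigr)$, and since the left side is at most $\tfrac1t$ (because $p_{R,\mathrm{initial}}\geq\tfrac12$ by \cref{ass:p_0 both colors}) we again obtain $\sum_{i=1}^{s_R}(\kappa_{R,i}^2-1)\leq t+\tfrac1\varepsilon=t+o(k)$. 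Finally, as in \cref{lem:zigzag}, a general run is broken into maximal intervals lying in one regime or the other and the two estimates are combined. Running the same argument with $p_B$, $Z$, $\alpha_B$ and the blue density-boost steps gives the second inequality.

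As in \cref{lem:zigzag}, the one genuinely delicate point is the passage from $1+x$ to $e^x$: here $x=\varepsilon(\kappa_{R,i}^2-1)$, and since $\kappa_{R,i}$ is an \emph{output} of \cref{lem:refinement} it can be arbitrarily large, so this approximation is not automatically valid. A rigorous treatment must split off the (necessarily few) density-boost steps with very large $\kappa_{R,i}$ — using that a large value of $\sum_{i}(\kappa_{R,i}^2-1)$ already forces $q_R$ to have grown substantially, which limits how often this can occur — and estimate their contribution by hand before applying the exponential approximation to the rest. The bookkeeping needed to patch together the two density regimes is a further, routine nuisance that introduces no new idea.
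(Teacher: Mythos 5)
Your proof is correct and follows exactly the route the paper intends: the paper's own ``proof'' of this lemma is a one-sentence remark that it is proved identically to \cref{lem:zigzag} after replacing the density boost $\frac{1-\beta_i}{\beta_i}\alpha$ by $(\kappa_{R,i}^2-1)\alpha_R$, and you have faithfully reconstructed that argument, correctly checking that blue and blue density-boost steps leave $p_R$ unchanged (via \cref{ass:left regular}), that red steps drop $p_R$ by at most $\alpha_R$, that red density-boost steps raise it by at least $(\kappa_{R,i}^2-1)\alpha_R$, and then running the two-regime multiplicative/additive analysis with $\varepsilon=t^{-1/4}$ and at most $t$ red steps. You also flag the same $1+x\approx e^x$ caveat that the paper itself flags in its sketch of \cref{lem:zigzag}, so nothing is missing.
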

We remark that in the analysis of the original book algorithm, it was critical to prove \cref{lem:zigzag} with a main term of $t$, rather than a weaker bound such as $2t$. There is enough slack in the present argument so that a much weaker bound than \cref{lem:symmetric zigzag} would suffice (and indeed \textcite[Lemma 4.3]{2410.17197} prove such a weaker bound, with a constant-factor loss).

Using \cref{lem:symmetric zigzag}, we can prove the following bound on the number of density-boost steps and on their total contribution to \cref{lem:X size symmetric}, which will play the same role in the argument as \cref{lem:beta LB}.
\begin{lemma}\label{lem:kappa contribution}
	For sufficiently large $k$, we have
	\[
		s_R \kappa_R \leq \frac{4t}{\kappa_{\mathrm{cutoff}}} \qquad \text { and } \qquad s_B \kappa_B \leq \frac{4t}{\kappa_{\mathrm{cutoff}}}.
	\]
	Additionally,
	\[
		s_R \leq \frac{4t}{\kappa_{\mathrm{cutoff}}^2} \qquad \text{ and } \qquad s_B \leq \frac{4t}{\kappa_{\mathrm{cutoff}}^2}.
	\]
\end{lemma}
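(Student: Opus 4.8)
The plan is to deduce all four inequalities directly from the zig-zag estimate of \cref{lem:symmetric zigzag}, exploiting the single structural fact that a density-boost step is performed only when $\kappa \geq \kappa_{\mathrm{cutoff}}$, so that every $\kappa_{R,i}$ and every $\kappa_{B,i}$ is at least the large constant $\kappa_{\mathrm{cutoff}}=400$ fixed in \eqref{eq:kappa cutoff def}.

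\textbf{Step 1: from $\sum(\kappa^2-1)$ to $\sum\kappa^2$.} First I would observe that $\kappa_{R,i}^2-1 \geq \tfrac12\kappa_{R,i}^2$ for every $i$, since $\kappa_{R,i}^2 \geq \kappa_{\mathrm{cutoff}}^2 \geq 2$; feeding this into \cref{lem:symmetric zigzag} yields $\sum_{i=1}^{s_R}\kappa_{R,i}^2 \leq 2(t+o(k))$, and symmetrically $\sum_{i=1}^{s_B}\kappa_{B,i}^2 \leq 2(t+o(k))$. Since $t=\eta k$ with $\eta$ a fixed constant, the error term $o(k)$ is $o(t)$, so for all sufficiently large $k$ we have $\sum_{i=1}^{s_R}\kappa_{R,i}^2 \leq 4t$ and $\sum_{i=1}^{s_B}\kappa_{B,i}^2 \leq 4t$, with room to spare.

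\textbf{Step 2: extracting the four bounds.} For the count $s_R$, each summand satisfies $\kappa_{R,i}^2 \geq \kappa_{\mathrm{cutoff}}^2$, hence $s_R\kappa_{\mathrm{cutoff}}^2 \leq \sum_{i=1}^{s_R}\kappa_{R,i}^2 \leq 4t$, i.e.\ $s_R \leq 4t/\kappa_{\mathrm{cutoff}}^2$; the same argument with $B$ in place of $R$ gives $s_B \leq 4t/\kappa_{\mathrm{cutoff}}^2$. For the weighted sums, I would use that $\kappa_{R,i} = \kappa_{R,i}^2/\kappa_{R,i} \leq \kappa_{R,i}^2/\kappa_{\mathrm{cutoff}}$ (again because $\kappa_{R,i}\geq\kappa_{\mathrm{cutoff}}$), so that $s_R\kappa_R = \sum_{i=1}^{s_R}\kappa_{R,i} \leq \kappa_{\mathrm{cutoff}}^{-1}\sum_{i=1}^{s_R}\kappa_{R,i}^2 \leq 4t/\kappa_{\mathrm{cutoff}}$, and likewise $s_B\kappa_B \leq 4t/\kappa_{\mathrm{cutoff}}$. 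This establishes all four claimed inequalities.

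\textbf{The main obstacle.} There is essentially no obstacle: the statement is a short arithmetic consequence of \cref{lem:symmetric zigzag}. The only points that require (minimal) care are verifying that the $o(k)$ term in \cref{lem:symmetric zigzag} is genuinely negligible compared with $t=\eta k$ — which is exactly where the hypothesis ``for sufficiently large $k$'' is used — and checking that the passage from $\kappa_{R,i}^2-1$ to $\tfrac12\kappa_{R,i}^2$ is legitimate, which requires only $\kappa_{\mathrm{cutoff}}^2 \geq 2$, amply satisfied by the choice $\kappa_{\mathrm{cutoff}}=400$.
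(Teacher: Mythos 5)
Your proof is correct and follows essentially the same route as the paper's: apply \cref{lem:symmetric zigzag}, use $\kappa_{R,i}\geq\kappa_{\mathrm{cutoff}}\geq\sqrt2$ to pass from $\sum(\kappa_{R,i}^2-1)$ to $\frac12\sum\kappa_{R,i}^2$, absorb the $o(k)=o(t)$ error into a factor of $2$, and then bound $s_R\kappa_R$ and $s_R$ by comparing $\kappa_{R,i}$ and $1$ against $\kappa_{R,i}^2/\kappa_{\mathrm{cutoff}}$ and $\kappa_{R,i}^2/\kappa_{\mathrm{cutoff}}^2$ respectively. The only cosmetic difference is that the paper chains the inequalities directly to $s_R\kappa_R$ and then divides once more by $\kappa_{\mathrm{cutoff}}$ for $s_R$, whereas you first isolate $\sum\kappa_{R,i}^2\leq 4t$ and derive both bounds from it; the content is identical.
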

\begin{proof}
	Recall from \eqref{eq:eta and t def} that $t = \eta k$ for a fixed constant $\eta$. Hence, for sufficiently large $k$, the $t+o(k)$ term in \cref{lem:symmetric zigzag} is at most $2t$. We henceforth assume that $k$ is sufficiently large for this to hold. Next, we recall by step \ref{it:large kappa} of \cref{alg:symmetric} that $\kappa_{R,i} \geq \kappa_{\mathrm{cutoff}}$ for all $i$, and that $\kappa_{\mathrm{cutoff}} \geq \sqrt 2$ by our choice in \eqref{eq:kappa cutoff def}. Therefore,
	\[
		2t \geq \sum_{i=1}^{s_R}(\kappa_{R,i}^2-1) \geq \frac 12 \sum_{i=1}^{s_R} \kappa_{R,i}^2 \geq \frac{\kappa_{\mathrm{cutoff}}}{2} \sum_{i=1}^{s_R} \kappa_{R,i} = \frac{\kappa_{\mathrm{cutoff}}}{2} \cdot (s_R \kappa_R),
	\]
	where we use \cref{lem:symmetric zigzag} in the first inequality, that $x^2-1 \geq \frac 12 x^2$ for $x \geq \sqrt 2$ in the second, that $\kappa_{R,i} \geq \kappa_{\mathrm{cutoff}}$ in the third, and the definition of $\kappa_R$ in the fourth. This, together with the analogous result for blue, yields the first claimed result.

	The second claimed result then follows immediately if we use once more that $\kappa_R \geq \kappa_{\mathrm{cutoff}}$, and similarly for blue.
\end{proof}
Recall from \cref{lem:X size symmetric} that the total contribution of the density-boost steps to $\ab X$ is $c^{s_R + s_B}e^{-C(s_R \kappa_R + s_B \kappa_B)}$. By \cref{lem:kappa contribution}, both of these exponential terms can be bounded as $\exp(O(t/\kappa_{\mathrm{cutoff}}))$. This means that if $\kappa_{\mathrm{cutoff}}$ is sufficiently large in terms of $c$ and $C$---and our choice in \eqref{eq:kappa cutoff def} is indeed sufficiently large---then the contribution of the density-boost steps is negligible compared to the other exponential terms. 

It is here that we see why it is crucial to obtain in \cref{lem:refinement} a density boost which asymptotically dominates $\kappa$. It is thanks to this that the proof of \cref{lem:kappa contribution} gives us a $\kappa_{\mathrm{cutoff}}$ term in the denominator; thanks to this we can make the density-boost steps negligible by a sufficiently large choice of $\kappa_{\mathrm{cutoff}}$. That is, if \cref{lem:refinement} only gave a density-boost of order $\kappa$, this argument would not work. Luckily \cref{lem:refinement} boosts the density by a quadratic quantity in $\kappa$, which makes the whole argument go through.

\subsection{Proof of the upper bound on $r(k)$}
Given our lemmas above on the evolution of the symmetric book algorithm, it is fairly straightforward to prove the following result, which implies that \cref{alg:symmetric} always outputs a sufficiently large monochromatic book.
\begin{lemma}\label{lem:symm alg finds book}
	Let $k$ be sufficiently large, and let
	\[
		N \geq \max \{2^{t+\eta t/4} m, 2^{k}\}
	\]
	for some integer $m$. Then \cref{alg:symmetric} terminates by finding a monochromatic copy of $B_{t,m}$.
\end{lemma}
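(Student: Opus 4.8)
The plan is to trace how \cref{alg:symmetric} can terminate and to check that in every case it halts with a monochromatic $B_{t,m}$ in hand. By the stopping rule in step \ref{it:check sizes symm}, the process ends for one of three reasons: $\ab A \geq t$, $\ab B \geq t$, or $\ab X \leq 1$. (Termination is not an issue: red and blue steps number at most $2t$ altogether, and by \cref{lem:symmetric zigzag} at most $O(t)$ density-boost steps occur.) First I would rule out the third possibility using the hypothesis $N \geq 2^k$; this forces the process to stop with $\ab A \geq t$ or $\ab B \geq t$, and since $A$ and $B$ grow one vertex at a time we in fact have $\ab A = t$ or $\ab B = t$. In those two cases the output is the red book $(A,Y)$ or the blue book $(B,Z)$, and it remains to check that the page-set still has at least $m$ vertices, which is where the hypothesis $N \geq 2^{t+\eta t/4}m$ enters.

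For the first point, suppose the process ends with $\ab X \leq 1$. Then at most $t$ red and $t$ blue steps were performed, together with $s_R$ red and $s_B$ blue density-boost steps, so \cref{lem:X size symmetric} gives
\[
	\ab X \geq \left( \frac 12 c2^{-C \kappa_{\mathrm{cutoff}}} \right)^{2t+o(k)} c^{s_R+s_B}\, 2^{-C(s_R \kappa_R + s_B \kappa_B)}\, N .
\]
Bounding $s_R+s_B \leq 8t/\kappa_{\mathrm{cutoff}}^2$ and $s_R\kappa_R+s_B\kappa_B \leq 8t/\kappa_{\mathrm{cutoff}}$ by \cref{lem:kappa contribution}, taking $\log_2$, dividing by $t$, and using $\log_2 N \geq k = t/\eta$, one gets
\[
	\frac{\log_2 \ab X}{t} \geq \frac 1\eta - 2\left( 1 + \log_2 \frac 1c + C\kappa_{\mathrm{cutoff}} \right) - \frac{8\log_2\frac 1c}{\kappa_{\mathrm{cutoff}}^2} - \frac{8C}{\kappa_{\mathrm{cutoff}}} - o(1).
\]
The constraints $\kappa_{\mathrm{cutoff}} \geq 8C$ and $\kappa_{\mathrm{cutoff}}^2 \geq 8\log_2\frac 1c$ make $\frac{8C}{\kappa_{\mathrm{cutoff}}} \leq 1$ and $\frac{8\log_2\frac 1c}{\kappa_{\mathrm{cutoff}}^2} \leq 1$, and then the constraint $2(\log_2\frac 1c + C\kappa_{\mathrm{cutoff}}) + 5 < \frac 1\eta$ forces the right-hand side to be at least $1 - o(1) > 0$ for $k$ large. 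Hence $\ab X > 1$, a contradiction, so the process must stop with $\ab A = t$ or $\ab B = t$.

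For the second point, by the color symmetry assume the process stopped with $\ab A = t$, so that $(A,Y)$ is a red book with a spine of size $t$; I must show $\ab Y \geq m$. \cref{lem:YZ sizes} gives $\ab Y \geq 2^{-t-s_R-o(k)}N$, and substituting $N \geq 2^{t+\eta t/4}m$ yields $\ab Y \geq 2^{\eta t/4 - s_R - o(k)} m$. By \cref{lem:kappa contribution} and the choice $\kappa_{\mathrm{cutoff}}^2 \geq 20/\eta$ we have $s_R \leq 4t/\kappa_{\mathrm{cutoff}}^2 \leq \eta t/5$, so the exponent is at least $\eta t/20 - o(k) > 0$ for $k$ large (recall $t = \eta k$); thus $\ab Y \geq m$ and $(A,Y)$ contains a red $B_{t,m}$. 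The case $\ab B = t$ is identical, with $Z$, $s_B$, $p_B$ replacing $Y$, $s_R$, $p_R$, and produces a blue $B_{t,m}$.

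No new idea is needed here; all the substance sits in the earlier lemmas — above all the refinement lemma \cref{lem:refinement}, whose density boost is quadratic rather than linear in $\kappa$, which is exactly what lets \cref{lem:kappa contribution} place a $\kappa_{\mathrm{cutoff}}$ (not a constant) in the denominator. The only thing requiring care, and the main obstacle, is the constant-chasing: verifying that the reservoir $N \geq 2^k$ survives the worst-case shrinkage of $X$ — the fixed per-step factor $\frac 12 c2^{-C\kappa_{\mathrm{cutoff}}}$ together with the negligible aggregate density-boost contribution bounded by \cref{lem:kappa contribution} — which is precisely the content of the displayed numerical constraints on $\kappa_{\mathrm{cutoff}}$ and $\eta$.
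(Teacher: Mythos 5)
Your proof is correct and follows essentially the same approach as the paper: show $\ab X > 1$ at the end via \cref{lem:X size symmetric} and \cref{lem:kappa contribution}, then show the surviving page-set has at least $m$ vertices via \cref{lem:YZ sizes} and \cref{lem:kappa contribution}. The only stylistic difference is that you verify the $\ab X > 1$ bound using the abstract constraint inequalities on $\kappa_{\mathrm{cutoff}}$ and $\eta$ that the paper lists after \eqref{eq:kappa cutoff def}, whereas the paper plugs in the explicit numerical values; your version is arguably a bit cleaner and makes clear exactly which properties of the constants are used.
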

\begin{proof}
	It suffices to prove that at the end of the process, we have $\ab X>1$ and $\ab Y, \ab Z \geq m$. Indeed, if this is the case, then \cref{alg:symmetric} can only terminate when one of $A$ or $B$ has size $t$, at which point it forms a monochromatic copy of $B_{t,m}$ with $Y$ or $Z$, respectively.

	For the second claim, we have by \cref{lem:YZ sizes} that $\ab Y\geq 2^{-t-s_R-o(k)}N$. By \cref{lem:kappa contribution}, we have that
	\[
		s_R \leq \frac{4t}{\kappa_{\mathrm{cutoff}^2}} = \frac{\eta t}{5},
	\]
	since $\kappa_{\mathrm{cutoff}}^2= 20/\eta$ by \eqref{eq:kappa cutoff def}. Therefore, for $k$ sufficiently large we have that $s_R+o(k) \leq \eta t/4$, hence $\ab Y \geq 2^{-t-\eta t/4}N \geq m$. The proof that $\ab Z \geq m$ at the end of the process proceeds identically.

	It remains to prove the lower bound on $\ab X$, for which we use \cref{lem:X size symmetric}. We estimate each of the quantities appearing in it in turn. First, recalling the definitions of $c$, $C$, and $\kappa_{\mathrm{cutoff}}$, we see that
	\[
		\frac 12 c 2^{-C \kappa_{\mathrm{cutoff}}} = 2^{-4} \cdot 2^{-6 \cdot 400} = 2^{-2404}.
	\]
	Therefore, for $k$ sufficiently large, we have that 
	\[
		\left(\frac 12 c 2^{-C \kappa_{\mathrm{cutoff}}}\right)^{2t+o(k)} \geq 2^{-6000t}.
	\]
	Next, by \cref{lem:kappa contribution}, we have that
	\[
		c^{s_R+s_B} = 2^{-3(s_R+s_B)} \geq 2^{-24t/\kappa_{\mathrm{cutoff}}^2}\geq 2^{-t},
	\]
	since $24/\kappa_{\mathrm{cutoff}}^2 = 24/400^2 \leq 1$. Finally, again by \cref{lem:kappa contribution}, we have that
	\[
		C(s_R \kappa_R+s_B \kappa_B) \leq \frac{8Ct}{\kappa_{\mathrm{cutoff}}} = \frac{48}{400}t \leq t.
	\]
	Plugging this all into \cref{lem:X size symmetric}, we conclude that at the end of the process,
	\[
		\ab X \geq \left( \frac 12 c2^{-C \kappa_{\mathrm{cutoff}}} \right)^{2t+o(k)} c^{s_R+s_B} 2^{-C(s_R \kappa_R + s_B \kappa_B)}N \geq 2^{-6000t-t-t} N > 2^{-t/\eta}N = 2^{-k}N,
	\]
	since $6002 < 1/\eta$ and $k = \eta t$. Finally, by our assumption that $N \geq 2^k$, we conclude that $\ab X >1$ at the end of the process, as claimed.
\end{proof}
Finally, the upper bound on diagonal Ramsey numbers is an immediate consequence of \cref{lem:symm alg finds book,lem:book to clique}. 
\begin{theorem}[\cite{2410.17197}]\label{thm:balister}
	If $k$ is sufficiently large, then 
	\[r(k) \leq 2^{(2-\eta^2/10)k}.\]
\end{theorem}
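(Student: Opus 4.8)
The plan is to obtain \cref{thm:balister} as an immediate corollary of \cref{lem:symm alg finds book} and \cref{lem:book to clique} --- which is precisely what \cref{lem:symm alg finds book} was built to permit. Set $N = 2^{(2-\eta^2/10)k}$ and suppose for contradiction that some two-coloring of $E(K_N)$ contains no monochromatic $K_k$. We run \cref{alg:symmetric} with $t = \eta k$ as fixed in \eqref{eq:eta and t def}, and we take the book parameter to be $m \coloneqq r(k-t,k)$.

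First I would verify the two hypotheses of \cref{lem:symm alg finds book}: that $N \geq 2^{k}$, which is clear since $2 - \eta^2/10 > 1$, and that $N \geq 2^{t+\eta t/4} m$. For the latter, \cref{thm:ES binomial} gives $m = r(k-t,k) \leq \binom{2k-t-2}{k-t-1} \leq \binom{2k-t}{k-t}$, and the standard bound $\binom{a}{b} \leq 2^{aH(b/a)}$ then yields $\log_2 m \leq (2k-t)\,H\!\left(\tfrac{k-t}{2k-t}\right) = k(2-\eta)\,H\!\left(\tfrac{1-\eta}{2-\eta}\right)$. (Note that, in contrast to the proof of \cref{thm:3.96}, only the crude estimate of \cref{thm:ES binomial} is used here: since $\ell = k$ lies far from the off-diagonal regime $\ell \leq (k-t)/4$, the sharper \cref{thm:off diagonal exp} does not apply, but it is also not needed.) Since $t = \eta k$ and $\eta t/4 = \eta^2 k/4$, it therefore suffices to check the purely numerical inequality
\[
	2 - \tfrac{\eta^2}{10} \;\geq\; \eta + \tfrac{\eta^2}{4} + (2-\eta)\,H\!\left(\tfrac{1-\eta}{2-\eta}\right) .
\]
To verify this I would Taylor-expand $g(\eta) \coloneqq (2-\eta)H\!\left(\tfrac{1-\eta}{2-\eta}\right)$ about $\eta = 0$, where one computes $g(0)=2$, $g'(0)=-1$, and $g''(0)=-\tfrac{1}{2\ln 2}$; the inequality then reduces to $\tfrac{1}{4\ln 2} - \tfrac14 \geq \tfrac1{10} + O(\eta)$, which holds with a fixed positive margin since $\tfrac{1}{4\ln 2} \approx 0.3607$, and in particular holds for $\eta = \tfrac1{8000}$.

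With both hypotheses in hand, \cref{lem:symm alg finds book} guarantees that \cref{alg:symmetric} terminates by producing a monochromatic copy of $B_{t,m}$. Since $m = r(k-t,k)$, applying \cref{lem:book to clique} with $\ell = k$ forces this coloring to contain a red $K_k$ or a blue $K_k$, contradicting our assumption; hence $r(k) \leq N = 2^{(2-\eta^2/10)k}$. I do not expect any genuine obstacle here: all of the real content has already been absorbed into \cref{lem:symm alg finds book} (and ultimately into the deferred \cref{lem:refinement}), so the theorem itself is pure bookkeeping. The only point demanding care is that the constants $\eta = \tfrac1{8000}$ from \eqref{eq:eta and t def} and $\kappa_{\mathrm{cutoff}}=400$ from \eqref{eq:kappa cutoff def} have been tuned so that the entropy inequality above closes with exactly the exponent $2-\eta^2/10$; since it closes with room to spare (a slack of order $\eta^2 k$ in the exponent), the argument is comfortably robust to the various error terms suppressed along the way.
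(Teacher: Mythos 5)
Your proposal is correct and matches the paper's own proof essentially step for step: set $N=2^{(2-\eta^2/10)k}$, take $m=r(k-t,k)$, bound $m$ via \cref{thm:ES binomial} and the entropy inequality $\binom{a}{b}\leq 2^{aH(b/a)}$, verify the numerical inequality $2-\tfrac{\eta^2}{10}\geq \eta+\tfrac{\eta^2}{4}+(2-\eta)H\bigl(\tfrac{1}{2-\eta}\bigr)$ (note $H\bigl(\tfrac{1-\eta}{2-\eta}\bigr)=H\bigl(\tfrac{1}{2-\eta}\bigr)$ by symmetry of $H$), and invoke \cref{lem:symm alg finds book} followed by \cref{lem:book to clique}. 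The only superficial difference is that the paper asserts non-positivity of all Taylor coefficients of the difference, whereas you explicitly compute the second-order coefficient and observe that this suffices for the small value $\eta=\tfrac{1}{8000}$; both are the same idea. (Incidentally, you correctly conclude with a monochromatic $K_k$, where the paper has the typo $K_t$.)
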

We stress that, as always, the proof we give is not complete, as it relies on \cref{ass:left regular,ass:p_0 both colors}, both of which we have not justified. Additionally, we remark that this is not the strongest bound that can be proved by this technique; in particular, the choices of $\eta$ and $\kappa_{\mathrm{cutoff}}$ are not fully optimized.
\begin{proof}[Proof of \cref{thm:balister}]
	Let $k$ be sufficiently large, and let $N = 2^{(2-\eta^2/10)k}$. Fix a two-coloring of $E(K_N)$, and run \cref{alg:symmetric}. Recalling that $t=\eta k$, let $m = r(k-t,k)$. By \cref{thm:ES binomial}, we have that
	\[
		m \leq \binom{2k-t}k = \binom{(2-\eta)k}k \leq 2^{(2-\eta) H(\frac{1}{2-\eta})\cdot k}.
	\]
	Now, we have that
	\[
		\left(x + \frac{x^2}{4}\right) + (2-x) H\left(\frac{1}{2-x}\right)  \leq 2-\frac{x^2}{10}
	\]
	for all $x \in [0,1]$; this can be verified by computing the Taylor series of the difference, and noting that all coefficients are non-positive. Applying this fact with $x=\eta$, we conclude that
	\[
		2^{t+\eta t/4} m \leq 2^{(\eta + \eta^2/4)\cdot k} \cdot 2^{(2-\eta) H(\frac{1}{2-\eta})\cdot k} \leq 2^{(2-\eta^2/10)k} = N.
	\]
	Since we also have $N \geq 2^k$, \cref{lem:symm alg finds book} implies that the coloring contains a monochromatic $B_{t,m}$. By \cref{lem:book to clique}, we conclude that the coloring also contains a monochromatic $K_t$, as claimed.
\end{proof}

\section{High-dimensional geometry and the proof of Lemma \ref{lem:refinement}}\label{sec:geometry}
\subsection{Reduction to a geometric statement}
All that remains now is to prove \cref{lem:refinement}, which is really the heart of the proof presented above. As previously mentioned, \cref{lem:refinement} is a consequence of a purely geometric lemma, about probability distributions in high-dimensional Euclidean space. Before stating this geometric lemma, however, let us build up to it by stating a probabilistic strengthening of \cref{lem:refinement}.

For a vertex $v \in X$, let us denote by $N_Y(v)$ the red neighborhood of $v$ in $Y$, that is, $N_Y(v)\coloneqq Y \cap N_R(v)$. Similarly, $N_Z(v) \coloneqq Z \cap N_B(v)$ will denote the blue neighborhood of $v$ in $Z$. With this notation, we now state the probabilistic strengthening of \cref{lem:refinement}.
\begin{lemma}\label{lem:probabilistic}
	Let $X,Y,Z$ be disjoint sets of vertices in a coloring of $E(K_N)$, and let $\alpha_R, \alpha_B \in (0,1)$. Let $p_R = d_R(X,Y)$ and $p_B = d_B(X,Z)$. Let $\bm v,\bm w$ be independent, uniformly random vertices of $X$. There exists a real number $\kappa \geq 0$ such that
	\begin{multline}\label{eq:prob outcome}
		\pr\big(\ab{N_Y(\bm v) \cap N_Y(\bm w)} \geq (p_R+(\kappa^2-1)\alpha_R)p_R\ab Y \\
		\text{and}\qquad \ab{N_Z(\bm v) \cap N_Z(\bm w)} \geq (p_B-\alpha_B)p_B \ab Z\big) \geq c 2^{-C\kappa},
	\end{multline}
	or else the same holds upon swapping the roles of red and blue and of $Y$ and $Z$. Here, $c=1/8$ and $C=6$ are the same constants as in \cref{lem:refinement}.
\end{lemma}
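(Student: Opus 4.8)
\textbf{Step 1: reduction to geometry.} The plan is to strip away the combinatorics and reduce everything to a clean statement about random vectors on spheres, then establish that statement; the new content lives entirely in the last step. For $v \in X$ put $a_v := |Y|^{-1/2}\bm 1_{N_Y(v)} \in \R^Y$ and $b_v := |Z|^{-1/2}\bm 1_{N_Z(v)} \in \R^Z$. By \cref{ass:left regular} we have $\|a_v\|^2 = p_R$ and $\|b_v\|^2 = p_B$ for \emph{every} $v$, while $\langle a_v, a_w\rangle = |N_Y(v)\cap N_Y(w)|/|Y|$ and $\langle b_v, b_w\rangle = |N_Z(v)\cap N_Z(w)|/|Z|$. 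Since also $\langle a_v, |Y|^{-1/2}\bm 1_Y\rangle = |N_Y(v)|/|Y| = p_R$ for all $v$, writing $a_v = p_R|Y|^{-1/2}\bm 1_Y + a_v^\perp$ with $a_v^\perp \perp \bm 1_Y$ gives $\|a_v^\perp\|^2 = p_R(1-p_R)$ and $\langle a_v, a_w\rangle = p_R^2 + \langle a_v^\perp, a_w^\perp\rangle$, and likewise $\langle b_v, b_w\rangle = p_B^2 + \langle b_v^\perp, b_w^\perp\rangle$. Writing $a := \alpha_R p_R$ and $b := \alpha_B p_B$, dividing the inequalities in \eqref{eq:prob outcome} by $|Y|$ and $|Z|$ turns them into
\[
	\langle a_{\bm v}^\perp, a_{\bm w}^\perp\rangle \geq (\kappa^2-1)a
	\qquad\text{and}\qquad
	\langle b_{\bm v}^\perp, b_{\bm w}^\perp\rangle \geq -b .
\]
Thus, letting $\mu$ be the law of the pair $(a_{\bm v}^\perp, b_{\bm v}^\perp)$ --- a probability measure on $S_{r_R}\times S_{r_B}$, the product of the spheres of radii $r_R := \sqrt{p_R(1-p_R)}$ and $r_B := \sqrt{p_B(1-p_B)}$ in two Euclidean spaces --- and drawing $(\bm a,\bm b)$ and $(\bm a',\bm b')$ independently from $\mu$, the lemma becomes: there is $\kappa \geq 0$ with $\pr(\langle\bm a,\bm a'\rangle \geq (\kappa^2-1)a$ and $\langle\bm b,\bm b'\rangle \geq -b) \geq c\,2^{-C\kappa}$, or the same with $(\bm a,a,r_R)$ and $(\bm b,b,r_B)$ interchanged. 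Two features of $\mu$ will be used: $\E\langle\bm a,\bm a'\rangle = \|\E\bm a\|^2 \geq 0$ and $\E\langle\bm b,\bm b'\rangle \geq 0$, and $\langle\bm a,\bm a'\rangle \leq r_R^2$, $\langle\bm b,\bm b'\rangle \leq r_B^2$ pointwise. This geometric statement is what the rest of the section proves.

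\textbf{Step 2: the one-colour tail estimate.} The engine is a single-colour fact: if $\bm x$ is a random vector, $\bm x'$ an independent copy, $\E\langle\bm x,\bm x'\rangle \geq 0$, and $|\langle\bm x,\bm x'\rangle| \leq M$ almost surely, then for every $\tau > 0$ there is $\kappa \geq 0$ with $\pr(\langle\bm x,\bm x'\rangle \geq (\kappa^2-1)\tau) \geq c\,2^{-C\kappa}$. If this failed, then with $R := \langle\bm x,\bm x'\rangle$ and the substitution $\lambda = (\kappa^2-1)\tau$, i.e.\ $\kappa = \sqrt{1+\lambda/\tau}$, we would have $\pr(R \geq \lambda) < c\,2^{-C\sqrt{1+\lambda/\tau}}$ for all $\lambda \geq -\tau$, whence
\[
	\E[(R+\tau)_+] = \int_{-\tau}^{\infty}\pr(R\geq\lambda)\dd\lambda < \int_{-\tau}^{\infty} c\,2^{-C\sqrt{1+\lambda/\tau}}\dd\lambda = 2c\tau\int_0^\infty \kappa\,2^{-C\kappa}\dd\kappa = \frac{2c\tau}{(C\ln 2)^2} ,
\]
contradicting $\E[(R+\tau)_+] \geq \E[R+\tau] = \E R + \tau \geq \tau$, since $2c < (C\ln 2)^2$ for $c=\tfrac18$, $C=6$. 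Applied with $\bm x = \bm a$, $\tau = a$, $M = r_R^2$ this produces a red density boost at \emph{some} $\kappa$; the point that it says nothing about the blue coordinate is exactly what Step~3 must overcome.

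\textbf{Step 3: coupling the two colours (the main obstacle).} The real difficulty is that \eqref{eq:prob outcome} wants a density \emph{boost} in one colour and a non-\emph{drop} in the \emph{other}, on a single event of probability $\geq c\,2^{-C\kappa}$, and a crude union bound fails because the ``drop'' event of a colour may have probability close to $1$. I would run a dichotomy. If the ``no drop in either colour'' event $\{\langle\bm a,\bm a'\rangle \geq -a\}\cap\{\langle\bm b,\bm b'\rangle \geq -b\}$ has probability $\geq c$, take $\kappa=0$. Otherwise some colour --- say red --- is ``often anti-aligned'', and here one must use the two recorded properties of $\mu$ (the mean inner product is non-negative, and $\langle\bm a,\bm a'\rangle \leq r_R^2 \leq \tfrac14$) to argue that red cannot be anti-aligned too heavily without the vectors $a_v^\perp$ being forced to cluster --- in the spirit of the elementary bound that a sphere of radius $r$ carries at most $1+r^2/\tau$ points with pairwise inner product below $-\tau$ --- so that ``red no drop'' contains a ``same-cluster'' event of non-negligible probability on which, by averaging against $\E\langle\bm b,\bm b'\rangle\geq 0$, the blue inner product is also rarely far below $-b$. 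Feeding this back into the mechanism of Step~2, the red boost required to cover a common event with ``blue no drop'' is then reached already at $\kappa = O\!\big(\log(r_R^2/a)\big)$, well below the trivial ceiling $\sqrt{1+r_R^2/a}$; the residual regime, in which $X$ is itself so small that this clustering is vacuous, I would handle by the diagonal $\bm v=\bm v'$, which lies in the boost-and-no-drop event of both colours. I expect the genuinely hard part to be making this precise with the \emph{quadratic}-in-$\kappa$ envelope ``probability $\geq c\,2^{-C\kappa}$ against boost $\geq(\kappa^2-1)\alpha$'' that \cref{lem:kappa contribution} later relies on, \emph{together} with the two-colour coexistence, and with the explicit constants $c=\tfrac18$, $C=6$: that is precisely the high-dimensional geometric input the remaining subsections must provide.
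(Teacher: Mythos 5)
Your Steps 1 and 2 are correct and essentially parallel the paper's structure. Step 1 is the same reduction the paper performs by passing to the normalized vectors $\sigma_Y(v)$ of \eqref{eq:sigma def} and recasting the lemma as the purely geometric \cref{lem:geometric}: your projected, scaled vectors $a_v^\perp/\sqrt{a}$ are precisely the paper's $\sigma_Y(v)$, so your $\langle a_{\bm v}^\perp,a_{\bm w}^\perp\rangle\geq(\kappa^2-1)a$ is the paper's $\langle\sigma_Y(\bm v),\sigma_Y(\bm w)\rangle\geq\kappa^2-1$. Step 2 is the one-colour warm-up, which the paper proves as \cref{lem:one color} with a polynomial tail $1/(\kappa^2+2)^2$ (one can of course weaken that to your exponential tail, and your integral computation is correct). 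Both use \cref{obs:expected inner prod} in the same way.

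However, Step 3 is the entire content of the lemma and you have not proved it; you yourself acknowledge this. The dichotomy you propose is not the route the paper takes, and I see no straightforward way to make it work. The objection you raise against the crude union bound also applies to your sketch: even knowing that the event ``no drop in blue'' cannot have tiny probability, you need to find a \emph{single} event of probability $\geq c\,2^{-C\kappa}$ on which \emph{both} the red inner product is $\geq(\kappa^2-1)\alpha_R$ \emph{and} the blue one is $\geq-\alpha_B$, and the quadratic-in-$\kappa$ boost versus exponential-in-$\kappa$ probability envelope must be preserved. Your anti-alignment/clustering idea would at best constrain the marginals, not couple the two colours on one event, and the fallback to the diagonal $\bm v=\bm w$ is of no help since $\pr(\bm v=\bm w)=1/|X|$ can be arbitrarily small.

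The missing idea is the generating-function trick used in the paper's proof of \cref{lem:geometric}. One chooses an explicit analytic function $f:\R^2\to\R$ (the paper takes $f(y,z)=1+y(2+\cosh\sqrt{2z})+z(2+\cosh\sqrt{2y})$) with three simultaneous properties: (i) all Taylor coefficients of $f$ are non-negative and $f(0,0)=1$, so that by the tensor-power extension of \cref{obs:expected inner prod} (namely \cref{lem:moments positive}) one gets $\E[\bm V]\geq 1$ for $\bm V:=f(\langle\sigma_Y(\bm v),\sigma_Y(\bm w)\rangle,\langle\sigma_Z(\bm v),\sigma_Z(\bm w)\rangle)$; (ii) $f(y,z)\leq 0$ whenever $y\leq-1$ or $z\leq-1$, so the positive mass of $\bm V$ lives entirely on the two-colour good event $\cE=\{$both inner products $\geq-1\}$; and (iii) $f(y,z)\leq e^{3\sqrt{\max\{y,z\}+1}}$ on $\cE$, so that after the Fubini/layer-cake step the tail of $\bm V$ controls the probability of $\{\bm M\geq\kappa^2-1\}\cap\cE$ with the correct exponential-versus-quadratic trade-off, and a final union bound splits the $\max$ into the two colours. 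Property (ii) is what solves your ``coupling the two colours'' difficulty: instead of trying to locate a good event by geometric case analysis, one bakes the two-sided constraint into the scalar function whose expectation is bounded below. This is a genuinely different mechanism from what you sketch, and it is the key new ingredient you would need to supply.
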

Note that \cref{lem:probabilistic} is a more symmetric statement than \cref{lem:refinement}: it discusses the common red and blue neighborhoods of a random pair of vertices in $X$, as opposed to isolating a special vertex $v \in X$ as well as a special subset $X' \subseteq X$. Nonetheless, \cref{lem:refinement} follows from \cref{lem:probabilistic} by a simple averaging argument.
\begin{proof}[Proof of \cref{lem:refinement}, assuming \cref{lem:probabilistic}]
	Let us suppose that \eqref{eq:prob outcome} holds;
	the other case, where the roles of the colors are swapped, follows by a symmetric argument. First, by the law of total probability, there exists some fixed $v \in X$ such that
	\begin{multline}\label{eq:fixed v}
		\pr\big(\ab{N_Y(v) \cap N_Y(\bm w)} \geq (p_R+(\kappa^2-1)\alpha_R)p_R\ab Y \\
		\text{and}\qquad \ab{N_Z(v) \cap N_Z(\bm w)} \geq (p_B-\alpha_B)p_B \ab Z\big) \geq c 2^{-C\kappa},
	\end{multline}
	This is the same inequality as \eqref{eq:prob outcome}, except that now only $\bm w$ is random. As in the statement of \cref{lem:refinement}, let $Y' = N_Y(v)$ and $Z' = N_Z(v)$. Let $X' \subseteq X$ denote the set of all $w \in X$ such that
	\[
		\ab{N_Y(v) \cap N_Y(w)} \geq (p_R+(\kappa^2-1)\alpha_R)p_R\ab Y \;\; \text{and}\;\; \ab{N_Z(v) \cap N_Z(w)} \geq (p_B-\alpha_B)p_B \ab Z.
	\]
	Then \eqref{eq:fixed v} is equivalent to the statement that $\ab{X'} \geq c2^{-C \kappa} \ab X$, proving \cref{it:X' size}. Next, we have that
	\[
		e_R(X',Y') = \sum_{w \in X'} \ab{N_R(w) \cap Y'} = \sum_{w \in X'} \ab{N_Y(v) \cap N_Y(w)} \geq \ab{X'}\cdot(p_R+(\kappa^2-1)\alpha_R)p_R\ab Y ,
	\]
	by the definition of $X'$. Recalling that $\ab{Y'}=p_R \ab Y$ by \cref{ass:left regular}, we find that
	\[
		d_R(X',Y') = \frac{e_R(X',Y')}{\ab{X'}\ab{Y'}} \geq p_R + (\kappa^2-1)\alpha_R,
	\]
	which proves \cref{it:density boost} and also implies the first bound in \cref{it:no density drop}. Similarly, we can compute that
	\[
		d_B(X',Z') \geq p_B-\alpha_B,
	\]
	which completes the proof of \cref{it:no density drop}.
\end{proof}

It thus only remains to prove \cref{lem:probabilistic}, which follows from a geometric argument. In order to convert the statement of \cref{lem:probabilistic} to a geometric one, we simply observe that the size of the intersection of two sets can be encoded in a linear-algebraic way, as the inner product of their indicator vectors. Namely, let us denote by $\bm 1_Y(v) \in \R^Y$ the indicator vector of the set $N_Y(v)$; this is the vector whose $y$th coordinate equals $1$ if $y \in N_Y(v)$, and $0$ otherwise. Then we have the identity
\[
	\inn{\bm 1_Y(v), \bm 1_Y(w)} = \ab{N_Y(v) \cap N_Y(w)},
\]
since the inner product of two binary vectors is precisely the number of coordinates in which they both have a $1$. 

In order to precisely arrive at the setup of \cref{lem:probabilistic}, we center and rescale the vectors $\bm 1_Y(v)$. Namely, letting $\bm 1$ denote the all-ones vector, we define
\begin{equation}\label{eq:sigma def}
	\sigma_Y(v) \coloneqq \frac{\bm 1_Y(v)-p_R \bm 1}{\sqrt{\alpha_R p_R \ab Y}}.
\end{equation}
Note that by \cref{ass:left regular}, $\bm 1_Y(v)$ has exactly $p_R \ab Y$ entries equal to $1$, hence by subtracting $p_R\bm 1$ we ensure that the sum of the entries in $\sigma_Y(v)$ is zero. Thus, all the vectors $\{\sigma_Y(v) : v\in X\}$ lie on a sphere of radius $\sqrt{\frac{1-p_R}{\alpha_R}}$ centered at the origin in $\R^Y$. Additionally, we can compute that for all $v,w \in X$, we have
\begin{align*}
	\inn{\sigma_Y(v),\sigma_Y(w)} &= \frac{1}{\alpha_R p_R \ab Y} \inn{\bm 1_Y(v)-p_R \bm 1, \bm 1_Y(w) - p_R \bm 1}\\
	&=\frac{1}{\alpha_R p_R \ab Y} \left( \inn{\bm 1_Y(v),\bm 1_Y(w)} - p_R \inn{\bm 1_Y(v), \bm 1} - p_R \inn{\bm 1,\bm 1_Y(w)} + p_R^2 \inn{\bm 1,\bm 1} \right)\\
	&=\frac{1}{\alpha_R p_R \ab Y} \left( \ab{N_Y(v) \cap N_Y(w)}  - p_R^2 \ab Y\right),
\end{align*}
where we used \cref{ass:left regular} in the final step, to conclude that $\inn{\bm 1_Y(v),\bm 1}=\inn{\bm 1,\bm 1_Y(w)}=p_R\ab Y$. 
Note that if $N_Y(v), N_Y(w)$ were random subsets of $Y$, each of size $p_R \ab Y$, then their expected intersection size would be exactly $p_R^2 \ab Y$, hence the inner product $\inn{\sigma_Y(v),\sigma_Y(w)}$ measures (up to scaling) how much their true intersection size deviates from this average value.
In particular, for any $\lambda \in \R$, we have that $\inn{\sigma_Y(v),\sigma_Y(w)} \geq \lambda$ if and only if
\[
	\frac{1}{\alpha_R p_R \ab Y} \left( \ab{N_Y(v) \cap N_Y(w)}  - p_R^2 \ab Y\right) \geq \lambda,
\]
which in turn happens if and only if 
\[
	\ab{N_Y(v) \cap N_Y(w)} \geq \left( p_R + \lambda \alpha_R \right) p_R \ab Y.
\]
This shows that \cref{lem:probabilistic} is a special case of the following geometric result of \textcite{2410.17197}.
\begin{theorem}[Geometric lemma]\label{lem:geometric}
	Let $X,Y,Z$ be finite sets, and let $\sigma_Y:X \to \R^Y$ and $\sigma_Z:X \to \R^Z$ be arbitrary functions. Let $\bm v, \bm w$ be independent, uniformly random elements of $X$. There exists a real number $\kappa\geq 0$ such that
	\begin{equation}\label{eq:prob outcome geom}
		\pr\big(\inn{\sigma_Y(\bm v), \sigma_Y(\bm w)} \geq \kappa^2-1 \qquad\text{and}\qquad \inn{\sigma_Z(\bm v), \sigma_Z(\bm w)} \geq -1\big) \geq c 2^{-C\kappa},
	\end{equation}
	or else the same holds upon swapping the roles of $Y$ and $Z$. 
\end{theorem}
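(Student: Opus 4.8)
The reductions above bring us to \cref{lem:geometric}, so that is the statement I would attack directly. Write $P \coloneqq \inn{\sigma_Y(\bm v),\sigma_Y(\bm w)}$ and $Q \coloneqq \inn{\sigma_Z(\bm v),\sigma_Z(\bm w)}$; these are the entries, indexed by the random pair $(\bm v,\bm w)$, of the Gram matrices $M_Y \coloneqq \bigl(\inn{\sigma_Y(v),\sigma_Y(w)}\bigr)_{v,w\in X}$ and $M_Z$, both positive semidefinite. The facts I would extract first are: (i) $\E[P \mid \bm v,\bm w\in T] = \ab{T}^{-2}\,\bm 1_T^{\top}M_Y\bm 1_T \geq 0$ for every $T\subseteq X$, and likewise for $Q$, so in particular $\E[P],\E[Q]\geq0$; (ii) also $\E[PQ] = \bigl\|\E_v[\sigma_Y(v)\otimes\sigma_Z(v)]\bigr\|^2 \geq 0$ (Gram of the tensors); (iii) since $x\leq x_+$ pointwise, $\E[(P+1)_+]\geq\E[P+1]\geq1$ and $\E[(Q+1)_+]\geq1$; and (iv) by Cauchy--Schwarz $P\leq\max_v\|\sigma_Y(v)\|^2$ and $Q\leq\max_v\|\sigma_Z(v)\|^2$, with equality on the diagonal $\bm v=\bm w$.

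The first real step exploits the freedom to choose $\kappa$, via a one-variable integration. Suppose, towards a contradiction, that $\pr(P\geq\kappa^2-1,\,Q\geq-1)<c\,2^{-C\kappa}$ for \emph{every} $\kappa\geq0$. Integrating this pointwise inequality against $2\kappa\dd\kappa$ and substituting $s=\kappa^2-1$ gives
\begin{align*}
	\E\bigl[(P+1)_+\,\bm 1_{Q\geq-1}\bigr]
	&= \int_{-1}^{\infty}\pr(P\geq s,\,Q\geq-1)\dd s \\
	&< \int_{0}^{\infty}c\,2^{-C\kappa}\,2\kappa\dd\kappa = \frac{2c}{(C\ln 2)^2}\eqqcolon\delta,
\end{align*}
with $\delta$ a small absolute constant ($\approx0.015$ for $c=\tfrac18$, $C=6$). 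Hence it suffices to produce \emph{either} $\E[(P+1)_+\bm 1_{Q\geq-1}]\geq\delta$ \emph{or}, by the same computation with $Y$ and $Z$ interchanged, $\E[(Q+1)_+\bm 1_{P\geq-1}]\geq\delta$. Assume both fail. By (iii), almost all of the mass of $(P+1)_+$ then sits on $\{Q<-1\}$, and symmetrically: $\E[(P+1)_+\bm 1_{Q<-1}]\geq1-\delta$ and $\E[(Q+1)_+\bm 1_{P<-1}]\geq1-\delta$; and the $\kappa=0$ instance of the failed inequality gives $\pr(P\geq-1,\,Q\geq-1)<c$. (If one drops the constraint $Q\geq-1$ entirely, this paragraph already finishes the proof, since $\E[(P+1)_+]\geq1>\delta$; the whole difficulty is the joint constraint.)

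Ruling out the configuration just obtained is, I expect, the crux, and the one place where positive semidefiniteness is genuinely needed --- facts (iii)--(iv) alone are consistent with it (there is an explicit three-point distribution of $(P,Q)$ realising it, which however violates $\E[PQ]\geq0$). Two regimes present themselves. If some vector is long, say $\|\sigma_Y(v_0)\|^2$ exceeds a suitable multiple of $(\log_2\ab{X})^2$, then $\bm v=\bm w=v_0$ --- an event of probability $\ab{X}^{-2}$ on which $P=\|\sigma_Y(v_0)\|^2$ and $Q\geq0\geq-1$ --- already beats $c\,2^{-C\kappa}$ at $\kappa\approx\|\sigma_Y(v_0)\|$, since $2^{-C\kappa}$ is then far smaller than $\ab{X}^{-2}$; and if $\ab{X}\leq 1/c$ we are done at once, the diagonal alone having probability $\ab{X}^{-1}\geq c$. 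Otherwise every $\sigma_Y(v),\sigma_Z(v)$ is short, and here one must combine the near-disjointness $\pr(P\geq-1,Q\geq-1)<c$ with the positive correlations $\E[P],\E[Q],\E[PQ]\geq0$ and with the rigidity that positive semidefiniteness imposes on the pairs where $P$ (resp.\ $Q$) is large --- for instance, that a family of vectors with pairwise inner products below $-1$ has size bounded in terms of their squared norms, so ``$P$ very negative'' cannot propagate across too large a subset of $X$. Turning this into a contradiction, with constants compatible with $c=\tfrac18$ and $C=6$, is the delicate technical heart; this is where I expect the real work of \textcite{2410.17197} to lie, and I would not try to reconstruct it in the proposal.
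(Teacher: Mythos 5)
Your setup is clean and correct as far as it goes: the Gram-matrix positivity, the tensor-product computation giving $\E[PQ]\geq 0$, and the Fubini reduction
\[
\E[(P+1)_+\bm 1_{Q\geq -1}] = \int_{-1}^\infty \pr(P\geq s,\,Q\geq -1)\dd s
\]
combined with the $\kappa$-substitution are all right, and they correctly reduce the lemma to showing that one of $\E[(P+1)_+\bm 1_{Q\geq -1}]$, $\E[(Q+1)_+\bm 1_{P\geq -1}]$ is at least a small absolute constant. But you then declare this reduced claim to be ``the delicate technical heart'' and leave it unproven, and the route you speculate about (long vectors, families with pairwise inner products below $-1$, diagonal contributions) is not what makes the paper's proof work and it is not at all clear that it can be pushed through.

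The idea you are missing is the one the paper is built around: extend your observation $\E[PQ]\geq 0$ to \emph{all} mixed moments $\E[P^aQ^b]\geq 0$ (the tensor-power argument generalizes verbatim --- this is the paper's Lemma~\ref{lem:moments positive}), and then choose an \emph{analytic function} $f(y,z)$ with \emph{non-negative Taylor coefficients} and $f(0,0)=1$, so that $\E[f(P,Q)]\geq 1$ falls out automatically. The whole difficulty that you flag --- coupling the positivity information to the quadrant constraint $\{P\geq -1,\,Q\geq -1\}$ --- is resolved by additionally engineering $f$ to be \emph{non-positive outside that quadrant} and to have controlled (sub-exponential in $\sqrt{\max\{y,z\}+1}$) growth inside it. The paper exhibits the concrete choice $f(y,z)=1+y(2+\cosh\sqrt{2z})+z(2+\cosh\sqrt{2y})$, which satisfies all four requirements. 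Once $f$ is in hand, the rest is exactly your Fubini/change-of-variables computation applied to $\bm V=f(P,Q)$ rather than to $(P+1)_+\bm 1_{Q\geq -1}$, followed by a union bound. So the gap in your proposal is a genuine missing idea, not a missing calculation: without the device of a sign-constrained power series, the information ``$\E[P],\E[Q],\E[PQ]\geq 0$'' is simply too weak to rule out the bad configuration you isolate, and the geometric alternative you sketch is not developed to the point where one could tell whether it closes.
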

Indeed, the statement of \cref{lem:probabilistic} is precisely that of \cref{lem:geometric} when one defines $\sigma_Y$ as in \eqref{eq:sigma def} (and analogously for $\sigma_Z$). Hence it only remains to prove \cref{lem:geometric}.

Before turning to the proof, a few remarks about \cref{lem:geometric} are in order. First, note that the sets $Y$ and $Z$ play no role in the statement: we could equally well take $\sigma_Y$ and $\sigma_Z$ to be valued in the same high-dimensional Euclidean space $\R^n$, or alternately in the infinite-dimensional Hilbert space $\ell^2$. Second, as we will see, the set $X$ also does not really matter: the same statement holds for arbitrary coupled probability distributions on $\R^Y \times \R^Z$, and not only those distributions that are uniform on a finite support. Finally, and perhaps most importantly, we stress that the statement of \cref{lem:geometric} is \emph{not} scale-invariant, unlike many other results about inner products. That is, because the outcome of \cref{lem:geometric} involves constants like $-1$, the statement of the lemma changes if we replace, say, $\sigma_Y$ by $100\sigma_Y$: we cannot simply take the same value of $\kappa$ upon such a rescaling. In fact, this lack of scale-invariance is crucial in the deduction of \cref{lem:probabilistic} and hence \cref{lem:refinement}: recall from \eqref{eq:sigma def} that we chose a careful scaling of $\sigma_Y$, in order to incorporate the parameter $\alpha_R$. 

\subsection{The one-color version of \cref{lem:geometric}}
\cref{lem:geometric} is a ``two-color'' statement, in that it involves two functions $\sigma_Y,\sigma_Z$, as is necessary for the application to two-color Ramsey numbers. \textcite[Lemma 3.1]{2410.17197} prove a more general result, which gives essentially the same outcome but for an arbitrary number of functions $\sigma_1,\dots,\sigma_q$; this is, naturally, necessary for their application to upper bounds on multicolor Ramsey numbers. While we will continue to work in the two-color setting, the proof of \cref{lem:geometric} is easier to understand if one begins by proving a simpler statement, which is the \emph{one-color} version of this result.
\begin{proposition}[One-color geometric lemma]\label{lem:one color}
	Let $X,Y$ be finite sets, and let $\sigma:X \to \R^Y$ be an arbitrary function. Let $\bm v, \bm w$ be independent, uniformly random elements of $X$. There exists a real number $\kappa \geq 0$ such that
	\[
		\pr \left( \inn{\sigma(\bm v), \sigma(\bm w)} \geq \kappa^2-1 \right) \geq \frac{1}{(\kappa^2+2)^2}.
	\]
\end{proposition}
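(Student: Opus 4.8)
The plan is to reduce the statement to a one-dimensional tail estimate for the scalar random variable $Z \coloneqq \inn{\sigma(\bm v),\sigma(\bm w)}$, and then establish that estimate by a short argument combining the layer-cake formula with a single positivity fact about $Z$. The key observation is that, since $\bm v$ and $\bm w$ are independent and identically distributed, bilinearity of the inner product yields
\[
	\E[Z] = \inn{\E[\sigma(\bm v)],\,\E[\sigma(\bm w)]} = \lVert \E[\sigma(\bm v)]\rVert^2 \geq 0 .
\]
This is the only structural input we will need.

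Next I would reparametrize the target inequality. Writing $\lambda = \kappa^2 - 1$, which ranges over $[-1,\infty)$ as $\kappa$ ranges over $[0,\infty)$, and noting that $\kappa^2 + 2 = \lambda + 3$, the conclusion of \cref{lem:one color} is equivalent to the existence of some $\lambda \geq -1$ with $\pr(Z \geq \lambda) \geq (\lambda+3)^{-2}$. Suppose, for contradiction, that $\pr(Z \geq \lambda) < (\lambda+3)^{-2}$ for every $\lambda \geq -1$. Consider the truncated variable $\max(Z,-1)$. On the one hand, $\max(Z,-1) \geq Z$ pointwise, so $\E[\max(Z,-1)] \geq \E[Z] \geq 0$. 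On the other hand, since $\max(Z,-1)+1 = (Z+1)^+ \geq 0$, the layer-cake formula gives
\[
	\E[\max(Z,-1)] = -1 + \int_0^\infty \pr(Z > t-1)\dd t = -1 + \int_{-1}^\infty \pr(Z > s)\dd s \leq -1 + \int_{-1}^\infty \frac{\dd s}{(s+3)^2} = -1 + \tfrac12 = -\tfrac12 ,
\]
using $\pr(Z > s) \leq \pr(Z \geq s) < (s+3)^{-2}$ in the inequality. This contradicts $\E[\max(Z,-1)] \geq 0$, which proves the proposition.

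I do not expect a serious obstacle in this particular proposition: the only ``trick'' is to notice that the weight function $1/(\kappa^2+2)^2$ in the statement is calibrated precisely so that, after the substitution $\lambda = \kappa^2-1$, its integral over $[-1,\infty)$ equals $\tfrac12$, which is strictly smaller than the cost ($1$) of truncating $Z$ from below at $-1$; the positivity $\E[Z]\geq 0$ then forces the contradiction. The exact-constant bookkeeping — matching the denominator $3 = 2+1$ to the truncation level $-1$ — is the only point requiring care. The genuinely hard part of the overall argument, namely upgrading \cref{lem:one color} to the two-color \cref{lem:geometric} with the explicit constants $c = 1/8$ and $C = 6$, lies beyond this proposition and must be handled separately.
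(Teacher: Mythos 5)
Your proof is correct and follows essentially the same route as the paper: establish $\E[\inn{\sigma(\bm v),\sigma(\bm w)}]\geq 0$ from bilinearity and i.i.d.-ness, then derive a contradiction via the layer-cake formula by comparing to $\int_{-1}^\infty (s+3)^{-2}\dd s = \tfrac12 < 1$. The only cosmetic difference is that you truncate $Z$ at $-1$ and use an equality in the layer-cake step, whereas the paper sets $\bm V = Z+1$ and directly invokes the one-sided inequality $\E[\bm V] \leq \int_0^\infty \pr(\bm V\geq\lambda)\dd\lambda$; these are equivalent.
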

Note that the quantitative dependencies in the one-color statement are much better: we obtain a \emph{polynomial} relationship between the lower bound on the inner products and the success probability, rather than the exponential relationship appearing in \cref{lem:geometric}. It remains open whether there is such a polynomial strengthening of \cref{lem:geometric}; if there is, one would obtain a strengthened version of \cref{lem:refinement}, which would in all likelihood lead to better upper bounds on diagonal Ramsey numbers.

In the proof of \cref{lem:one color}, we will use two simple and well-known observations. The first states that the expected inner product of independent, identically distributed random vectors is non-negative.
\begin{lemma}\label{obs:expected inner prod}
	Let $\bm \sigma, \bm \sigma'$ be independent, identically distributed random vectors in $\R^n$, sampled from an arbitrary probability distribution. Then
	\[
		\E[\inn{\bm \sigma, \bm \sigma'}] \geq 0.
	\]
\end{lemma}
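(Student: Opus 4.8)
The plan is to exploit independence to factor the expectation of the inner product into the inner product of the expectations, and then observe that this is a squared norm. Writing the inner product coordinatewise, $\inn{\bm\sigma,\bm\sigma'} = \sum_{i=1}^n \bm\sigma_i \bm\sigma_i'$, and using linearity of expectation together with the fact that $\bm\sigma_i$ and $\bm\sigma_i'$ are independent (as coordinates of the independent vectors $\bm\sigma,\bm\sigma'$), I would compute
\[
	\E[\inn{\bm\sigma,\bm\sigma'}] = \sum_{i=1}^n \E[\bm\sigma_i \bm\sigma_i'] = \sum_{i=1}^n \E[\bm\sigma_i]\,\E[\bm\sigma_i'].
\]
Since $\bm\sigma$ and $\bm\sigma'$ are identically distributed, $\E[\bm\sigma_i'] = \E[\bm\sigma_i]$ for every $i$, so setting $\mu \coloneqq \E[\bm\sigma] \in \R^n$ (the vector of coordinatewise means) gives $\E[\inn{\bm\sigma,\bm\sigma'}] = \sum_{i=1}^n \mu_i^2 = \|\mu\|^2 \geq 0$.

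There is essentially no obstacle here: the only point requiring a word of care is the existence of the expectations, but in the intended application $X$ is finite, so $\bm\sigma$ takes finitely many values and every expectation in sight is a finite sum. (More generally one would simply assume $\bm\sigma$ is integrable, which is all that is needed for the factorization $\E[\bm\sigma_i\bm\sigma_i'] = \E[\bm\sigma_i]\E[\bm\sigma_i']$ to be valid.) Thus the entire argument is the one-line identity $\E[\inn{\bm\sigma,\bm\sigma'}] = \inn{\E\bm\sigma,\E\bm\sigma'} = \|\E\bm\sigma\|^2$, and the conclusion follows.
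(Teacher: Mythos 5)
Your argument is correct and is essentially the same as the paper's: both use linearity of expectation and independence to write $\E[\inn{\bm\sigma,\bm\sigma'}]=\inn{\E\bm\sigma,\E\bm\sigma'}$, then observe that by identical distribution this is $\lVert\E\bm\sigma\rVert^2\geq 0$. The coordinatewise expansion and the remark about integrability are just slightly more explicit versions of the same one-line computation.
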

\begin{proof}
	By linearity of expectation and the independence of $\bm \sigma$ and $\bm \sigma'$, we have
	\[
		\E[\inn{\bm \sigma, \bm \sigma'}] = \inn{\E[\bm \sigma], \E[\bm \sigma']}.
	\]
	$\E[\bm \sigma]$ is some fixed vector in $\R^n$, and as $\bm \sigma$ and $\bm \sigma'$ are identically distributed, it equals $\E[\bm \sigma']$. Hence $\E[\inn{\bm \sigma, \bm \sigma'}]$ is simply the squared norm of some vector in $\R^n$, which is non-negative.
\end{proof}
The second well-known observation we need is an immediate consequence of Fubini's theorem.
\begin{lemma}\label{obs:fubini}
	For any random variable $\bm V$, we have
	\[
		\E[\bm V] \leq \int_0^\infty \pr(\bm V \geq \lambda) \dd \lambda.
	\]
\end{lemma}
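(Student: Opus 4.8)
The plan is to recognize the right-hand side as the expectation of the positive part $\bm V^+ \coloneqq \max(\bm V,0)$, and then to observe that $\bm V^+$ dominates $\bm V$ pointwise, so that $\E[\bm V] \leq \E[\bm V^+]$. Thus the whole content of the lemma reduces to the ``layer-cake'' identity $\E[\bm V^+] = \int_0^\infty \pr(\bm V \geq \lambda) \dd\lambda$.

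To prove this identity, the first step is the pointwise formula
\[
	\bm V^+ = \int_0^\infty \bm 1[\bm V \geq \lambda]\dd\lambda,
\]
where $\bm 1[\cdot]$ denotes the indicator of an event: for an outcome with $\bm V \leq 0$ both sides vanish, while for an outcome with $\bm V > 0$ the integrand equals $1$ exactly for $\lambda \in [0,\bm V]$, so the integral evaluates to $\bm V = \bm V^+$. The second step is to take expectations and swap the expectation over the probability space with the integral over $\lambda$. This is precisely where Fubini's theorem enters, in its Tonelli form: the map $(\omega,\lambda)\mapsto \bm 1[\bm V(\omega)\geq \lambda]$ is non-negative and jointly measurable on the product of the probability space with $[0,\infty)$, so the interchange is unconditionally justified, giving
\[
	\E[\bm V^+] = \E\!\left[\int_0^\infty \bm 1[\bm V \geq \lambda]\dd\lambda\right] = \int_0^\infty \E\big[\bm 1[\bm V \geq \lambda]\big]\dd\lambda = \int_0^\infty \pr(\bm V \geq \lambda)\dd\lambda.
\]
Combining this with $\E[\bm V]\leq \E[\bm V^+]$ completes the argument.

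I expect no real obstacle here; the only point meriting a word of care is the measurability needed to invoke Tonelli, which is automatic for a real random variable, and the fact that $\E[\bm V^+]$ may be infinite, in which case the claimed inequality is trivial. (If one wishes to avoid measure theory altogether, one may note that in every application in this expos\'e $\bm V$ is a bounded random variable, for which both sides are finite and the interchange is elementary.) It is also worth recording that the inequality is strict exactly when $\pr(\bm V < 0) > 0$, which is why the statement is phrased with ``$\leq$'' rather than ``$=$''.
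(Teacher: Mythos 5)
Your proof is correct and takes essentially the same route as the paper: both prove the pointwise identity $\int_0^\infty \bm 1[V \geq \lambda]\dd\lambda = \max(V,0) \geq V$ and then apply Fubini/Tonelli to swap the $\lambda$-integral with the expectation. Your framing via the positive part $\bm V^+$ and the explicit layer-cake identity is a slightly more modular packaging of the same argument, but the content is identical.
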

\begin{proof}
	Let the underlying probability space of $\bm V$ be $(\Omega,\mu)$. Then $\pr(\bm V \geq \lambda)=\int_\Omega \bm 1_{\bm V(\omega)\geq \lambda} \dd \mu(\omega)$. Additionally, for any fixed real number $V$, we have $\int_0^\infty \bm 1_{V \geq \lambda} \dd \lambda$ equals $V$ if $V \geq 0$, and equals $0$ otherwise. In particular, for all $V$, we have $\int_0^\infty \bm 1_{V \geq \lambda} \dd \lambda\geq V$. Therefore, by Fubini's theorem,
	\begin{align*}
		\int_0^\infty \pr(\bm V \geq \lambda) \dd \lambda &= \int_0^\infty \int_\Omega \bm 1_{\bm V(\omega) \geq \lambda} \dd \mu(\omega) \dd \lambda \\
		&= \int_\Omega \int_0^\infty \bm 1_{\bm V(\omega) \geq \lambda} \dd \lambda \dd \mu(\omega)\\
		&\geq\int_\Omega \bm V(\omega) \dd \mu(\omega)\\
		&=\E[\bm V].\qedhere
	\end{align*}

\end{proof}

\cref{lem:one color} is now a simple consequence of these two observations.
\begin{proof}[Proof of \cref{lem:one color}]
	Let $\bm V = \inn{\sigma(\bm v), \sigma(\bm w)} +1$, so that $\E[\bm V] \geq 1$ by \cref{obs:expected inner prod}. By \cref{obs:fubini}, we find that
	\[
		1 \leq \E[\bm V] \leq \int_0^\infty \pr(\bm V \geq \lambda) \dd \lambda = \int_{-1}^\infty \pr(\inn{\sigma(\bm v), \sigma(\bm w)} \geq \lambda) \dd \lambda.
	\]
	If we had that $\pr(\inn{\sigma(\bm v), \sigma(\bm w)} \geq \lambda) \leq 1/(\lambda+3)^2$ for all $\lambda \geq -1$, then we would conclude that
	\[
		1\leq \int_{-1}^\infty \pr(\inn{\sigma(\bm v), \sigma(\bm w)} \geq \lambda) \dd \lambda \leq \int_{-1}^\infty\frac{1}{(\lambda+3)^2} \dd \lambda = \frac 12,
	\]
	a contradiction. Therefore, there exists some $\lambda \geq -1$ for which $\pr(\inn{\sigma(\bm v), \sigma(\bm w)} \geq \lambda) \geq 1/(\lambda+3)^2$. 
	Substituting $\kappa = \sqrt{\lambda+1}$, we conclude that
	\[
		\pr(\inn{\sigma(\bm v), \sigma(\bm w)} \geq \kappa^2-1) \geq \frac{1}{(\kappa^2+2)^2},
	\]
	as claimed.
\end{proof}

\subsection{Proof of \cref{lem:geometric}}
Although the proof of \cref{lem:one color} was very simple, it is helpful to reconsider it from a high-level view, as a very similar strategy will be used to prove \cref{lem:geometric}. In the proof of \cref{lem:one color}, we considered the function $f(x)=x+1$, and then defined a random variable
\[
	\bm V \coloneqq f(\inn{\sigma(\bm v), \sigma(\bm w)}).
\]
The proof of \cref{lem:one color} relies on three important properties of this function $f$, although they are all so simple that they were not explicitly noted in the proof. First, $f$ is non-positive on the region $\{x:x \leq -1\}$; this was implicitly used to restrict our attention to $\lambda \geq -1$, which ends up meaning that we obtain $\kappa \geq 0$. Second, $f$ does not grow too fast, which was used to obtain a good trade-off between $\lambda$ and the probability $\pr(\bm V \geq \lambda)$. Finally, and most importantly, we had the inequality $\E[\bm V] \geq 1$, which followed from \cref{obs:expected inner prod}.

In the proof of \cref{lem:geometric}, we will pick a more complicated function $f:\R^2 \to \R$, and use it to define a random variable $\bm V$ by
\begin{equation}\label{eq:V def}
	\bm V \coloneqq f(\inn{\sigma_Y(\bm v), \sigma_Y(\bm w)}, \inn{\sigma_Z(\bm v), \sigma_Z(\bm w)}).
\end{equation}
In addition to requiring $f$ to satisfy analogues of the three properties discussed above, we also want the choice of $f$ to allow us
to relate $\pr(\bm V \geq \lambda)$ 
to the probability we wish to estimate in \eqref{eq:prob outcome geom}. 
The main difficulty, of course, is that we now want to ensure that two inner products are simultaneously large.

To that end, we define
\begin{equation}\label{eq:M def}
	\bm M \coloneqq \max \{\inn{\sigma_Y(\bm v), \sigma_Y(\bm w)}, \inn{\sigma_Z(\bm v), \sigma_Z(\bm w)}\}
\end{equation}
and let $\cE$ be the event
\begin{equation}\label{eq:E def}
	\cE \coloneqq \left\{ \inn{\sigma_Y(\bm v), \sigma_Y(\bm w)} \geq -1 \qquad \text{ and } \qquad \inn{\sigma_Z(\bm v), \sigma_Z(\bm w)}\geq -1 \right\}.
\end{equation}
Then our first goal is to obtain a lower bound on $\pr(\{\bm M \geq \kappa^2-1\} \cap \cE)$, for some $\kappa$; such a lower bound can be directly converted to a lower bound on the probability in \eqref{eq:prob outcome geom} (or on the analogous quantity upon interchanging the roles of $Y$ and $Z$). As such, it is natural to try defining the random variable $\bm V$ as $\bm M \bm 1_\cE$, since then the probability we are interested in is precisely of the form $\pr(\bm V \geq \lambda)$. The problem with this choice is that it is not clear how to lower-bound $\E[\bm V]$, or indeed why one should expect any non-trivial lower bound to hold. Indeed, while $\E[\bm M] \geq 0$ by \cref{obs:expected inner prod}, it is possible that much of the positive contribution to $\E[\bm M]$ is attained on the complement of $\cE$.

Thus, in order for $\bm V$ to ``know about'' the event $\cE$, we will ensure that $f(y,z) \leq 0$ if $y \leq -1$ or $z \leq -1$. 
This ensures that $\bm V$ is non-positive on the complement of $\cE$, so that bounds on $\pr(\bm V \geq \lambda)$ can be converted to bounds on the quantity we are interested in, $\pr(\{\bm M \geq \kappa^2-1\} \cap \cE)$. Next, as before, we want a global upper bound on $f(y,z)$ in terms of $y$ and $z$, and we would like this upper bound to be ``reasonable'': we will use this bound to convert information of the form $\bm V \geq \lambda$ to information of form $\bm M \geq \kappa^2-1$, and we want good control on the dependencies between $\kappa$ and $\lambda$. Finally, and most importantly, we want that $\E[\bm V] \geq 1$, so that we can apply the same argument as in the proof of \cref{lem:one color}.

This last condition seems like the most difficult one to satisfy, since we have very little information about the random variables $\inn{\sigma_Y(\bm v), \sigma_Y(\bm w)}$ and $\inn{\sigma_Z(\bm v), \sigma_Z(\bm w)}$: if $f$ is some complicated function of these variables, why should we expect $\E[\bm V] \geq 1$? 

The way to ensure this is via a very simple but remarkably powerful trick, as follows. First, we will ensure that $f$ is analytic with $f(0,0)=1$. We then consider the Taylor series of $f$, that is, we write $f(y,z) = \sum_{a,b \geq 0} r_{a,b} y^a z^b$, and apply analyticity and linearity of expectation to conclude that
\[
	\E[\bm V] = \sum_{a,b = 0}^\infty r_{a,b} \E[\inn{\sigma_Y(\bm v), \sigma_Y(\bm w)}^a \inn{\sigma_Z(\bm v), \sigma_Z(\bm w)}^b].
\]
The utility of this expansion comes from the following simple lemma, a 
generalization of \cref{obs:expected inner prod}, which states that all the expectations appearing in the expression above are non-negative. Thus, a lower bound on $\E[\bm V]$ follows immediately, as long as we ensure that $f$ has only non-negative Taylor coefficients.
\begin{lemma}\label{lem:moments positive}
	Fix any probability distribution on $\R^Y \times \R^Z$, and let $(\bm \sigma_Y, \bm \sigma_Z)$ and $(\bm \sigma_Y', \bm \sigma_Z')$ be independent samples from this distribution. We have
	\[
		\E[\inn{\bm \sigma_Y, \bm \sigma_Y'}^a \inn{\bm \sigma_Z, \bm \sigma_Z'}^b] \geq 0
	\]
	for all non-negative integers $a,b$.
\end{lemma}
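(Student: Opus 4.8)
The plan is to reduce the statement to the single-term identity behind \cref{obs:expected inner prod}, via a tensor-power trick. First I would expand both inner products in coordinates: writing $\bm\sigma_Y=(\bm\sigma_Y(y))_{y\in Y}$ and similarly for the others, we have
\[
	\inn{\bm\sigma_Y,\bm\sigma_Y'}^a=\sum_{(y_1,\dots,y_a)\in Y^a}\prod_{i=1}^a\bm\sigma_Y(y_i)\,\bm\sigma_Y'(y_i),\qquad \inn{\bm\sigma_Z,\bm\sigma_Z'}^b=\sum_{(z_1,\dots,z_b)\in Z^b}\prod_{j=1}^b\bm\sigma_Z(z_j)\,\bm\sigma_Z'(z_j).
\]
Multiplying these and applying linearity of expectation, it suffices to show that for every fixed tuple of indices $y_1,\dots,y_a\in Y$ and $z_1,\dots,z_b\in Z$, the quantity
\[
	\E\!\left[\Big(\prod_{i=1}^a\bm\sigma_Y(y_i)\Big)\Big(\prod_{j=1}^b\bm\sigma_Z(z_j)\Big)\Big(\prod_{i=1}^a\bm\sigma_Y'(y_i)\Big)\Big(\prod_{j=1}^b\bm\sigma_Z'(z_j)\Big)\right]
\]
is non-negative.

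For this I would use the independence of the two samples $(\bm\sigma_Y,\bm\sigma_Z)$ and $(\bm\sigma_Y',\bm\sigma_Z')$: the displayed expectation factors as a product of the expectation over the unprimed sample and the expectation over the primed sample. Since the two samples are identically distributed, the two factors are equal, so the displayed quantity is the square of the single real number $\E\!\big[\prod_i\bm\sigma_Y(y_i)\prod_j\bm\sigma_Z(z_j)\big]$, and hence is $\geq 0$. Summing over all index tuples then yields $\E[\inn{\bm\sigma_Y,\bm\sigma_Y'}^a\inn{\bm\sigma_Z,\bm\sigma_Z'}^b]\geq 0$.

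Equivalently, and perhaps more cleanly, one can package this as follows: form the random tensor $\bm T\coloneqq\bm\sigma_Y^{\otimes a}\otimes\bm\sigma_Z^{\otimes b}$, a vector in the Euclidean space $(\R^Y)^{\otimes a}\otimes(\R^Z)^{\otimes b}$, and likewise $\bm T'$ from the primed sample; then $\inn{\bm T,\bm T'}=\inn{\bm\sigma_Y,\bm\sigma_Y'}^a\inn{\bm\sigma_Z,\bm\sigma_Z'}^b$ for the natural inner product, and $\bm T,\bm T'$ are i.i.d., so \cref{obs:expected inner prod} applied in this larger space gives $\E[\inn{\bm T,\bm T'}]=\lVert\E[\bm T]\rVert^2\geq 0$ directly. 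I do not expect a genuine obstacle here; the only point worth a brief remark is that one should assume the distribution has finite $(a+b)$-th moments so that the expectations are well-defined — which is automatic in the application to \cref{lem:geometric}, where the distribution is uniform on the finite set $X$ and every expectation is a finite sum. The whole content of the lemma is the observation that the expectation of an inner product of i.i.d.\ vectors is a squared norm; the coordinate (or tensor) expansion merely lets us invoke it after taking powers.
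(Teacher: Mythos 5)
Your proof is correct, and the tensor-power packaging you give at the end (forming $\bm T = \bm\sigma_Y^{\otimes a}\otimes\bm\sigma_Z^{\otimes b}$ and applying \cref{obs:expected inner prod} in the larger space) is exactly the paper's argument; your coordinate expansion is just the same computation written out entrywise. No gap.
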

\begin{proof}
	Let $\bm \sigma$ be a random vector in $(\R^Y)^{\otimes a} \otimes (\R^Z)^{\otimes b}$ defined by
	\[
		\bm \sigma = (\underbrace{\bm \sigma_Y \otimes \dots \otimes \bm \sigma_Y}_{a \text{ times}}) \otimes (\underbrace{\bm \sigma_Z \otimes \dots \otimes \bm \sigma_Z}_{b\text{ times}}),
	\]
	and define $\bm \sigma'$ analogously. Note that $\bm \sigma$ and $\bm \sigma'$ are independent and identically distributed, and that
	\[
		\inn{\bm \sigma, \bm \sigma'} = \inn{\bm \sigma_Y, \bm \sigma_Y'}^a \inn{\bm \sigma_Z, \bm \sigma_Z'}^b,
	\]
	hence the result follows from \cref{obs:expected inner prod}.
\end{proof}

So all that remains is to pick a function $f:\R^2 \to \R$ which is non-positive on the set $\{(y,z) : y \leq -1 \text{ or } z \leq -1\}$, which does not grow too quickly, and which is analytic with non-negative Taylor coefficients. The following lemma, whose proof is a simple calculus exercise that we omit, provides such a function.
\begin{lemma}
	The function $f:\R^2 \to \R$ defined by
	\begin{equation}\label{eq:f def}
		f(y,z) \coloneqq 1+ y (2+\cosh \sqrt {2z}) + z (2+\cosh \sqrt{2y})
	\end{equation}
	is non-positive on the set $\{(y,z) : y \leq -1 \text{ or } z \leq -1\}$, is analytic with non-negative Taylor coefficients, has $f(0,0)=1$, and satisfies
	\begin{equation}\label{eq:f UB}
		f(y,z) \leq e^{3\sqrt{\max\{y,z\}+1}}
	\end{equation}
	for all $y,z \geq -1$.
\end{lemma}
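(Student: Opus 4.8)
The plan is to verify the four asserted properties of
$f(y,z) = 1 + y(2 + \cosh\sqrt{2z}) + z(2 + \cosh\sqrt{2y})$
in turn, using the evident symmetry $f(y,z)=f(z,y)$ at several points. Properties~2 and~3 essentially come for free: writing $\cosh\sqrt{2t}$ as the entire power series $\sum_{k\geq 0}\frac{2^k}{(2k)!}\,t^k$ (which has non-negative coefficients, and which for $t<0$ agrees with $\cos\sqrt{2\ab t}$) and substituting into the defining formula exhibits $f$ as an everywhere-convergent sum of monomials $y^a z^b$ with non-negative coefficients; the constant term is $f(0,0)=1$. So the content lies in the two inequalities, both of which I would reduce, via symmetry and a case split, to elementary one-variable estimates about $\cosh$.

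For the non-positivity (property~1), by symmetry it suffices to show $f(y,z)\leq 0$ whenever $y\leq -1$, for \emph{every} $z\in\R$. I would break into three cases. If $z\leq 0$, then $2+\cosh\sqrt{2z}=2+\cos\sqrt{2\ab z}\geq 1>0$ forces $y(2+\cosh\sqrt{2z})\leq -(2+\cos\sqrt{2\ab z})$, while $z(2+\cosh\sqrt{2y})\leq 0$; summing gives $f(y,z)\leq -1-\cos\sqrt{2\ab z}\leq 0$. If $z>0$ and $y\leq -10$, then from $\cosh\sqrt{2z}\geq 1+z$ and $2+\cosh\sqrt{2y}=2+\cos\sqrt{2\ab y}\leq 3$ one gets $f(y,z)\leq 1-10(3+z)+3z<0$. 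The remaining regime $-10\leq y\leq -1$, $z>0$ is the delicate one: on the compact range $\sqrt{2\ab y}\in[\sqrt 2,\sqrt{20}]$ one checks that $\cos\sqrt{2\ab y}$ never exceeds $\cos\sqrt 2<1/5$ (since $\cos$ is decreasing on $[\sqrt 2,\pi]$ and negative on $[\pi,\sqrt{20}]$), so $z(2+\cosh\sqrt{2y})<\tfrac{11}{5}z$ and hence $f(y,z)<-1-\cosh\sqrt{2z}+\tfrac{11}{5}z$. The claim then reduces to $\cosh\sqrt{2z}+1\geq\tfrac{11}{5}z$ for all $z\geq 0$, equivalently $\cosh u+1\geq\tfrac{11}{10}u^2$ for all $u\geq 0$ after setting $u=\sqrt{2z}$; this is proved by retaining enough terms of $\cosh u+1=2+\tfrac{u^2}{2}+\tfrac{u^4}{24}+\cdots$.

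For the growth bound (property~4), assume $y\leq z$, so $\max\{y,z\}=z\geq y\geq -1$ and we want $f(y,z)\leq e^{3\sqrt{z+1}}$. If $-1\leq y<0$, then $\cosh\sqrt{2y}=\cos\sqrt{2\ab y}\leq 1$ and $y(2+\cosh\sqrt{2z})<0$, so $f(y,z)\leq 1+3z$, and one checks $1+3z\leq e^{3\sqrt{z+1}}$ for all $z\geq -1$ (for $z\geq 0$, put $t=\sqrt{z+1}\geq 1$ and use $e^{3t}\geq\tfrac{(3t)^2}{2}\geq 3t^2-2$). If $0\leq y\leq z$, then $y\leq z$ together with $\cosh\sqrt{2y}\leq\cosh\sqrt{2z}$ gives $f(y,z)\leq 1+2z(2+\cosh\sqrt{2z})$; bounding $1\leq\cosh\sqrt{2z}$, $z\leq\cosh\sqrt{2z}$, and $\cosh\sqrt{2z}\leq e^{\sqrt{2z}}$ yields $f(y,z)\leq (5+2z)e^{\sqrt{2z}}$, and the claim reduces to $\ln(5+2z)+\sqrt{2z}\leq 3\sqrt{z+1}$ for all $z\geq 0$. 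This last inequality is routine: the difference is positive at $z=0$, grows without bound, and a short estimate confines the only possibly-problematic values of $z$ to a bounded interval, on which positivity is checked directly.

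The main obstacle is the delicate sub-case of property~1, i.e.\ $y$ just below $-1$ with $z$ of moderate size. There $f(-1,z)$ is genuinely close to $0$ (its supremum over $z>0$ is only about $-0.3$), and correspondingly the reduced inequality $\cosh u+1\geq\tfrac{11}{10}u^2$ is nearly tight: it does \emph{not} follow from the first two or three terms of the series for $\cosh$, so one must keep the expansion far enough to cover the critical window $u^2\approx 5$. Everything else, including the endpoint inequality in property~4, has comfortable slack.
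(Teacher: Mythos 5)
The paper does not actually give a proof of this lemma---it writes that ``whose proof is a simple calculus exercise that we omit''---so there is nothing in the source to compare against; I can only assess the proof on its own terms. Your strategy is a natural one (use the power-series formula for $\cosh\sqrt{2t}$ for properties 2--3, symmetrize, and split into cases for the two inequalities, reducing each to a one-variable estimate), and after checking the arithmetic I believe it goes through.

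One small gap to flag in the growth bound (property 4), sub-case $-1\leq y<0$. You conclude ``$f(y,z)\leq 1+3z$'' from $\cosh\sqrt{2y}\leq 1$ (hence $2+\cosh\sqrt{2y}\leq 3$) and $y(2+\cosh\sqrt{2z})<0$. But the step $z(2+\cosh\sqrt{2y})\leq 3z$ uses $z\geq 0$; if $-1\leq z<0$ the inequality reverses, since you are multiplying an upper bound on the positive factor by a negative $z$. The fix is trivial---when $z<0$ both non-constant terms of $f$ are negative, so $f<1\leq e^{3\sqrt{z+1}}$ immediately---but as written the case split is incomplete. (One can also show $f\leq 1+3z$ directly for $-1\leq y<0$, $z<0$, via $f-(1+3z)=y(2+\cosh\sqrt{2z})+z(\cosh\sqrt{2y}-1)\leq y+y z=y(1+z)\leq 0$, using $1-\cos u\leq u^2/2$; but the trivial observation suffices.) Everything else checks out, in particular your reduction of the hardest sub-case of non-positivity to $\cosh u+1\geq \tfrac{11}{10}u^2$, which as you rightly note requires carrying the $u^6/720$ term of the series: the three-term truncation $2+\tfrac{u^2}{2}+\tfrac{u^4}{24}$ dips to about $-0.16$ near $u^2\approx 7.2$, while adding $\tfrac{u^6}{720}$ restores positivity with modest margin.
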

With this, we have all of the ingredients in place to prove \cref{lem:geometric}, following the strategy outlined above.
\begin{proof}[Proof of \cref{lem:geometric}]
	We define
	\begin{equation*}
		\bm V \coloneqq f(\inn{\sigma_Y(\bm v), \sigma_Y(\bm w)}, \inn{\sigma_Z(\bm v), \sigma_Z(\bm w)})
	\end{equation*}
	as in \eqref{eq:V def}, where $f: \R^2 \to \R$ is given by \eqref{eq:f def}. By \cref{lem:moments positive}, the fact that $f$ is analytic with non-negative Taylor coefficients, and the fact that $f(0,0)=1$, we conclude that $\E[\bm V] \geq 1$. 

	Let $\cE$ be the event defined in \eqref{eq:E def}, and let $\overline \cE$ be its complement.
	The fact that $f$ is non-positive on the set $\{(y,z) : y \leq -1 \text{ or } z \leq -1\}$ implies that on the event $\overline \cE$, the random variable $\bm V$ is non-positive. Therefore $\E[\bm V \bm 1_{\overline{\cE}}] \leq 0$, and hence
	\[
		1 \leq \E[\bm V] = \E[\bm V \bm 1_{\cE}] + \E[\bm V \bm 1_{\overline{\cE}}] \leq \E[\bm V \bm 1_{\cE}].
	\]
	By \cref{obs:fubini}, we now have
	\begin{align*}
		1 \leq \E[\bm V\bm 1_{\cE}] &\leq \int_0^\infty \pr(\bm V\bm 1_{\cE} \geq \lambda)\dd \lambda=\int_0^\infty \pr(\{\bm V \geq \lambda\} \cap \cE)\dd \lambda.
	\end{align*}
	On the interval $[0,1]$, the integrand is at most $\pr(\cE)$, so
	\[
		1 \leq  \pr(\cE) + \int_1^\infty \pr(\{\bm V \geq \lambda\} \cap \cE) \dd \lambda.
	\]
	We now recall \eqref{eq:f UB}, which implies that on the event $\cE$, we have $\bm V \leq e^{3\sqrt{\bm M+1}}$, hence
	\[
		1- \pr(\cE) \leq \int_1^\infty \pr(\{\bm V \geq \lambda\} \cap \cE) \dd \lambda \leq \int_1^\infty \pr\left(\left\{e^{3\sqrt{\bm M+1}} \geq \lambda\right\} \cap \cE\right) \dd \lambda.
	\]
	We now change variables to $\lambda = e^{3\kappa}$, so that $e^{3\sqrt{\bm M+1}}\geq \lambda$ if and only if $\bm M \geq \kappa^2-1$, and conclude that
	\begin{equation}\label{eq:kappa integral}
		1 - \pr(\cE) \leq \int_{0}^\infty \pr(\{\bm M \geq \kappa^2-1\} \cap \cE) \cdot 3e^{3\kappa} \dd \kappa.
	\end{equation}
	We now claim that for some $\kappa \geq 0$, we have
	\begin{equation}\label{eq:prob LB}
		\pr(\{\bm M \geq \kappa^2-1\}\cap \cE) \geq \frac{e^{-4\kappa}}{4}.
	\end{equation}
	Indeed, if this does not hold, then continuing \eqref{eq:kappa integral}, we conclude that
	\[
		1-\pr(\cE) \leq \int_0^\infty \frac{e^{-4\kappa}}{4}\cdot 3e^{3\kappa} \dd \kappa = \frac 34 \int_0^\infty e^{-\kappa}\dd \kappa = \frac 34,
	\]
	hence $\pr(\cE) \geq \frac 14$. But on the event $\cE$ we have $\bm M \geq -1$, hence \eqref{eq:prob LB} holds with $\kappa=0$, a contradiction. We conclude that, as claimed, \eqref{eq:prob LB} holds for some $\kappa \geq 0$.

	Fixing such a $\kappa$, recalling the definitions of $\bm M$ and $\cE$ from \eqref{eq:M def} and \eqref{eq:E def}, and applying the union bound, we find that
	\begin{align*}
		\frac{e^{-4\kappa}}4 &\leq \pr(\{\bm M \geq \kappa^2-1\}\cap \cE)\\
		&=\pr\big(\max \{\inn{\sigma_Y(\bm v), \sigma_Y(\bm w)}, \inn{\sigma_Z(\bm v), \sigma_Z(\bm w)}\} \geq \kappa^2-1 \text{ and }\\
		&\hspace{3cm}\inn{\sigma_Y(\bm v), \sigma_Y(\bm w)}\geq -1 \text{ and }\inn{\sigma_Z(\bm v), \sigma_Z(\bm w)}\geq -1\big)\\
		&\leq \pr\big(\inn{\sigma_Y(\bm v), \sigma_Y(\bm w)} \geq \kappa^2-1 \text{ and } \inn{\sigma_Z(\bm v), \sigma_Z(\bm w)} \geq -1\big)\\
		&\hspace{3cm}+\pr\big(\inn{\sigma_Z(\bm v), \sigma_Z(\bm w)} \geq \kappa^2-1 \text{ and } \inn{\sigma_Y(\bm v), \sigma_Y(\bm w)} \geq -1\big).
	\end{align*}
	Therefore, either
	\[
		\pr\big(\inn{\sigma_Y(\bm v), \sigma_Y(\bm w)} \geq \kappa^2-1 \text{ and } \inn{\sigma_Z(\bm v), \sigma_Z(\bm w)} \geq -1\big) \geq \frac{e^{-4\kappa}}{8}
	\]
	or the same holds upon reversing the roles of $Y$ and $Z$. This concludes the proof of \cref{lem:geometric} upon recalling that $c=1/8$ and $2^{-C} = 1/64 \leq e^{-4}$.
\end{proof}

\section{Epilogue: Ramsey's original proof of Theorem \ref{thm:ramsey}}\label{sec:epilogue}
As mentioned in \cref{sec:intro}, there is no known proof of \cref{thm:ramsey} that does not use book graphs in some way. As a hopefully fitting end to this expos\'e, let us see the original proof of \textcite{MR1576401}, which uses book graphs in a rather different way from \cref{lem:book to clique}, yet whose proof shares certain ideas with the ones we have already seen.

Let us denote by $r(B_{t,m})$ the least integer $N$ such that every two-coloring of $E(K_N)$ contains a monochromatic copy of $B_{t,m}$. \textcite{MR1576401} proved the following upper bound on $r(B_{t,m})$.
\begin{theorem}[\cite{MR1576401}]\label{thm:ramsey book}
	$r(B_{t,m}) \leq (t+1)!\cdot m$ for all $t,m \geq 1$.
\end{theorem}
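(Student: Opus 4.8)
The plan is to mimic the Erd\H os--Szekeres proof of \cref{thm:ES binomial}: pass to an off-diagonal version of the book Ramsey number and establish a Pascal-type recursion. For integers $s,t\geq 0$, let $r(B_{s,m},B_{t,m})$ be the least $N$ such that every red/blue coloring of $E(K_N)$ contains a red $B_{s,m}$ or a blue $B_{t,m}$; I adopt the convention (consistent with the definition of $B_{t,m}$) that $B_{0,m}$ is the edgeless graph on $m$ vertices, so that ``$m$ vertices spanning a red $B_{0,m}$'' is a vacuous requirement. Since $r(B_{t,m})=r(B_{t,m},B_{t,m})$, it suffices to bound this off-diagonal quantity.

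First I would record the base cases: since $B_{0,m}$ and $B_{t,m}$ each have at least $m$ vertices, a coloring of $E(K_{m-1})$ contains neither, while any $m$ vertices of any coloring span a (red, say) $B_{0,m}$; hence $r(B_{0,m},B_{t,m})=r(B_{s,m},B_{0,m})=m$. Next comes the key inequality
\[
	r(B_{s,m},B_{t,m}) \leq r(B_{s-1,m},B_{t,m}) + r(B_{s,m},B_{t-1,m}) \qquad (s,t\geq 1),
\]
which is the one-vertex incarnation of the argument behind \cref{lem:book to clique}. Fix a coloring of $E(K_N)$ with $N$ equal to the right-hand side and pick any vertex $v$; writing $R=N_R(v)$ and $\overline R=N_B(v)$ we have $\ab R+\ab{\overline R}=N-1$, so $\ab R\geq r(B_{s-1,m},B_{t,m})$ or $\ab{\overline R}\geq r(B_{s,m},B_{t-1,m})$. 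In the first case the coloring restricted to $R$ yields either a blue $B_{t,m}$ (and we are done) or a red $B_{s-1,m}$ with spine $A'$ and pages $Y'$; since all edges from $v$ into $R$ are red, $A'\cup\{v\}$ is a red $K_s$ that is red-complete to $Y'$, so $(A'\cup\{v\},Y')$ is a red $B_{s,m}$. The second case is identical with the colors exchanged.

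Iterating the recursion from the base cases, a routine induction using Pascal's identity (exactly as in \cref{thm:ES binomial}) gives $r(B_{s,m},B_{t,m})\leq m\binom{s+t}{s}$, and in particular $r(B_{t,m})\leq m\binom{2t}{t}$. It then remains to verify the elementary estimate $\binom{2t}{t}\leq (t+1)!$ for all $t\geq 1$: it holds with equality for $t=1,2$, and in general by induction, since the ratio $\binom{2t+2}{t+1}/\binom{2t}{t}$ equals $(4t+2)/(t+1)$, which is at most $t+2=(t+2)!/(t+1)!$ because $t\leq t^2$. Combining, $r(B_{t,m})\leq (t+1)!\,m$; taking $m=1$ recovers Ramsey's original bound $r(k)=r(B_{k-1,1})\leq k!$.

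The only genuine idea here is the first one: a naive induction on $t$ alone breaks down, because a red step that uncovers a \emph{blue} $B_{t-1,m}$ inside $N_R(v)$ gives no handle on a monochromatic book of either color, so the induction cannot be closed. Passing to the off-diagonal book Ramsey number $r(B_{s,m},B_{t,m})$ repairs this --- just as off-diagonal Ramsey numbers do in the Erd\H os--Szekeres argument --- and everything afterwards (the recursion, the binomial bound, and the comparison with the factorial) is bookkeeping.
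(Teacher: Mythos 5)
Your proof is correct, and it takes a genuinely different route from the paper's. The paper argues by induction on $t$ alone: it invokes the inductive hypothesis to extract a red $B_{t-1,(t+1)m}$ with spine $A$ and pages $X$, and then either performs a single red step (if some $v\in X$ has $\geq m$ red neighbours in $X$) or, when that fails, builds a blue $B_{t,m}$ from scratch inside $X$ by a greedy process, peeling off vertices $v_1,\dots,v_t$ one at a time and shrinking $X$ to a common blue neighbourhood. Your proof instead symmetrizes the problem from the outset, introducing the off-diagonal book Ramsey number $r(B_{s,m},B_{t,m})$, proving the Pascal-type recursion $r(B_{s,m},B_{t,m})\leq r(B_{s-1,m},B_{t,m})+r(B_{s,m},B_{t-1,m})$ exactly as in \cref{thm:ES binomial}, and closing the induction with binomial coefficients. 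The trade-offs are instructive. The paper's proof is more direct and self-contained (no auxiliary off-diagonal quantity), which is presumably closer in spirit to Ramsey's original argument and motivates the ``algorithmic'' viewpoint of the expos\'e. Your proof, by contrast, yields the substantially stronger bound $r(B_{t,m})\leq m\binom{2t}{t}$, which is strictly better than $(t+1)!\cdot m$ for every $t\geq 3$ and is of the form $O(4^t m)$; this essentially recovers the Erd\H os--Faudree--Rousseau--Schelp/Thomason upper bound mentioned in \cref{sec:epilogue} as a by-product. Your final comparison $\binom{2t}{t}\leq(t+1)!$ is then a small afterthought needed only to match the stated form of the theorem.
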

Note that since $K_k = B_{k-1,1}$, this immediately implies the bound $r(k) \leq k!$, and hence yields a proof of \cref{thm:ramsey}.
\begin{proof}[Proof of \cref{thm:ramsey book}]
	We proceed by induction on $t$. For the base case $t=1$, we wish to prove that $r(B_{1,m})\leq 2m$, which is immediate: in any two-coloring of $E(K_{2m})$, any vertex is incident to $2m-1$ edges, at least $m$ of which must have the same color by the pigeonhole principle. This yields a monochromatic copy of $B_{1,m}$.

	For the inductive step, suppose the result has been proved for $t-1$, and fix a coloring of $E(K_N)$ where $N = (t+1)!\cdot m = t!\cdot ((t+1)m)$. By the inductive hypothesis, this coloring contains a monochromatic copy of $B_{t-1,(t+1)m}$, which we may assume to be red without loss of generality. That is, there exist disjoint sets $A,X\subseteq V(K_N)$ with $\ab A = t-1$ and $\ab X = (t+1)m$, such that all edges inside $A$ and between $A$ and $X$ are red. If there is a vertex $v \in X$ with at least $m$ red neighbors in $X$, we may perform a ``red step'' by updating $A \to A \cup \{v\}$ and $X \to X \cap N_R(v)$, yielding a red $B_{t,m}$. So we may assume that every vertex in $X$ has at most $m-1$ red neighbors in $X$. Let $v_1$ be an arbitrary vertex of $X$, and let $X_1=N_B(v_1) \cap X$, so that $\ab{X_1} \geq \ab X-m$. Let $v_2$ be an arbitrary vertex of $X_1$, and let $X_2 = N_B(v_2) \cap X_1$, so that $\ab{X_2} \geq \ab{X_1}-m \geq \ab X - 2m$. Continuing in this way, we may build a sequence of vertices $v_1,\dots,v_t$ as well as a set $X_t$ with $\ab {X_t}\geq \ab X - tm = m$, such that each $v_i$ is adjacent in blue to all $v_j$ with $j>i$, as well as to all vertices in $X_t$. But this precisely means that we have constructed a blue $B_{t,m}$, completing the inductive step.
\end{proof}
Given \cref{thm:ramsey book}, it is natural to wonder what the true value of $r(B_{t,m})$ is. This question was first explicitly raised by \textcite{MR479691,MR679211}, who independently proved the bounds $(2^t-o(1))m\leq r(B_{t,m})\leq 4^t m$. Thomason in particular was motivated by \cref{lem:book to clique}, as discussed in \cref{sec:enter book}, and made the following bold conjecture.
\begin{conjecture}[\cite{MR679211}]\label{conj:thomason book}
	$r(B_{t,m}) \leq 2^t(m+t-2)+2$ for all $m,t \geq 1$.
\end{conjecture}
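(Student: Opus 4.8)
Since \cref{conj:thomason book} remains open, what follows is a strategy rather than a proof: I sketch the natural approach, indicate where it already succeeds, and isolate the step that makes the general case hard.

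The first instinct is induction on $t$. The base case $t=1$ is exactly the pigeonhole argument from the proof of \cref{thm:ramsey book}: in $K_{2m}$ every vertex is incident to at least $m$ edges of one color, giving a monochromatic $B_{1,m}$, and indeed $2(m-1)+2=2m$. For the inductive step one picks a vertex $v$, passes to its majority-color neighborhood $X$ (red, say, with $\ab X\geq\lceil(N-1)/2\rceil$), and applies the inductive hypothesis inside $X$ to find a monochromatic $B_{t-1,m}$; if this book is red we adjoin $v$ and win. The obstruction is immediate and is essentially the whole difficulty: if the book found inside $X$ is \emph{blue} we have only a blue $B_{t-1,m}$, which $v$ cannot extend. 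A color-blind induction therefore pays a factor of $2$ per step in \emph{both} colors and yields only the Erd\H os--Szekeres-type bound $r(B_{t,m})\leq 4^tm$ of \textcite{MR479691,MR679211}, not the conjectured $2^tm$.

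To recover the constant $2^t$ in place of $4^t$ one must play off quasirandomness against structure. Fix a small $c>0$ and a density scale $\delta N$. Given a two-coloring of $E(K_N)$, either (i) there is a set $U$ with $\ab U\geq\delta N$ on which some color---say red---has density at least $\frac12+c$, or (ii) no such set exists, so the red graph is quasirandom of density near $\frac12$ down to scale $\delta N$. In case (i) the density surplus should make red steps cheap: running a biased book-building walk in the spirit of \cref{alg:off diag book} (cf.\ the off-diagonal analysis behind \cref{thm:off diagonal exp}), each red step shrinks the page-set by roughly its red density $\geq\frac12+c$ rather than by $\frac12$, so building a red spine of size $t$ costs only a factor $(\frac12+c)^{-t}<2^t$ in the pages, with room to spare. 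In case (ii) one selects the spine probabilistically: a uniformly random $t$-subset $S$ of $V(K_N)$ induces a red $K_t$ with probability about $2^{-\binom t2}$, and conditioned on that event quasirandomness forces $S$ to have roughly $2^{-t}N$ common red neighbors; choosing the parameters so that $2^{-t}N\geq m+t-2$ after the lower-order errors are absorbed yields the desired red $B_{t,m}$. Together these give $r(B_{t,m})\leq 2^tm(1+o(1))$, matching the conjecture up to lower-order terms.

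The main obstacle is quantitative rather than structural, and is exactly why the conjecture is not a theorem. Both halves of the dichotomy naturally deliver a bound of shape $2^tm(1+o(1))$, whereas \cref{conj:thomason book} asserts the \emph{exact} value $2^t(m+t-2)+2$; moreover the errors are worst precisely when $m$ is small relative to $t$, which is the regime in which the additive correction $2^t(t-2)+2$ is the entire content. Making the quasirandom case exact demands either a very sharp counting lemma for books in pseudorandom graphs or a cleverer, non-random choice of spine, while the structured case requires a delicate---and probably case-heavy---accounting of the biased walk (the choice of $v$, the cleaning steps, and so on) so that no vertices are wasted. For $t$ bounded and $m$ large these difficulties have been overcome and the asymptotic form of \cref{conj:thomason book} is known; the full conjecture remains open.
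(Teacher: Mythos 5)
You have correctly identified that \cref{conj:thomason book} is not a theorem but an open conjecture, and the paper contains no proof of it to compare against: the surrounding text states that it is known to hold (sharply) for $t\in\{1,2\}$, is wide open for $t\geq 3$ and may well be false, and that even the $m=1$ case would give $r(k)\leq 2^{k+o(k)}$, far beyond current technology. Your sketch is a fair account of what \emph{is} known. The dichotomy you describe---a density surplus on some large set in one color, versus two-sided quasirandomness at density near $\tfrac12$, with a biased book-building walk handling the first case and a counting argument in the pseudorandom graph handling the second---is in spirit the strategy behind Conlon's theorem (\cite{MR4115773}) that $r(B_{t,m})=(2^t+o(1))m$ for fixed $t$, which the paper cites immediately after the conjecture. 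The one technical ingredient you leave implicit is the main tool that makes case (ii) work: Conlon's argument runs through a (weak) regularity lemma and a book-counting lemma in regular pairs, rather than a direct second-moment computation over random $t$-subsets, and this is where all the work is. You also correctly locate the gap between the asymptotic result and the conjecture: the claimed bound $2^t(m+t-2)+2$ is exact, with the additive correction $2^t(t-2)+2$ carrying essentially all of the content once $m$ is bounded in terms of $t$ (in particular the $m=1$ case, which is the diagonal Ramsey problem in disguise), and no known technique controls these lower-order errors. In short: your answer is accurate as an account of the state of the art; it is not, and does not claim to be, a proof, and neither the paper nor the literature contains one.
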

This conjecture is known to be true (and optimal) for $t \in \{1,2\}$, but it is wide open for $t \geq 3$ (and may well be false). Moreover, \cref{conj:thomason book} is likely to be very difficult: even the $m=1$ case would yield $r(k) \leq 2^{k+o(k)}$, a bound far stronger than anything currently known. However, a beautiful result of \textcite{MR4115773} confirms this conjecture asymptotically for any fixed $t$.
\begin{theorem}[\cite{MR4115773}]
	$r(B_{t,m})=(2^t+o(1))m$ as $m \to \infty$, for any fixed $t$.
\end{theorem}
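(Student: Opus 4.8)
The lower bound $r(B_{t,m}) \geq (2^t-o(1))m$ is the easy half, and I would obtain it from a random colouring. Fix $\eta>0$ and take a uniformly random red/blue colouring of $E(K_N)$ with $N=(1-\eta)2^t m$. For a fixed $t$-set $A$ and a fixed colour, conditioned on $A$ spanning a monochromatic clique of that colour, every vertex outside $A$ is a page with probability exactly $2^{-t}$, independently; so the number of pages of $A$ in its colour is a binomial random variable with mean $2^{-t}(N-t)=(1-o(1))(1-\eta)m$, and a Chernoff bound shows it exceeds $m$ with probability at most $e^{-c\eta^2 m}$. A union bound over the at most $2\binom Nt\leq (2^{t+1}m)^t$ choices of colour and spine then shows that, with probability tending to $1$ as $m\to\infty$, no monochromatic $K_t$ has $m$ common neighbours in its own colour. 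Hence $r(B_{t,m})>(1-\eta)2^t m$ for every fixed $\eta>0$.

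For the upper bound --- the real content --- fix $\delta>0$, set $N=(1+\delta)2^t m$, suppose a two-colouring of $E(K_N)$ has no monochromatic $B_{t,m}$, and aim for a contradiction once $m$ is large (with $t,\delta$ fixed). The organising principle is that $t$ is an \emph{absolute constant}: any exploration that enlarges a monochromatic clique one vertex at a time runs for at most $t$ steps, so a multiplicative loss of $1+o(1)$ per step compounds to only $1+o(1)$. The clean case is when the colouring behaves quasirandomly. Running the Erd\H os--Szekeres-type greedy of \cref{alg:ES} in the majority colour, say red (density $\geq\tfrac12$): put $W_0=V(K_N)$ and at each step pick $v_{i+1}\in W_i$ whose red-degree inside $W_i$ is at least $(\text{red density of }W_i)\cdot\ab{W_i}$, then set $W_{i+1}=W_i\cap N_R(v_{i+1})$. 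If the red density of every $W_i$ stays above $\tfrac12-\varepsilon$ for a suitably small $\varepsilon=\varepsilon(\delta,t)$, then $v_1,\dots,v_t$ span a red $K_t$ all of whose vertices are red-joined to every vertex of $W_t$, and $\ab{W_t}\geq(\tfrac12-\varepsilon)^t N\geq m$, yielding a red $B_{t,m}$ and a contradiction. By symmetry the same works in blue, so we may assume we are in the complementary situation: for whichever colour we try, some run of the greedy stalls at a step $j\leq t-1$ with a working set $W$ of size $\ab W\geq(\tfrac12-\varepsilon)^{t-1}N$ whose density in that colour is below $\tfrac12-\varepsilon$ --- equivalently, whose density in the other colour is at least $\tfrac12+\varepsilon$.

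The hard part is to extract a monochromatic $B_{t,m}$ from such a stalled configuration while losing only a $1+o(1)$ factor. Naively restarting the greedy inside $W$ in the surplus colour is not enough: $W$ has size only about $2m$, too small to house a $B_{t,m}$ unless its density in that colour is much larger than $\tfrac12+\varepsilon$, and one risks oscillating between the colours and thereby only recovering the elementary bound $r(B_{t,m})\leq 4^t m$ quoted above. Overcoming this is precisely the work carried out in the proof of \textcite{MR4115773}, which, roughly speaking, keeps page sets in \emph{both} colours throughout the exploration --- in the spirit of the symmetric book algorithm \cref{alg:symmetric} --- and runs a quasirandomness/regularity dichotomy: wherever the current working set is $\varepsilon$-regular of the expected density, the greedy above produces the needed $(1+o(1))2^{-t}\ab W$ common neighbours in one colour with no oscillation, and wherever it is irregular one passes to a large subset carrying a genuine density bias in one colour, which exactly compensates for the vertices lost and can only be done $O_t(1)$ times before the process must succeed. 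For the degenerate configurations one also needs a stability/removal-lemma input: if the red graph is (nearly) $K_{j+1}$-free for the relevant $j\leq t-1$, it is close to $j$-partite, forcing the blue graph to contain $j$ near-cliques each of size $\approx N/j\geq m+t$, from which a blue $B_{t,m}$ is immediate. Threading the constants through this dichotomy so that the total loss stays $1+o(1)$, rather than degrading to a fixed constant, is the main obstacle; all the other ingredients --- the random colouring, the greedy of \cref{alg:ES}, and the elementary bookkeeping with books --- are routine.
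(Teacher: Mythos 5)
The paper does not actually prove this theorem; it is stated as a citation to \textcite{MR4115773}, and the surrounding text merely remarks that Conlon's proof uses sophisticated tools such as Szemer\'edi's regularity lemma and is connected to the Erd\H os--Hajnal-type work of \textcite{MR3664811}. So there is no ``paper proof'' to compare your proposal against; I can only assess it on its own terms.

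Your lower bound argument is correct and is the standard one: a uniformly random colouring of $K_N$ with $N=(1-\eta)2^t m$, a Chernoff bound showing that a fixed $t$-set has at least $m$ monochromatic common neighbours with probability $e^{-\Omega(\eta^2 m)}$, and a union bound over the $O(N^t)=m^{O_t(1)}$ choices of spine and colour. One small point: the conditioning you invoke is harmless but unnecessary, since the edges inside $A$ are independent of the edges from $A$ to $V\setminus A$; you can simply bound the probability that $A$ spans a monochromatic clique \emph{and} has $\geq m$ pages in that colour by $2^{-\binom t2}e^{-c\eta^2 m}$ directly. Either way, the lower bound is fine.

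The upper bound, however, is not proved. You correctly identify what the ``clean'' case looks like --- if the Erd\H os--Szekeres-type greedy never sees the density in the majority colour drop below $\tfrac12-\varepsilon$, then $(\tfrac12-\varepsilon)^t(1+\delta)2^t m\geq m$ for suitable $\varepsilon(\delta,t)$, and one wins directly --- and you correctly identify that the whole difficulty is the stalled case, where one risks oscillating between the colours and only recovering $r(B_{t,m})\leq 4^t m$. But at that exact point you write that overcoming this ``is precisely the work carried out in the proof of \textcite{MR4115773}'', followed by an impressionistic paragraph about a regularity/quasirandomness dichotomy and a stability step. That is a description of a programme, not an argument: you give no mechanism by which the density surplus gained at a stall actually compensates, quantitatively, for the vertices lost; no reason why the number of stalls is $O_t(1)$; and the ``degenerate configuration'' aside (invoking near-$j$-partiteness of the red graph inside $W$) is not connected to anything that came before --- in fact, the red greedy already produced a red $K_j$ in $W$, so it is unclear what $K_{j+1}$-freeness you have in mind. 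Since the paper itself gives no proof, I cannot fault you for not reproducing Conlon's argument from scratch; but you should be clear that what you have written establishes only the lower bound, plus a correct description of \emph{what} must be overcome in the upper bound, and not the upper bound itself.
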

Conlon's result addresses a question that arguably goes back 90 years to the original work of \textcite{MR1576401}, yet uses highly sophisticated tools developed in the interim, such as the regularity lemma of \textcite{MR0540024}, and this question is also closely related to a famous conjecture of \textcite{MR0371701}, which was recently resolved by \textcite{MR3664811}. In fact, Ramsey theory has seen a number of recent breakthroughs on old, seemingly intractable problems by the introduction of remarkable new techniques: three other examples from the past two years are the works of \textcite{MR4720317} on explicit constructions, of \textcite{MR4713025} on off-diagonal Ramsey numbers, and of \textcite{2308.15589} on restricted Ramsey graphs. There is every reason to hope and expect this trend to continue.

\printbibliography

\end{document}
